\newtheorem{thm}{Theorem}[section]
\newtheorem{lemma}[thm]{Lemma}
\newtheorem{prop}[thm]{Proposition}
\newtheorem{cor}[thm]{Corollary}
\newtheorem{claim}[thm]{Claim}
\newtheorem{question}[thm]{Question}
\theoremstyle{remark}
\newtheorem{rmk}[thm]{Remark}
\theoremstyle{definition}
\newtheorem{definition}[thm]{Definition}
\newtheorem{example}[thm]{Example}
\numberwithin{equation}{section}
\newcommand{\ben}{\begin{enumerate}}
\newcommand{\een}{\end{enumerate}}
\newcommand{\RR}{\mathbb{R}}
\newcommand{\dist}{\textnormal{dist}}
\newcommand{\diam}{\textnormal{diam }}
\newcommand{\R}{\mathbb{R}}
\begin{document}

\title{Rectifiability of planes and Alberti representations} 
\author{Guy C. David}
\address{Courant Institute of Mathematical Sciences, New York University, New York, NY 10012}
\email{guydavid@math.nyu.edu}

\author{Bruce Kleiner}
\address{Courant Institute of Mathematical Sciences, New York University, New York, NY 10012}
\email{bkleiner@cims.nyu.edu}

\begin{abstract}
We study metric measure spaces that have quantitative topological control, as well as a weak form of differentiable structure. In particular, let $X$ be a pointwise
doubling metric measure space. Let $U$ be a Borel subset on which the blowups of $X$ are topological planes. We show that $U$ can admit at most $2$ independent Alberti representations. Furthermore, if $U$ admits $2$ Alberti representations, then the restriction of the measure to $U$ is $2$-rectifiable. This is a partial answer to the case $n=2$ of a question of the second author and Schioppa.
\end{abstract}

\thanks{B.K. was supported by a Simons Fellowship, a Simons collaboration
grant,  and NSF grants DMS-1405899, DMS-1406394.}
\date{\today}
\maketitle
\setcounter{tocdepth}{1}
\tableofcontents
\section{Introduction}

In the last two decades, there has been tremendous progress in developing notions of differentiability in the context of metric measure spaces. One strand of this work, beginning with Cheeger \cite{Ch99}, focuses on PI spaces -- metric measure spaces that are doubling and support a Poincar\'e inequality in the sense of \cite{HK98} -- and, more generally, Lipschitz differentiability spaces -- those equipped with a differentiable structure in the form of a certain measurable (co)tangent bundle.
Other strands focus on differentiability via the presence of independent $1$-rectifiable structures, as in the work of Bate-Alberti-Csornyei-Preiss \cite{Al93,ACP05,Ba15}, or the existence of independent derivations mimicking the functional analytic properties of partial derivatives, as in the work of Weaver \cite{We00}.

By now, the different definitions of Cheeger, Weaver, and Bate-Alberti-Csornyei-Preiss have been shown to be closely related \cite{Ba15, Sc16, Sc14}, 
and a variety of applications of differentiability have been given, e.g., in geometric group theory, in embedding theory, and in Sobolev and geometric function theory.

Nonetheless, some issues are still not well understood, in particular the relation between analytical structure and quantitative topology.  A particularly intriguing question along this line is, loosely speaking: What kind of differentiable structure can live on a topological manifold?  To make this precise using a condition from quantitative topology, we impose one of the following on our metric space $X$:
\begin{enumerate}
\renewcommand{\theenumi}{\alph{enumi}}
\item\label{LCcondition} $X$ is linearly contractible, i.e., for some $\lambda>0$, each ball $B(x,r)$ is contractible in the ball $B(x,\lambda r)$. 
\item\label{SScondition} $X$ is a self-similar topological $n$-manifold, or more generally,  all blowups (pointed Gromov-Hausdorff limits of rescalings) of $X$ are topological $n$-manifolds.
\end{enumerate}
Both of these are widely used strengthenings of the qualitative hypothesis that $X$ is a manifold; condition \eqref{LCcondition} was used heavily by Semmes \cite{Se96}.

One may then ask (see \cite{KS15} for discussion of this and other related questions):

\begin{question}
\label{que_manifold_examples}
Let $(X,d,\mu)$ be a PI space 
and assume $X$ is a topological $n$-manifold satisfying \eqref{LCcondition} or \eqref{SScondition} above.  What can be said about the dimension of the differentiable structure,  the Hausdorff dimension, or the structure of blow-ups of $(X,d,\mu)$?
\end{question}

Note that the only known examples of PI spaces as above are sub-Riemannian manifolds or variations on them; in particular, in all known examples, blowups at generic points are bilipschitz homeomorphic to Carnot groups.  

A special case of our main result gives some restrictions in the $2$-dimensional case:

\begin{thm}
\label{thm_intro_special_case}
Suppose $(X,d,\mu)$ is a PI space, and all
blowups of $X$ are homeomorphic to $\R^2$.
Then the tangent bundle has dimension at most $2$, and if $U\subset X$ is a Borel set on which the tangent bundle has dimension $2$, then $\mu|_U$  is $2$-rectifiable.
\end{thm}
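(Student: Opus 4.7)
The plan is to translate the theorem into statements about Alberti representations, using the equivalence (due to Bate, and developed further by Schioppa, cited in the introduction) between the dimension of the Cheeger differentiable structure at a point in a Lipschitz differentiability space and the maximal number of $\mu$-independent Alberti representations supported near that point. Since PI spaces are Lipschitz differentiability spaces by Cheeger's theorem, this reduces Theorem \ref{thm_intro_special_case} to the two assertions of the abstract: (i) if every blowup at points of a Borel set $U$ is homeomorphic to $\R^2$, then $\mu|_U$ admits at most $2$ independent Alberti representations; (ii) if $\mu|_U$ admits $2$ independent Alberti representations and the planar blowup condition holds, then $\mu|_U$ is $2$-rectifiable.

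For the upper bound (i), I would argue by contradiction. Three independent Alberti representations combine to give a Lipschitz map $\phi\colon U\to\R^3$ whose Cheeger differential has rank $3$ at $\mu$-a.e.\ point. Blowing up at a generic density point $x\in U$ yields a pointed metric measure space $(Y,\nu,y)$ with $Y$ homeomorphic to $\R^2$, equipped with $3$ independent Alberti representations inherited from those on $U$. The aim is to extract a topological contradiction: the three transverse families of Lipschitz curves produced by these representations should, at sufficiently fine scales, exhibit something topologically $3$-dimensional inside $Y$ — e.g., a bilipschitz copy of a positive-measure subset of $\R^3$ — which is ruled out by invariance of domain on the topological plane $Y$. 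For (ii), the $2$ independent Alberti representations produce a Lipschitz map $\phi\colon U\to\R^2$ whose differential has rank $2$ a.e. Blowing up at a density point $x\in U$ gives a pointed blowup $(Y,\nu,y)$ with $Y\cong\R^2$, together with a Lipschitz tangent map $\phi_\infty\colon Y\to\R^2$ that is infinitesimally injective at $y$. Invariance of domain on the topological plane $Y$ upgrades infinitesimal to actual local injectivity of $\phi_\infty$, and a Lipschitz-differentiability argument promotes this to a quantitative bilipschitz estimate on some definite scale. Transferring this estimate back to $X$ at the corresponding scale and iterating via a standard Vitali-type exhaustion decomposes $\mu|_U$ into countably many pieces on each of which $\phi$ is bilipschitz, giving $2$-rectifiability.

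The main obstacle is the interface between the qualitative topological hypothesis (blowups homeomorphic to $\R^2$) and the purely analytic Alberti/Cheeger structure. Invariance of domain supplies only local topological injectivity with no metric control, while Alberti representations and Cheeger differentiability supply only infinitesimal linear-algebraic information. The delicate point is fusing these into a uniform, positive-measure bilipschitz conclusion: one needs the scale on which $\phi_\infty$ is bilipschitz on $Y$, and the scale at which the blowup approximates $X$ near $x$, to be compatible well enough that quantitative bilipschitz estimates can be piggybacked onto a covering. The same fusion is what must power the topological obstruction in part (i), where "$3$ independent Alberti representations" has to be converted into a concrete embedding phenomenon inside a space that is only known to be topologically $\R^2$, with no a priori metric regularity.
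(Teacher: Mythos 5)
Your reduction is the paper's: Cheeger's theorem makes $X$ a Lipschitz differentiability space, and Bate's work converts a $2$-dimensional chart into $2$ $\phi$-independent Alberti representations, so the theorem follows from the main result about Alberti representations. The gap is everything after that. The missing idea is Proposition \ref{prop:LQblowup}: at $\mu$-a.e.\ point, \emph{every} blowup $\hat{\phi}$ of $\phi$ is a Lipschitz \emph{quotient} map onto $\RR^n$, i.e.\ satisfies the co-Lipschitz inclusion $B(\hat{\phi}(x),r/L)\subseteq\hat{\phi}(B(x,r))$. This open-mapping half is strictly stronger than ``the Cheeger differential has rank $n$'' or ``$\hat{\phi}$ is infinitesimally injective,'' and it is what powers both parts. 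For the bound $n\le 2$, the paper composes $\hat\phi$ with the projection $\pi:\RR^n\to\RR^2$ to get a Lipschitz quotient map from a doubling topological plane onto $\RR^2$; such maps are discrete by a nontrivial theorem of Bates--Johnson--Lindenstrauss--Preiss--Schechtman (special to planar domains), hence local homeomorphisms off a codimension-$2$ branch set by \v{C}ernavskii--V\"ais\"al\"a, and injectivity of $\pi\circ\hat\phi$ on a ball contradicts $\hat\phi(B)$ containing a ball of $\RR^3$. Your proposed contradiction --- a bilipschitz copy of a positive-measure subset of $\RR^3$ inside $Y$, ``ruled out by invariance of domain'' --- does not go through: invariance of domain says nothing about metric embeddings of nowhere-dense sets, and the measure-theoretic alternative is unavailable because $Y$ is only a \emph{topological} plane with no a priori bound on its Hausdorff dimension (cf.\ Example \ref{example1}, where a topological surface has dimension $3$).

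The same gap undermines part (ii). Invariance of domain does not upgrade infinitesimal injectivity of $\phi_\infty$ to local injectivity; the actual route is again discreteness and openness of the Lipschitz quotient blowup plus \v{C}ernavskii--V\"ais\"al\"a, and the step from ``injective on a ball'' to ``bilipschitz on a smaller ball'' requires the path-lifting property of Lipschitz quotient maps (Lemmas \ref{lem:pathlifting} and \ref{lem:injectivebilip}), not a differentiability argument. More seriously, the transfer back to $X$ is not a standard Vitali exhaustion: Gromov--Hausdorff closeness of $(U,x,\phi)$ to a bilipschitz model at one scale gives no information at smaller scales, so one cannot directly conclude that $\phi$ is bilipschitz on any actual ball of $U$. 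The paper's substitute is a degree-theoretic stability lemma (Lemma \ref{lem:degree1}: if two model maps are GH-close and one is bilipschitz on a ball, the other is bilipschitz on a comparable ball), an iteration showing this persists under magnification (Lemma \ref{lem:iteration}), and a compactness argument producing bilipschitz sub-balls of definite relative size (Proposition \ref{prop:subball}); together these show the non-bilipschitz set is $\sigma$-porous, hence null. You correctly identified this scale-compatibility issue as the delicate point, but it is left entirely unresolved, and it is the technical heart of the proof.
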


We point out that apart from the above theorem, Question \ref{que_manifold_examples} is wide open.  In the $n=2$  case, we do not know if the PI space $X$ could have a $1$-dimensional tangent bundle, while in the $n\geq 3$ case we know of no nontrivial restriction on either the Hausdorff dimension or the dimension of the differentiable structure. For instance, when $n=3$, it is not known if $X$ could be Ahlfors $100$-regular, or if it could have a tangent bundle of dimension $100$ or dimension $1$. 

We now state our main theorem. Before doing so, we remark that it generalizes the special case stated above in a few ways. For one, we do not require that $X$ is a PI space, or even the weaker condition that $X$ is a Lipschitz differentiability space, but just that the measure $\mu$ on $X$ supports at least two independent \textit{Alberti representations}. (Alberti representations are certain decompositions of $\mu$ as superpositions of measures supported on one-dimensional curve fragments, which we define precisely below. They were introduced in metric measure spaces by Bate \cite{Ba15}, building on work of Alberti \cite{Al93} and Alberti-Cs\"ornyei-Preiss \cite{ACP05}.) For another, we do not require that $X$ itself is a topological plane, but only that its blowups are.

\begin{thm}\label{thm:main}
Let $(X,d,\mu)$ be a pointwise doubling metric measure space. Let $U\subset X$ be a Borel subset such that, for $\mu$-a.e. $x\in U$, each blowup of $X$ at $x$ is homeomorphic to $\RR^2$.

If $\mu|_U$ has $n$ $\phi$-independent Alberti representations for some Lipschitz $\phi\colon X\rightarrow\RR^n$, then $n\leq 2$, and equality holds only if $\mu|_U$ is $2$-rectifiable. 
\end{thm}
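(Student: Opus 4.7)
My plan is to argue by a blowup reduction: both halves of the theorem should follow from a rigidity claim for Lipschitz maps on blowups, where the topological plane hypothesis combines cleanly with invariance of domain. Using standard techniques for metric-measure spaces (persistence of Alberti representations under pointed Gromov--Hausdorff limits, together with Lebesgue-point-type density arguments), I would, at $\mu$-a.e. $x \in U$, select a blowup $(X_\infty, \mu_\infty, x_\infty)$ together with a blown-up Lipschitz map $\phi_\infty : X_\infty \to \RR^n$ such that $X_\infty \cong \RR^2$ topologically and $\mu_\infty$ carries $n$ $\phi_\infty$-independent Alberti representations. This reduces the theorem to the corresponding statement for $\phi_\infty$ on the blowup.

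The key rigidity claim I would aim for is that such a $\phi_\infty$ must be a local homeomorphism onto its image near $\mu_\infty$-typical points, which in particular forces $n \leq 2$ by invariance of domain. The heuristic is that $n$ independent Alberti representations endow $\phi_\infty$ with a nondegenerate $n$-dimensional ``Alberti rank'' through $\mu_\infty$-generic points, while the topological $2$-manifold structure of $Y := X_\infty$ caps this rank at $2$. Concretely, I would first argue that $\phi_\infty$ is light (fibers are totally disconnected at $\mu_\infty$-typical points) by showing that an Alberti curve along which $\phi_\infty$ has speed bounded below can meet any given fiber in at most a null set; combined with local connectivity of $Y$ and the abundance of such curves through generic points, nontrivial fiber components are excluded. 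Then I would invoke a Stoilow--Whyburn-type principle for light Lipschitz maps on topological $2$-manifolds to conclude openness, and apply invariance of domain to the open map $\phi_\infty: Y \to \phi_\infty(Y) \subset \RR^n$ to obtain $n \leq 2$.

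For the rectifiability conclusion when $n = 2$, I would upgrade local homeomorphism to a quantitative bi-Lipschitz estimate using the Alberti data. By joining two nearby $\mu_\infty$-typical points $y_1, y_2$ with a short concatenation of fragments from the two Alberti representations (along each of which $\phi_\infty$ has speed bounded below), one obtains a lower bound of the form $d(y_1,y_2) \leq C|\phi_\infty(y_1) - \phi_\infty(y_2)|$. Hence $\phi_\infty$ is bi-Lipschitz on compact subsets of full $\mu_\infty$-measure. A density and Vitali-type argument transfers the conclusion back to $X$: $U$ admits a countable Borel partition $U = \bigcup_i E_i$ on which $\phi|_{E_i}$ is bi-Lipschitz into $\RR^2$, whence $\mu|_U$ is $2$-rectifiable.

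The main obstacle I anticipate is the lightness/openness step. Alberti representations carry only weak, measure-theoretic infinitesimal information, while lightness and openness are pointwise topological statements about $\phi_\infty$. Bridging the gap requires careful curve-fragment arguments that exploit local connectivity of $X_\infty \cong \RR^2$ together with quantitative speed bounds from the Alberti data. It is here that the hypothesis of planarity at \emph{every} blowup (rather than a generic one) will most likely be essential, in order to rule out exceptional points where the topology of $X_\infty$ could allow nontrivial fiber components to form.
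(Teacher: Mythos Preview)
Your proposal has two genuine gaps, and it misses the structural fact that drives the paper's proof.

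First, the lightness/openness step does not go through as written. Alberti representations yield only $\mu_\infty$-a.e.\ speed bounds along curve \emph{fragments} (bilipschitz images of compact subsets of $\RR$, generally totally disconnected), so the claim ``an Alberti curve meets a given fiber in a null set'' says nothing about the topological structure of fibers at individual points, and Stoilow--Whyburn requires lightness \emph{everywhere}. The paper bypasses this entirely: by Proposition~\ref{prop:LQblowup}, the blowup map $\hat{\phi}$ is not merely a Lipschitz map whose target measure supports Alberti representations, but a \emph{Lipschitz quotient} onto $\RR^n$. Openness is then immediate, and discreteness follows from the planar Lipschitz-quotient argument of \cite{BJLPS99,JLPS00} (Proposition~\ref{prop:LQdiscrete}). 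For $n>2$ one composes with a projection to $\RR^2$ and derives a contradiction from local injectivity. Your proposed route never invokes the LQ property, and without it there is no mechanism to get pointwise topological control from Alberti data.

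Second, and more seriously, the ``transfer back'' you describe as a density/Vitali argument is in fact the core of the paper and is not routine. Knowing that every blowup $\hat{\phi}$ is bilipschitz on a ball does \emph{not} by itself imply that $\phi|_U$ is bilipschitz on any set of positive measure: blowups see only one scale at a time, and bilipschitzness is a two-point condition across all small scales. The paper handles this by (i) a compactness argument showing each model LQ map is bilipschitz on a sub-ball of uniform relative size (Proposition~\ref{prop:subball}); (ii) a degree-theoretic stability lemma (Lemma~\ref{lem:degree1}) showing that if $\phi$ is Gromov--Hausdorff close to a bilipschitz model at one scale, it remains close to a bilipschitz model at the next smaller scale; and (iii) an induction through scales (Claim~\ref{claim:iteration}) that converts this into genuine bilipschitzness of $\phi$ on a ball in $Y$. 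The set of points where this fails is then shown to be porous. None of (i)--(iii) is a Lebesgue density argument, and your outline supplies no substitute for the stability step (ii), which is where the $A$-LC hypothesis on blowups (obtained via the appendix) is actually used. Similarly, your ``concatenation of Alberti fragments'' for the bi-Lipschitz lower bound is unjustified: fragments need not connect two prescribed points, even on the blowup.
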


Recall that a measure $\mu$ on a metric space $X$ is called $m$-rectifiable if there are countably many compact sets $E_i \subset \RR^m$ and Lipschitz mappings $g_i:E_i\rightarrow X$ such that $\mu(X\setminus \cup g_i(E_i)) = 0$.  All the remaining terminology used in Theorem \ref{thm:main} will be defined in Section \ref{sec:prelims}.

In Theorem \ref{thm:main}, 
one may replace the assumption that there exist $n$ $\phi$-independent Alberti representations with the assumption that there exist $n$ linearly independent Weaver derivations. (See \cite{We00} or \cite[Section 13]{He07}, for an explanation of Weaver derivations.) Up to decomposing $U$, these assumptions are equivalent by the work of Schioppa, in particular by Theorem 3.24 and Corollary 3.93 of \cite{Sc16}.

By the work of Bate \cite{Ba15}, Theorem \ref{thm:main} 
also applies if the assumption that there exist $n$ $\phi$-independent Alberti representations is replaced by the assumption that $X$ is a Lipschitz differentiability space and $U$ is an $n$-dimensional chart in $X$. (See \cite{Ba15} and \cite{Ch99} for more about Lipschitz differentiability spaces.)

\begin{rmk}
On its own, $m$-rectifiability of a measure $\mu$, as defined above, does not imply that $\mu$ is absolutely continuous with respect to $m$-dimensional Hausdorff measure (see \cite{GKS10}). Nonetheless, it follows from \cite{ACP05} and \cite[Corollary 6.10]{Ba15} that under the assumption of equality in Theorem \ref{thm:main}, the measure $\mu$ must be absolutely continuous with respect to two-dimensional Hausdorff measure. (That a similar implication works also in higher dimensions follows from the recent work \cite{DR16, DMR16} or, alternatively, from an announced result of Cs\"ornyei-Jones.) Note that in Theorem \ref{thm:main} we do not assume that two-dimensional Hausdorff measure is $\sigma$-finite on $X$, or indeed anything about the Hausdorff dimension of $X$.
\end{rmk}

One specific application of Theorem \ref{thm:main} is when the space $X$ itself is a linearly locally contractible topological surface. This fits neatly into the theme, described above, about the interaction between analysis and (quantitative) topology.

\begin{cor}\label{cor:main}
Let $X$ be a metrically doubling, LLC, topological surface. Let $\mu$ be a pointwise doubling measure on $X$ such that all porous sets in $X$ have $\mu$-measure zero, and let $U$ be a Borel subset of $X$.

If $\mu|_U$ has $n$ $\phi$-independent Alberti representations for some Lipschitz $\phi\colon X\rightarrow\RR^n$, then $n\leq 2$, and equality holds only if $\mu|_U$ is $2$-rectifiable. 
\end{cor}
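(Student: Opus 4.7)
The plan is to reduce Corollary~\ref{cor:main} to Theorem~\ref{thm:main} by verifying its topological hypothesis: that for $\mu$-a.e. $x \in U$, every blowup of $X$ at $x$ is homeomorphic to $\RR^2$. Once this is established, the conclusion is immediate from the main theorem.

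First, I would verify that the quantitative hypotheses on $X$ pass to blowups: any blowup $(Y, y_\infty) = \lim_i (X, x, r_i^{-1}d)$ with $r_i\to 0$ is a complete metric space, metrically doubling with the same constant as $X$, and LLC with the same constants (possibly slightly worse). These are standard facts about pointed Gromov--Hausdorff limits when the quantitative data is uniform in scale.

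Next, since porous sets have $\mu$-measure zero by hypothesis, at $\mu$-a.e. $x \in U$ the space $X$ is non-porous at $x$. At such a point, I would argue that every blowup is a topological $2$-manifold. The intuition is that the LLC condition together with the $2$-manifold structure of $X$ gives uniform quantitative control on the local topology near $x$ (small balls are homeomorphic to discs with comparable topological and metric sizes), while non-porosity at $x$ prevents the blowup from collapsing onto lower-dimensional or disconnected pieces. This step would likely use Moore's theorem on cell-like decompositions combined with invariance of domain, in the spirit of Semmes's work on linearly locally contractible metric manifolds.

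Finally, once a blowup $Y$ is known to be a complete, doubling, LLC topological $2$-manifold, which is also non-compact because the basepoint is non-porous, LLC forces simple connectedness: every loop lies in some ball $B(y,r)$ and thus contracts in the slightly larger ball $B(y,\lambda r)$. By the classification of surfaces, a simply connected non-compact topological $2$-manifold without boundary is homeomorphic to $\RR^2$. The main obstacle is the middle step: converting the quantitative metric hypotheses (LLC, non-porosity, doubling) into the qualitative topological conclusion that the blowup is a $2$-manifold is the substantive content of the reduction, whereas the inheritance of quantitative properties and the final identification $Y \cong \RR^2$ are comparatively routine.
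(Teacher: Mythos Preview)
Your overall strategy matches the paper's: show that blowups are topological planes and invoke Theorem~\ref{thm:main}. You also correctly identify how to show that \emph{metric} blowups of $X$ are planes --- doubling and linear contractibility pass to Gromov--Hausdorff limits, the limit is a homology (hence topological) $2$-manifold, and a simply connected non-compact $2$-manifold is $\RR^2$. This is essentially the content of the proof of Lemma~\ref{lem:LQlimit}, to which the paper simply refers.

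Where your argument goes wrong is the role of the porosity hypothesis. The assertion ``$X$ is non-porous at $x$'' is not meaningful ($X$ is never porous in itself), and neither the manifold structure nor the non-compactness of the metric blowup depends on any porosity condition --- both hold at \emph{every} point of $X$ from the LLC, doubling, and surface assumptions alone. The porosity assumption enters for a different reason: Theorem~\ref{thm:main} is stated for blowups in the pointwise-doubling sense of Remark~\ref{rmk:ptwiseblowup}, namely blowups of the metrically doubling pieces $A_i\subset X$ from Lemma~\ref{lem:ptwisedoubling}, not metric blowups of $X$. These two notions can genuinely differ (see the example in that remark). The hypothesis that porous subsets of $X$ are $\mu$-null is precisely what forces the two notions to agree $\mu$-a.e., so that the plane structure of the metric blowups transfers to the blowups relevant for Theorem~\ref{thm:main}. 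As written, your proposal establishes the desired conclusion for the wrong notion of blowup and never bridges this gap.
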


\vspace{\baselineskip}
\subsection*{Rectifiability and related prior work}
One can view Theorem \ref{thm:main} as providing a sufficient condition for a measure $\mu$ on a metric space $X$ to be rectifiable. For measures defined on subsets of Euclidean space, there is a huge literature of such results, most famously the work of Preiss \cite{Pr87} which applies under density assumptions on $\mu$. There is also a program to understand the more quantitative notion of uniform rectifiability for measures on Euclidean space, which often entails assumptions of quantitative topology \cite{DS93, DS00}.

In the case of abstract metric measure spaces, there are fewer known sufficient conditions for rectifiability. Closer to our present setting, Bate and Li \cite{BL15} also make a connection between the existence of Alberti representations in a metric measure space and the rectifiability of that space. They assume nothing about the topology of $X$ but rather impose density assumptions on the measure $\mu$. More specifically, Theorem 1.2 of \cite{BL15} shows the following: Suppose $(X,d,\mu)$ is a metric measure space such that $\mu$ has positive and finite upper and lower $n$-dimensional densities almost everywhere. Then $\mu$ is $n$-rectifiable if and only if it admits a measurable decomposition into sets $U_i$ such that each $\mu|_{U_i}$ supports $n$ independent Alberti representations.

Theorem \ref{thm:main} above says that, under certain quantitative topological assumptions on a space $(X,d,\mu)$ supporting two independent Alberti representations, one can conclude $2$-rectifiability of the measure $\mu$ without any \textit{a priori} density assumptions and, in particular, without any assumption on the Hausdorff dimension of the space.

With strong assumptions on both the measure $\mu$ and the quantitative topology of $X$, much more can be said. For example, in \cite{Se96}, Semmes shows that an Ahlfors $n$-regular, linearly locally contractible $n$-manifold must be a PI space, and in \cite{GCD16}, it is shown that such manifolds are locally uniformly $n$-rectifiable.

Specializing further to the the $2$-dimensional case, in \cite{BK02} it is shown that one can even achieve global parametrizations: an Ahlfors $2$-regular, linearly locally contractible $2$-sphere $X$ is quasisymmetrically equivalent to the standard $2$-sphere. This result implies both that $X$ is a PI space and that $2$-dimensional Hausdorff measure on $X$ is $2$-rectifiable. Related parametrization results for other topological surfaces appear in \cite{Wi08, Wi10}.

\subsection*{Remarks on the proof of Theorem \ref{thm:main}}
Let us briefly discuss the proof that equality $n=2$ in Theorem \ref{thm:main} implies $2$-rectifiability, which is the more difficult part of the theorem. The proof consists of three main ingredients.

In the first ingredient, we use the fact that generic blowups of $\phi$ are certain ``model mappings'', namely, Lipschitz quotient mappings from doubling, linearly contractible planes to $\RR^2$ (Proposition \ref{prop:LQblowup}). (For the definitions of these terms, see Sections \ref{sec:prelims} and \ref{sec:LQ}.) 

Thus, up to decomposing $U$, we may assume there is a scale $r_0>0$ such that at almost every point in $U$, $\phi$ is uniformly close to these model mappings, in the Gromov-Hausdorff sense, at scales below $r_0$ (Lemma \ref{lem:rescaling}). By decomposing further, we may also assume that the model mappings have uniform constants and that $\mu$ is essentially a doubling measure on $U$, in particular, that porous subsets of $U$ have measure zero.

For the second ingredient, we study the geometry of the model mappings. The study of Lipschitz quotient mappings of the plane initiated in \cite{BJLPS99, JLPS00} already shows that the model mappings are discrete and open, i.e., are branched coverings (Propositions \ref{prop:LQdiscrete} and \ref{prop:vaisala}). More quantitatively, we prove by a compactness argument that each model mapping is bilipschitz on a sub-ball of quantitative size in every ball of its domain (Proposition \ref{prop:subball}).

The final ingredient in the proof is the following stability property of the model mappings: If two model mappings are sufficiently close in the Gromov-Hausdorff sense and one is bilipschitz on a ball of radius $R$, then the other is bi-Lipschitz on a ball of radius $cR$, for some controlled constant $c>0$ (Lemma \ref{lem:degree1}). This means that if $\phi$ is close to a bilipschitz model mapping at some point and scale, then this property persists under further magnification at this point (Lemma \ref{lem:iteration}).

Combining these three ingredients shows that the set of points in $U$ at which $\phi|_U$ is not locally bilipschitz is in fact porous (or rather, $\sigma$-porous) in $U$, and hence has measure zero.

\subsection*{Structure of the paper}
The outline of the paper is as follows. Section \ref{sec:prelims} contains notation and preliminary definitions. Section \ref{sec:LQ} contains the definition and properties of Lipschitz quotient mappings. Section \ref{sec:close} contains some preliminary definitions related to Gromov-Hausdorff convergence of metric spaces, and Sections \ref{sec:topology} and \ref{sec:compactness} contain some further quantitative topological facts about Lipschitz quotient mappings on planes. In Section \ref{sec:proof} we prove Theorem \ref{thm:main} and Corollary \ref{cor:main}, and in Section \ref{sec:examples} we provide some relevant examples where the results fail under relaxed assumptions. Section \ref{sec:appendix} is an appendix which shows that a space whose blowups are all planes actually satisfies a more quantitative condition on its blowups. This used in the proof of Theorem \ref{thm:main}, but its proof is a modification of fairly standard ideas in the literature and so is relegated until the end.

\section{Notation and preliminaries}\label{sec:prelims}
Throughout this paper, we consider only complete, separable metric spaces and locally finite Borel regular measures. 

\subsection{Metric space notions}

We write
$$ B(x,r) = B_X(x,r) = \{y\in X: d(y,x)<r\}$$
for the open ball in a metric space $X$, and we write
$$ \overline{B}(x,r) = \overline{B_X}(x,r) = \{y\in X: d(y,x)\leq r\},$$
which need not be the closure of $B(x,r)$. We also consider closed annuli, which we write as
$$ A(x,r,R) = \{y\in X: r\leq d(y,x)\leq R\} $$

A metric space is called \textit{proper} if $\overline{B}(x,r)$ is compact in $X$ for each $x\in X$ and $r>0$.

Given a metric space $(X,d)$ and $\lambda>0$, we will write $\lambda X$ for the metric space $(X,\lambda d)$.

\begin{definition}
A metric space $X$ is called \textit{metrically doubling} if there is a constant $D\geq 0$ such that each ball in $X$ can be covered by at most $D$ balls of half the radius. If we wish to emphasize the constant $D$, we will call $X$ metrically $D$-doubling.
\end{definition}

If $X$ is complete and metrically doubling, then it is proper.

\begin{definition}
Let $X$ be a metric space and $S\subset X$ a subset. We say that $S$ is \textit{porous} if, for all $x\in S$, there is a constant $\eta>0$ and a sequence $x_n\rightarrow x$ in $X$ such that
$$ B(x_n, \eta d(x_n,x)) \cap S = \emptyset.$$
\end{definition}

We now introduce some terms from quantitative topology, including the term linearly locally contractible used in Corollary \ref{cor:main}.
\begin{definition}\label{def:LLCALC}
Let $X$ be a metric space.
\begin{itemize}
\item We call $X$ \textit{linearly locally contractible (LLC)} if there is a radius $r_0>0$ and a constant $A\geq 1$ such that each metric ball $B(x,r)\subset X$ with $r<r_0$ is contractible inside $B(x,Ar)$. If one can take $r_0=\infty$, we call the space \textit{linearly contractible (LC)}, or $A$-LC if we wish to emphasize the constant.

\item We call $X$ \textit{annularly linearly connected} if there is a constant $\lambda\geq 1$ such that, for all $p\in X$ and $r\in (0, \diam(X)]$, any two points $x,y\in A(p, r, 2r)$ can be joined by a continuum in $A(p,r/\lambda, 2\lambda r)$. We abbreviate this condition as ALC, or $\lambda$-ALC to emphasize the constant. 
\end{itemize}
\end{definition}

We will make use of the following relationship between the above two notions, which is a minor modification of facts found in the literature.
\begin{lemma}\label{lem:ALCLC}
If a complete, metrically doubling space $X$ is annularly linearly connected with constant $\lambda$ and homeomorphic to $\RR^2$, then $X$ is linearly contractible, with constant $A=A(\lambda)$ depending only on $\lambda$.
\end{lemma}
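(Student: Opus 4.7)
The plan is to deduce from the ALC hypothesis a quantitative form of linear local connectivity on complements of balls (the LLC$_2$ condition), and then to invoke the planar topology of $X$ to enclose each ball in a simply-connected open set of controlled diameter. Since simply-connected open subsets of a topological plane are homeomorphic to $\RR^2$ and therefore contractible, this gives linear contractibility.

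First, I would show that ALC with constant $\lambda$ implies the LLC$_2$ condition with some constant $C_1 = C_1(\lambda)$: for every $x \in X$ and every $r > 0$, any two points of $X \setminus \overline{B}(x, r)$ can be joined by a continuum in $X \setminus \overline{B}(x, r/C_1)$. This follows by iterating ALC on a dyadic sequence of annuli around $x$. Each input point is pushed through the successive annuli $A(x, 2^k r, 2^{k+1} r)$ via ALC-continua until both points reach a common ``scaffolding'' annulus $A(x, r, 2r)$, inside which they are connected by a final application of ALC. The metric doubling hypothesis ensures that the combinatorics remain bounded. A symmetric dyadic iteration inward yields the analogous LLC$_1$ condition at the same time.

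Second, given a ball $B(x, r)$ and a sufficiently large constant $A = A(\lambda)$ to be specified, I consider the unbounded connected component $V$ of the open set $X \setminus \overline{B}(x, A r / (2 C_1))$; local connectedness of the topological plane $X$ makes $V$ open. By LLC$_2$, every point of $X$ outside $\overline{B}(x, A r / 2)$ lies in $V$, so $D := X \setminus V$ is a compact subset of $\overline{B}(x, Ar/2)$. Using the planarity of $X \cong \RR^2$, the connected component of $D$ containing $x$, together with its bounded complementary components in $D$, can be assembled into a simply-connected open set $U_0$ with $B(x, r) \subseteq U_0 \subseteq B(x, Ar)$; existence of $U_0$ is a standard fact from planar topology (``filling in the bounded holes'' of a continuum in $\RR^2$), and LLC$_1$ is used to verify the inclusion $B(x, r) \subseteq U_0$ by connecting each point of $B(x, r)$ to $x$ inside $U_0$. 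Contractibility of $B(x, r)$ in $B(x, Ar)$ is then immediate: $U_0$ is a simply-connected open subset of a topological plane, hence homeomorphic to $\RR^2$ and contractible, so the inclusion $B(x, r) \hookrightarrow U_0 \subseteq B(x, Ar)$ is null-homotopic.

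The main obstacle is the second step: extracting, from the purely metric construction of $V$, a simply-connected neighborhood $U_0$ of $B(x, r)$ of diameter $\lesssim A r$. This combines the metric LLC conditions (derived from ALC) with genuinely planar-topological arguments, and is where the hypothesis $X \cong \RR^2$ enters essentially; in higher-dimensional analogues of this question the argument breaks down at precisely this point. For the technical details I would closely follow the topological arguments from the quasisymmetric uniformization literature on doubling topological surfaces, such as \cite{BK02, Wi08, Wi10}, which are presumably the ``facts found in the literature'' that the lemma alludes to.
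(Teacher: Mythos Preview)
Your proposal is correct and follows essentially the same route as the paper: the paper's proof simply cites \cite{Ki16} for the implication ALC $\Rightarrow$ LLC$_1$/LLC$_2$ and then \cite{BK02}, Lemma 2.5, for the planar argument that LLC$_1$/LLC$_2$ implies linear contractibility, and your sketch unpacks exactly these two steps. Your identification of the relevant literature (\cite{BK02, Wi08, Wi10}) is on target.
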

\begin{proof}
Lemma 5.2 of \cite{Ki16} shows that $X$ must satisfy two conditions known as ``$\text{LLC}_1$'' and ``$\text{LLC}_2$'', with constants depending only on $\lambda$. The argument in Lemma 2.5 of \cite{BK02} then shows that $X$ must be linearly contractible with constant depending only on $\lambda$. (Note that, since $X$ is homeomorphic to $\RR^2$, there is no need to restrict to a bounded subset as in the proof of that lemma.)
\end{proof}

We will use the notion of simultaneous pointed Gromov-Hausdorff convergence of spaces and functions. Namely, we will consider triples $(X,p,\phi)$, where $X$ is a metric space, $p\in X$ is a base point, and $\phi:X\rightarrow \RR^k$ is a Lipschitz function. This type of convergence is explained in detail in a number of places. See, for example, Chapter 8 of \cite{DS97}, \cite{Ke04}, \cite{GCD15}, or \cite{CKS15}.

If $(X_n, p_n)$ is a sequence of metrically $D$-doubling spaces, then it has a subsequence which converges in the pointed Gromov-Hausdorff sense to a metrically $D$-doubling space. If furthermore $f_n:X_n\rightarrow \RR$ are all $L$-Lipschitz functions, for some fixed $L$, then $\{(X_n, p_n, f_n)\}$ has a subsequence converging to a triple $(X,p,f)$ for which $f$ is $L$-Lipschitz.

\begin{definition}
Let $X$ be a metric space, $p\in X$ a point, and $\{\lambda_k\}$ is a sequence of positive real numbers tending to zero. If the sequence
$$\left\{ \left(\lambda_k^{-1}X, p\right)  \right\} $$
converges in the pointed Gromov-Hausdorff sense to a space $(\hat{X},\hat{p})$, then $(\hat{X}, \hat{p})$ is called a \textit{blowup} of $X$ at $p$.

Let $\phi:X\rightarrow\RR^2$ be a Lipschitz function. If the sequence
$$\left\{ \left(\lambda_k^{-1}X, p, \lambda_k^{-1}(\phi - \phi(p))\right)  \right\}$$
converges to a triple $(\hat{X}, \hat{p}, \hat{\phi})$, then $(\hat{X}, \hat{p}, \hat{\phi})$ is called a \textit{blowup} of $(X,p,\phi)$. In this case $(\hat{X}, \hat{p})$ will be a blowup of $(X,p)$.
\end{definition}

By our previous remarks, a metrically doubling space admits blowups at each of its points, as does a metrically doubling space together with a Lipschitz function. For the definition of blowups at almost every point of a pointwise doubling metric measure space, as used in Theorem \ref{thm:main}, see Remark \ref{rmk:ptwiseblowup}.

\subsection{Metric measure space notions}
Recall our standing assumption that our metric measure spaces are complete, separable, and Borel regular.

If $(X,d,\mu)$ is a metric measure space and $U\subset X$ is a measurable subset, then we write $\mu|_U$ for the measure defined by
$$ \mu|_U(E) = \mu(E\cap U).$$

The definition of ``metrically doubling'' for metric spaces has already appeared; now we introduce the related concept for metric measure spaces.

\begin{definition}
A metric measure space $(X,d,\mu)$ is called \textit{doubling} if there is a constant $C\geq 0$ such that
$$ \mu(B(x,r))\leq C\mu(B(x,2r) $$
for all $x\in X$ and $r>0$.
\end{definition}

If a metric measure space is doubling, then the underlying metric space is metrically doubling (see \cite{He01}). Of course, a metrically doubling space may carry a specific measure $\mu$ which is not doubling.

\begin{definition}
A metric measure space $(X,d,\mu)$ is called \textit{pointwise doubling at $x\in X$} if
$$ \limsup_{r\searrow 0} \frac{\mu(B(x,2r))}{\mu(B(x,r))} < \infty.$$ 
We call $(X,d,\mu)$ \textit{pointwise doubling} if it is pointwise doubling at $\mu$-a.e. $x\in X$.
\end{definition}

\begin{definition}
We say that $(X,d,\mu)$ is \textit{$(C,R)$-uniformly pointwise doubling at $x\in X$} if
\begin{equation}\label{eqn:unifptwise}
\mu(B(x,r)) \leq C \mu(B(x,r/2)) \hspace{0.5in} \text{for all } r< R
\end{equation}

If, for some $C\geq 1$ and $R>0$, the space $(X,d,\mu)$ is $(C,R)$-uniformly pointwise doubling at $\mu$-a.e. $x\in X$, we call $(X,d,\mu)$ \textit{uniformly pointwise doubling}.

A subset $A\subset X$ is called \textit{$(C,R)$-uniformly pointwise doubling} if $\mu$ is $(C,R)$-uniformly pointwise doubling at $x$ for all $x\in A$. (Note that we ask that \eqref{eqn:unifptwise} holds for balls in $X$ centered at points of $A$, not that $(A,d,\mu)$ is uniformly pointwise doubling at all $x\in A$.)
\end{definition}

We note that the Lebesgue density theorem applies to pointwise doubling measures; see Section 3.4 of \cite{HKST15}. From this it follows immediately that if $(X,d,\mu)$ is pointwise doubling and $U\subset X$ is Borel, then $(U,d,\mu|_U)$ is pointwise doubling.

\begin{rmk}\label{rmk:ptwiseporous}
If $\mu$ is a doubling measure on $X$, then every porous set in $X$ has $\mu$-measure zero. However, it is not true that every porous set in a pointwise doubling metric measure space $(X,d,\mu)$ must have measure zero. (For example, take $\RR^2$ equipped with the measure $\mu$ which is the restriction of $\mathcal{H}^1$ to a single line.) However, the following fact is immediate from the Lebesgue density theorem: If $A\subset X$ is a uniformly pointwise doubling subset, and $S\subset A$ is porous as a subset of the metric space $(A,d)$, then $S$ has measure zero.
\end{rmk}

The following facts about pointwise doubling spaces combine Lemmas 2.2 and 2.3 of \cite{BL15_RNP} (see also \cite{BS13} and \cite{Ba15}).
\begin{lemma}\label{lem:ptwisedoubling}
Let $(X,d,\mu)$ be a complete, pointwise doubling metric measure space. Then there exists a countable collection $\{A_i\}$ of closed subsets of $X$, along with constants $C_i>1$ and $R_i>0$, with the following properties:
\begin{enumerate}[(i)]
\item We have $\mu(X\setminus \cup_i A_i) = 0$.
\item Each $A_i$ is metrically doubling,
\item Each $A_i$ is $(C_i,R_i)$-uniformly pointwise doubling.
\end{enumerate}
\end{lemma}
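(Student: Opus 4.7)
The plan is to stratify $X$ by the constant and scale at which the doubling inequality holds at each point, then refine each stratum to bounded closed pieces on which a standard packing argument establishes metric doubling.

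For each pair of positive integers $(n,k)$, set
$$A_{n,k} := \{x \in X : \mu(B(x,s)) \leq n\,\mu(B(x,s/2))\text{ for every } s \in (0, 1/k]\}.$$
Pointwise doubling of $\mu$ at $\mu$-a.e.\ $x$ forces every such point into some $A_{n,k}$, giving (i), while property (iii) with constants $(n, 1/k)$ is immediate from the definition. Each $A_{n,k}$ is Borel since it is defined by countably many inequalities involving the Borel functions $x \mapsto \mu(B(x,s))$ (by a monotonicity and right-continuity argument one may reduce the ``for every $s$'' to a countable dense set of scales). Using inner regularity of $\mu$, replace $A_{n,k}$ by a full-measure $F_\sigma$ subset, and split it into its constituent closed pieces, producing a countable family of closed sets whose union still has full $\mu$-measure and on each of which $(n,1/k)$-uniform pointwise doubling still holds.

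For metric doubling, fix a countable dense set $\{q_j\} \subset X$ and define
$$A_{n,k,j} := A_{n,k} \cap \overline{B}(q_j,\, 1/(100k)).$$
These are closed, cover $X$ modulo a null set, inherit the uniform pointwise doubling, and have diameter at most $1/(50k)$, well below $1/k$. A standard packing argument now controls the metric doubling constant of each $A_{n,k,j}$: given a ball $B(x,r)$ with $x \in A_{n,k,j}$ and $r \leq \diam A_{n,k,j}$, pick a maximal $(r/2)$-separated subset $\{y_i\} \subset B(x,r) \cap A_{n,k,j}$, so that the balls $B(y_i, r/4)$ are pairwise disjoint and contained in $B(x, 2r)$; iterating the uniform doubling inequality at $x$ and at each $y_i$, all at scales below $1/k$, bounds the cardinality of $\{y_i\}$ purely in terms of $n$, and hence bounds the number of $(r/2)$-balls needed to cover $B(x,r)$.

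The main obstacle is the closedness step: the doubling inequality is not cleanly preserved under pointwise limits of centers because spheres may carry positive mass, so a direct semicontinuity argument does not quite close $A_{n,k}$ as defined. The clean fix is the inner-regularity passage described above, which loses no measure. The remaining pieces---the Borel-to-$F_\sigma$ bookkeeping and the volumetric packing estimate---are routine and essentially reproduce the arguments in the cited references \cite{BL15_RNP, BS13, Ba15}.
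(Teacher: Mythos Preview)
Your proof is correct and follows the standard route that the paper defers to: the paper gives no argument of its own for this lemma but simply cites Lemmas~2.2 and~2.3 of \cite{BL15_RNP} (together with \cite{BS13} and \cite{Ba15}), and your stratify--inner-regularize--localize--pack argument is exactly what is carried out there. One cosmetic slip: for open balls the map $s\mapsto\mu(B(x,s))$ is left-continuous rather than right-continuous, but the reduction to a countable dense set of scales goes through in the same way.
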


\begin{rmk}\label{rmk:ptwiseblowup}
If $(X,d,\mu)$ is pointwise doubling (but not necessarily metrically doubling), then for $\mu$-a.e. $x\in X$, we can define the blowups of $X$ at $x$ as follows: Decompose $X$ into closed sets $A_i$ as in Lemma \ref{lem:ptwisedoubling} and, define the blowups of $X$ at $x\in A_i$ to be the blowups of $A_i$ at $x$, which are well-defined as $A_i$ is metrically doubling. For $\mu$-a.e. $x\in X$, this choice is independent of the choice of decomposition of $X$ (see \cite{BL15_RNP}, Section 9).

When we speak of the blowups of a pointwise doubling metric measure space $(X,d,\mu)$, as in Theorem \ref{thm:main}, this is what we mean. Observe that the blowups of a pointwise doubling space are metrically doubling metric spaces.

Note that if $(X,d,\mu)$ is metrically doubling and pointwise doubling, the blowups of $X$ in this sense may not coincide with the blowups of $X$ in the metric sense. For example, if $X$ is $\RR^2$ equipped with the measure $\mu$ which is the restriction of $\mathcal{H}^1$ to a single line, then $(X,d,\mu)$ is both metrically and pointwise doubling. However the blowups of $X$, in the sense of this remark, are lines almost everywhere.

If $\mu$ has the additional property that it assigns measure zero to porous subsets of $X$, then the two notions of blowup agree $\mu$-almost everywhere. (See Remark 7.2 in \cite{CKS15}.)
\end{rmk}

\subsection{Alberti representations}
We will not really need any properties of Alberti representations other than Proposition \ref{prop:LQblowup} below. However, for background we give the relevant definitions. For more on Alberti representations, we refer the reader to \cite{Ba15}, as well as \cite{Sc16}, \cite{CKS15}, and \cite{BL15}.

If $X$ is a metric space, let $\Gamma(X)$ denote the set of all bilipschitz functions
$$ \gamma: K \rightarrow X $$
where $K$ is a non-empty compact subset of $\RR$. We write $\text{Dom }\gamma$ for the domain $K$ of $\gamma$ and $\text{Im }\gamma$ for the image of $\gamma$ in $X$.

If $\gamma\in \Gamma(X)$, then the \textit{graph} of $\gamma$ is the compact set
$$ \{(t,x) \in \RR \times X : t\in K, \gamma(t)=x\}.$$
We endow $\Gamma(X)$ with the metric $d$ which sets $d(\gamma, \gamma')$ equal to the Hausdorff distance in $\RR\times X$ between the graphs of $\gamma$ and $\gamma'$. With this metric, $\Gamma(X)$ is a complete, separable metric space.

\begin{definition}
Let$(X,d,\mu)$ be a metric measure space, $\mathbb{P}$ a Borel probability measure on $\Gamma(X)$, and, for each $\gamma\in\Gamma(X)$, let $\nu_\gamma$ a Borel measure on $X$ that is absolutely continuous with respect to $\mathcal{H}^1|_{\text{Im }\gamma}$.

For a measurable set $A\subset X$, we say that $\mathcal{A} = (\mathbb{P}, \{\nu_\gamma\})$ is a \textit{Alberti representation of} $\mu|_A$ if, for each Borel set $Y\subset A$, 
\begin{itemize}
\item the map $\gamma \mapsto \nu_\gamma(Y)$ is Borel measurable, and
\item we have
$$ \mu(Y) = \int_{\Gamma(X)} \nu_\gamma(Y) d\mathbb{P}(\gamma).$$
\end{itemize}
\end{definition}

To specify the directions of Alberti representations, we define a cone in $\RR^n$ as folows: Given $w\in S^{n-1}$ and $\theta\in (0,1)$, let
$$ C(w,\theta) = \{ v\in \RR^n : v\cdot w \geq (1-\theta)\| v\|\}.$$
A collection $C_1, \dots, C_m$ of cones in $\RR^n$ are \textit{independent} if any choice of non-zero vectors $v_1\in C_1, \dots, v_m\in C_m$ form a linearly independent set.

Suppose $(X,d,\mu)$ is a metric measure space with Alberti representation $(\mathbb{P}, \{\nu_\gamma\})$. Let $\phi:X\rightarrow\RR^n$ be Lipschitz and let $C\subset \RR^n$ be a cone. We say that the Alberti representation $(\mathbb{P}, \{\nu_\gamma\})$ is in the \textit{$\phi$-direction of the cone $C$} if
$$ (\phi \circ \gamma)'(t) \in C \setminus \{0\} $$
for $\mathbb{P}$-a.e. $\gamma\in \Gamma(X)$ and a.e. $t\in \text{Dom }\gamma$.

Finally, if $\phi:X\rightarrow \RR^n$ is Lipschitz, we say that a collection $\mathcal{A}_1, \dots, \mathcal{A}_m$ of Alberti representations is \textit{$\phi$-independent} if there are independent cones $C_1, \dots, C_m$ in $\RR^n$ such that each $\mathcal{A}_i$ is in the $\phi$-direction of $C_i$.

\section{Lipschitz quotient mappings}\label{sec:LQ}
Lipschitz quotient mappings were first introduced in \cite{BJLPS99} in the context of Banach spaces.

\begin{definition}\label{def:LQ}
Let $X$ and $Y$ be metric spaces. A mapping $F:X\rightarrow Y$ is called a Lipschitz quotient (LQ) mapping if there is a constant $L\geq 1$ such that
\begin{equation}\label{eqn:LQ}
 B(F(x), r/L) \subseteq F(B(x,r)) \subseteq B(F(x),Lr)
\end{equation}
for all $x\in X$ and all $r>0$.

If we wish to emphasize the constant $L$, we will call such a map an $L$-LQ map.
\end{definition}
The second inclusion in \eqref{eqn:LQ} simply says that an $L$-LQ mapping is $L$-Lipschitz.

The way Lipschitz quotient mappings enter the proof of Theorem \ref{thm:main} is via the following result. It was proven (in slightly different language) in \cite{Sc13, Sc16}, Theorem 5.56 (see equation (5.96) in that paper) and (for doubling measures) in \cite{GCD15}, Corollary 5.1. A significantly stronger version of this result in the setting of Lipschitz differentiability spaces can be found in \cite{CKS15}, Theorem 1.11. All three of these results yield the following proposition with only minor changes.

\begin{prop}\label{prop:LQblowup}
Let $(X,d,\mu)$ be a metric measure space with $\mu$ pointwise doubling. Suppose that, for some Lipschitz function $\phi:X\rightarrow\RR^n$, $\mu$ has $n$ $\phi$-independent Alberti representations. Then for almost every $x\in X$, there is a constant $L\geq 1$ such that for every blowup $(\hat{X},\hat{x},\hat{\phi})$ of $(X,x,\phi)$, the mapping $\hat{\phi}$ is a Lipschitz quotient map of $\hat{X}$ onto $\RR^n$ with constant $L$.
\end{prop}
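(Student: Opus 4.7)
First I would reduce to a uniform setting. Apply Lemma \ref{lem:ptwisedoubling} to decompose $X$ (modulo a $\mu$-null set) into metrically doubling, uniformly pointwise doubling Borel subsets $A_j$; the $n$ $\phi$-independent Alberti representations restrict naturally to each $\mu|_{A_j}$. A standard Lusin-type argument applied to the $\phi$-speeds $t\mapsto|(\phi\circ\gamma)'(t)|$ then permits further decomposition so that on a positive-measure subset $A$, through $\mu$-a.e.\ $x\in A$ there pass curves $\gamma_1^x,\dots,\gamma_n^x$, one from each Alberti representation $\mathcal{A}_i$, with $(\phi\circ\gamma_i^x)'(t_x)\in C_i$ of modulus at least some uniform $\sigma>0$. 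Since $\mu\bigl(X\setminus\bigcup A_j\bigr)=0$, it suffices to establish the LQ conclusion at a Lebesgue density point $x\in A$.

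Next I would analyze the blowup. Fix such $x$ and a blowup $(\hat X,\hat x,\hat\phi)$ along scales $\lambda_k\to 0$. The upper inclusion $\hat\phi(B(\hat x,r))\subseteq B(\hat\phi(\hat x),L_0 r)$ is automatic from the Lipschitz constant $L_0$ of $\phi$. For the lower inclusion, rescale each curve by $s\mapsto\gamma_i^x(t_x+\lambda_k s)$ to obtain uniformly bi-Lipschitz maps into $\lambda_k^{-1}X$, whose domains $\lambda_k^{-1}(\text{Dom}\,\gamma_i^x-t_x)$ exhaust $\RR$ (using that $t_x$ is a density point of the domain). An Arzela-Ascoli argument for convergence of maps extracts subsequential limits $\hat\gamma_i\colon\RR\to\hat X$ with $\hat\gamma_i(0)=\hat x$; differentiability of $\phi\circ\gamma_i^x$ at $t_x$ and the speed lower bound then give
$$\hat\phi(\hat\gamma_i(s))=sv_i,\qquad v_i\in C_i,\quad |v_i|\geq\sigma,\quad s\in\RR.$$
Since $A$ has density one near $x$, the same construction produces such $n$-tuples of bi-Lipschitz lines through a subset $\hat A\subseteq\hat X$ of full limit measure.

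To verify the quantitative surjectivity $B(\hat\phi(\hat y),r/L)\subseteq\hat\phi(B(\hat y,r))$, I would use cone-independence of $C_1,\dots,C_n$ to decompose any target $v\in\RR^n$ as $v=\sum_i c_i v_i$ with $|c_i|\leq C(\theta)\|v\|$. Beginning at (a point of $\hat A$ close to) $\hat y$, follow the bi-Lipschitz line in the first direction for parameter $c_1$, shifting the $\hat\phi$-image by exactly $c_1 v_1$ and the position in $\hat X$ by at most a uniform multiple of $|c_1|$. At the new endpoint, switch to a line in direction $v_2$, and iterate. Controlling the accumulated error yields the desired surjectivity with LQ constant $L$ depending only on $n$, $\sigma$, $\theta$, and $L_0$.

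The main obstacle is the curve-switching in the last step: at each intermediate endpoint the construction requires a bi-Lipschitz line in the next cone direction, but $\hat A$ need not equal all of $\hat X$. This is resolved by a short iterative correction, using the density of $\hat A$ in every ball together with the uniformity of the bi-Lipschitz constants, to snap each intermediate endpoint to a nearby point of $\hat A$ at a small cost in $\hat\phi$-displacement. This step forms the technical core of the arguments in \cite{Sc13, Sc16, GCD15, CKS15}, which the proposition repackages, so I would refer the reader there for the remaining quantitative details.
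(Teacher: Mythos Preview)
The paper does not actually supply a proof of this proposition: it is stated as a black box, with the sentence preceding it attributing the result to \cite{Sc13, Sc16} (Theorem~5.56), \cite{GCD15} (Corollary~5.1), and \cite{CKS15} (Theorem~1.11), and remarking that ``all three of these results yield the following proposition with only minor changes.'' Your proposal ultimately defers to the same references, so at the level of what is being claimed you are in agreement with the paper.

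Your sketch of the underlying mechanism is broadly faithful to those sources, but one point of organization differs from how the cited arguments are usually run and is worth flagging. You propose to pass to the blowup first, produce bi-Lipschitz lines $\hat\gamma_i$ through a subset $\hat A\subseteq\hat X$ ``of full limit measure,'' and then perform the curve-switching inside $\hat X$. This is delicate: in the pointwise doubling setting the blowup is a priori only a metric space, so ``full limit measure'' has no immediate meaning, and even if one passes to measured GH limits, full measure does not by itself give density in every ball. The arguments in \cite{Sc16, GCD15} instead establish an \emph{approximate} Lipschitz quotient inequality for $\phi$ itself at small scales around density points of the good set $A$ (this is where the curve-switching and iterative correction take place, at finite scale in $X$), and only then pass to the Gromov--Hausdorff limit; the exact LQ inclusion at every point of $\hat X$ then falls out automatically from convergence. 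Reorganizing your sketch in that order removes the need to speak of $\hat A$ or its measure at all, and matches the references you and the paper both cite.
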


In the remainder of this section, we collect some basic properties of Lipschitz quotient mappings that will be used below.

The following path lifting lemma is one of the main tools used in \cite{BJLPS99} (Lemma 4.4) and \cite{JLPS00} (Lemma 2.2). We repeat it here in our context, along with its brief proof.

\begin{lemma}\label{lem:pathlifting}
Let $X$ be a proper metric space. Let $F:X\rightarrow Y$ be $L$-LQ, and let $\gamma:[0,T]\rightarrow Y$ be a $1$-Lipschitz curve with $\gamma(0)=F(x)$. Then there is a $L$-Lipschitz curve $\tilde{\gamma}:[0,T]\rightarrow X$ such that $\tilde{\gamma}(0)=x$ and $F\circ \tilde{\gamma} = \gamma$. 
\end{lemma}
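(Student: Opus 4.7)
The plan is to build $\tilde\gamma$ as a limit of piecewise-defined discrete lifts at ever-finer partitions of $[0,T]$, exploiting the surjectivity side of the LQ property together with properness for a compactness argument.

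First, I would do the \emph{discrete lifting step}. For each $n \ge 1$, let $t_i^n = iT/n$ for $0 \le i \le n$, and fix a small slack $\epsilon_n$ (say $\epsilon_n = 2^{-n}/n$). Set $\tilde\gamma_n(0) := x$, and inductively assume $\tilde\gamma_n(t_i^n)$ has been chosen with $F(\tilde\gamma_n(t_i^n)) = \gamma(t_i^n)$. Since $\gamma$ is $1$-Lipschitz, $d_Y(\gamma(t_{i+1}^n), F(\tilde\gamma_n(t_i^n))) \le T/n < T/n + \epsilon_n$, so the LQ inclusion $B(F(y), r/L) \subseteq F(B(y,r))$ applied with $r = L(T/n + \epsilon_n)$ yields a point $\tilde\gamma_n(t_{i+1}^n) \in B(\tilde\gamma_n(t_i^n), L(T/n + \epsilon_n))$ with $F(\tilde\gamma_n(t_{i+1}^n)) = \gamma(t_{i+1}^n)$. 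Applying the same LQ argument inside each subinterval, one extends $\tilde\gamma_n$ to a (generally discontinuous) map on all of $[0,T]$ satisfying $F \circ \tilde\gamma_n = \gamma$ and $d(\tilde\gamma_n(s), \tilde\gamma_n(t)) \le L|s-t| + 2\epsilon_n$ whenever $s,t$ lie in a common subinterval $[t_i^n, t_{i+1}^n]$; summing along the partition gives a global bound $d(\tilde\gamma_n(s),\tilde\gamma_n(t)) \le L|s-t| + L(2T/n + 2\epsilon_n) + 2\epsilon_n$ for all $s,t \in [0,T]$.

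Second, I would pass to a limit by \emph{Arzelà--Ascoli}. The estimate above gives $\tilde\gamma_n([0,T]) \subset \overline{B}(x, LT + o(1))$, which is contained in a fixed compact set by properness of $X$. Choose a countable dense set $D \subset [0,T]$ containing $0$. By a standard diagonal argument, some subsequence $\tilde\gamma_{n_k}$ converges pointwise on $D$ to a map $\tilde\gamma: D \to X$. The approximate Lipschitz bound passes to the limit and gives $d(\tilde\gamma(s),\tilde\gamma(t)) \le L|s-t|$ for $s,t \in D$, so $\tilde\gamma$ extends uniquely to an $L$-Lipschitz map $\tilde\gamma: [0,T] \to X$ (using completeness of $X$).

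Finally, I would verify the \emph{lifting property}. Since each $F \circ \tilde\gamma_{n_k}$ equals $\gamma$, and $F$ is continuous (as an $L$-Lipschitz map), continuity of $F$ together with pointwise convergence on $D$ gives $F(\tilde\gamma(t)) = \gamma(t)$ for $t \in D$; by continuity of both sides this extends to all of $[0,T]$. The base-point condition $\tilde\gamma(0) = x$ is immediate since $\tilde\gamma_n(0) = x$ for every $n$.

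The routine technical nuisance, and the only step where one has to be careful, is the slack $\epsilon_n > 0$ required because the LQ condition gives an \emph{open} inclusion $B(F(y), r/L) \subseteq F(B(y,r))$ rather than a closed one: one must choose $\epsilon_n$ shrinking fast enough that the cumulative error $n\epsilon_n$ across the $n$ partition points tends to zero, which is why I build in a factor like $\epsilon_n = 2^{-n}/n$. No other step presents a genuine obstacle; properness of $X$ is used precisely once, to ensure the Arzelà--Ascoli compactness.
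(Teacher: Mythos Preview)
Your argument is correct and follows essentially the same discrete-lift-then-Arzel\`a--Ascoli strategy as the paper. The only minor difference is that the paper avoids your slack $\epsilon_n$ altogether by using properness of $X$ a second time to upgrade the open-ball LQ inclusion to the closed-ball version $\overline{B}(F(y),r/L)\subseteq F(\overline{B}(y,r))$, so that the discrete lifts $\tilde\gamma_m$ on $\tfrac{1}{m}\mathbb{Z}\cap[0,T]$ are exactly $L$-Lipschitz from the start.
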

\begin{proof}
Fix $m\in\mathbb{N}$. We define
$$\tilde{\gamma}_m: \left(\frac{1}{m}\mathbb{Z} \cap [0,T]\right) \rightarrow X$$
as follows.

Set $\tilde{\gamma}_m(0)= x$. By induction, assume that $\tilde{\gamma}_m(k/m)$ has been defined and $F(\tilde{\gamma}(k/m)) = \gamma(k/m)$.

We know that
$$ F(\overline{B}( \tilde{\gamma}_m(k/m), L/m)) \supseteq \overline{B}( \gamma(k/m), 1/m) \ni \gamma((k+1)/m). $$
Here the first inclusion follows from the fact that $F$ is a $L$-LQ mapping and that $X$ is proper, while the second inclusion follows from the fact that $\gamma$ is $1$-Lipschitz.

We therefore define  $\tilde{\gamma}_m(k/m)$ to be any point of $\overline{B}( \tilde{\gamma}_m(k/m), L/m)$ that maps onto $\gamma((k+1)/m)$ under $F$.

It follows that $\tilde{\gamma}_m$ is $L$-Lipschitz for each $m\in\mathbb{N}$. By a standard Arzel\`a-Ascoli type argument, a sub-sequence of $\{\tilde{\gamma}_m\}$converges as $m\rightarrow \infty$ to a curve $\tilde{\gamma}$ as desired.
\end{proof}

The following result is from Proposition 4.3 of \cite{BJLPS99}, or alternatively from the identical Proposition 2.1 of \cite{JLPS00}. These results are not stated in this form, but rather are stated only for mappings from $\RR^2$ to $\RR^2$. However the proof works exactly the same way in the more general setting below, using Lemma \ref{lem:pathlifting}.

\begin{prop}[\cite{BJLPS99} Proposition 4.3,  \cite{JLPS00} Proposition 2.1]\label{prop:LQdiscrete}
For each $L\geq 1$ and $D\geq 1$, there is a constant $N = N(L,D)$ with the following property:

Let $X$ be a proper, metrically $D$-doubling topological plane, and let $f:X\rightarrow \RR^2$ be a $L$-LQ mapping. Then $f$ is discrete, and furthermore
$$ \#f^{-1}(p) \leq N $$
for all $p\in\RR^2$. 
\end{prop}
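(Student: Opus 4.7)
The plan is to adapt the arguments of \cite{BJLPS99,JLPS00}, which treat the case $X=\RR^2$, to the doubling planar setting at hand. The key tools are Lemma \ref{lem:pathlifting}, the Jordan curve theorem in the topological plane $X$, and finally metric doubling of $X$ for a counting step.

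First, $f$ is open: the inclusion $B(f(x),r/L)\subseteq f(B(x,r))$ in \eqref{eqn:LQ} says so directly. Discreteness can then be deduced from the classical theorem of Chernavskii that a nonconstant open continuous map between $2$-manifolds is discrete. Alternatively, and more in the spirit of path lifting, one argues by contradiction: if $f^{-1}(p)$ accumulated at some $x_0 \in X$, one would lift $\partial B(p,\epsilon)$ for a tiny $\epsilon$ through infinitely many nearby $p$-preimages simultaneously, producing arbitrarily many disjoint Jordan loops inside a single planar neighborhood of $x_0$, which is topologically impossible.

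For the multiplicity bound, suppose $x_1,\dots,x_N\in f^{-1}(p)$ are distinct, and set $\rho=\tfrac14\min_{i\neq j} d(x_i,x_j)$. For $\epsilon>0$ with $2\pi L\epsilon<\rho$, Lemma \ref{lem:pathlifting} applied to a $1$-Lipschitz parametrization of $\partial B(p,\epsilon)$ produces $N$ lifts $\tilde\gamma_i$, each $L$-Lipschitz, with $\tilde\gamma_i(0)=x_i$ and image contained in the pairwise disjoint balls $\overline{B}(x_i, 2\pi L\epsilon)$. Using discreteness and shrinking $\epsilon$ if needed, each lift closes up at $x_i$ (its endpoint is a $p$-preimage within a ball that contains no other $p$-preimage), and by the Jordan curve theorem on $X$ it bounds a topological disk $D_i\subseteq \overline{B}(x_i, 2\pi L\epsilon)$. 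Lifting radial segments from $p$ yields $f(D_i)\supseteq B(p,\epsilon)$, while the lower inclusion in \eqref{eqn:LQ} forces $B(x_i,\epsilon/L)\subseteq D_i$; in particular one obtains $N$ pairwise disjoint balls of radius $\epsilon/L$ in $X$.

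The remaining step, and the main obstacle, is converting these disjoint balls into a bound on $N$ depending only on $L$ and $D$. The balls $B(x_i,\epsilon/L)$ sit inside a ball whose radius is controlled by $\diam\{x_1,\dots,x_N\}$, which is a priori unbounded, so one cannot directly apply the $D$-doubling of $X$. To bypass this I would iterate the construction inside a single disk: pick a generic $q\in B(p,\epsilon)$ (which has $\geq N$ preimages, one in each $D_j$), restrict attention to $D_1$, and reapply the same argument at a preimage of $q$ in $D_1$ to produce $N$ pairwise disjoint sub-disks of $D_1$. After $k$ iterations one expects $N^k$ pairwise disjoint balls of radius comparable to $\epsilon L^{-O(k)}$, all inside the single ball $\overline{B}(x_1, 2\pi L\epsilon)$; the $D$-doubling property then caps this count by roughly $D^{O(k\log L)}$, and matching the two forces $N\leq N(L,D)$. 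Making this iteration step rigorous, and in particular controlling the geometric ratio between successive generations of disks, uses the full strength of the $L$-LQ property (both the upper and lower inclusions in \eqref{eqn:LQ}) and is the delicate part of the argument.
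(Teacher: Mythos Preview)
Your setup through the disjoint Jordan disks $D_i$ is on the right track and matches \cite{BJLPS99,JLPS00}. The gap is the iteration. From what you have established, a generic $q\in B(p,\epsilon)$ has at least one preimage in each $D_j$, but there is no reason it should have $N$ preimages inside the single disk $D_1$; reapplying the construction at the lone guaranteed preimage $y_1\in D_1$ therefore yields one sub-disk, not $N$. The scheme does not generate $N^k$ disjoint balls, and the packing count never closes. (The claim $B(x_i,\epsilon/L)\subseteq D_i$ is also a bit delicate in a bare topological plane, since small metric balls need not be path-connected, but that is secondary.)

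The paper bypasses this difficulty with a rescaling observation rather than an iteration. It notes that the argument of \cite{BJLPS99,JLPS00}, read verbatim in a $D$-doubling topological plane, already bounds $\#\bigl(f^{-1}(p)\cap B(x,1)\bigr)$ for any $x\in f^{-1}(p)$ by some $N=N(L,D)$: once the preimages under consideration are confined to a unit ball, the lifted loops and the disks they bound live in a ball of controlled radius, and $D$-doubling caps how many such disjoint pieces can be packed there. The global bound is then a one-line consequence of scale invariance: for each $\lambda>0$ the map $x\mapsto\lambda f(x)$ is $L$-LQ on the $D$-doubling plane $\lambda X$, so the unit-ball estimate in $\lambda X$ reads $\#\bigl(f^{-1}(p)\cap B_X(x,\lambda^{-1})\bigr)\le N(L,D)$, and letting $\lambda\to 0$ gives $\#f^{-1}(p)\le N(L,D)$. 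This dilation trick is exactly the missing idea that converts the local packing bound into a global multiplicity bound, and it replaces your proposed iteration entirely.
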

\begin{proof}
A reading of the (identical) proofs of Proposition 4.3 of \cite{BJLPS99} and Proposition 2.1 of \cite{JLPS00} shows that the only requirement on the domain of the mapping is that it is a doubling topological plane.

With this remark in mind, what those proofs directly show is the following statement: If $X$ is a proper, metrically $D$-doubling topological plane and $f:X\rightarrow \RR^2$ is an $L$-LQ mapping, then $\#(f^{-1}(p) \cap B(x,1))$ is uniformly bounded (for all $p\in\RR^2$ and $x\in f^{-1}(p)\in X$) by a constant depending only on $L$ and $D$. To achieve the conclusion of Proposition \ref{prop:LQdiscrete}, one need only rescale and apply this result, for each $\lambda>0$, to the $L$-LQ mappings $x\mapsto \lambda f(x)$, considered as mappings on the metrically $D$-doubling topological plane $\lambda X$.
\end{proof}

We remark that, to our knowledge, it is open whether a result like Proposition \ref{prop:LQdiscrete} holds for Lipschitz quotient mappings from $\RR^n$ to $\RR^n$, for $n\geq 3$.

Proposition \ref{prop:LQdiscrete} will tell us that our blowup mappings are discrete open mappings between topological planes. The following result of \v{C}ernavskii-V\"ais\"ala is then relevant.

\begin{prop}[\cite{Va66}, Theorem 5.4]\label{prop:vaisala}
Let $f:M\rightarrow N$ be a continuous, discrete, and open mapping between topological $n$-manifolds. Then $f$ is a local homeomorphism off of a closed branch set $\mathcal{B}_f$ such that
$$ \dim \mathcal{B}_f \leq n-2 \text{ and } \dim f(\mathcal{B}_f)\leq n-2. $$
\end{prop}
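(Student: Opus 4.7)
The plan is to develop a local degree theory for discrete open maps between topological $n$-manifolds, and then use it to control the branch set both upstairs in $M$ and downstairs in $N$. I would first associate to each point $x \in M$ a local index $i(x, f) \in \mathbb{Z}_{>0}$, show that $f$ is a local homeomorphism at $x$ exactly when $i(x, f) = 1$, and finally deduce the dimension bounds from the separation properties of $\mathcal{B}_f$.

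First, I would exploit discreteness to select, for each $x \in M$, a \emph{normal neighborhood}: a connected open set $U \ni x$ with compact closure, $\overline{U} \cap f^{-1}(f(x)) = \{x\}$, and $f(\partial U) = \partial f(U)$. Such $U$ exist because $f$ is discrete, open, and continuous (one shrinks an arbitrary neighborhood and takes the component of $f^{-1}(V)$ containing $x$ for $V$ small). Then $f|_U : U \to f(U)$ is a proper map between open subsets of $n$-manifolds, so local \v{C}ech (or singular) homology gives a well-defined local index $i(x, f) := \deg(f|_U, f(x))$, independent of the choice of normal $U$. The central analytic step is the characterization: $i(x, f) = 1$ if and only if $f$ is a local homeomorphism at $x$. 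One direction is immediate; for the converse I would combine invariance of domain with a path-lifting argument in the spirit of Lemma \ref{lem:pathlifting} to show that index one forces $f|_U$ to be injective after shrinking $U$. Upper semi-continuity of $i(\cdot, f)$ is then a direct consequence of the homotopy invariance of local degree, and this immediately implies that $\mathcal{B}_f = \{x : i(x,f) \geq 2\}$ is closed.

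The main obstacle is the dimension estimate $\dim \mathcal{B}_f \leq n-2$. I would argue by contradiction: if some point $x_0 \in \mathcal{B}_f$ had the property that $\mathcal{B}_f$ has dimension at least $n-1$ in every neighborhood of $x_0$, then by classical dimension theory for closed subsets of $n$-manifolds, $\mathcal{B}_f$ would locally separate $M$ near some nearby point. Choosing a normal neighborhood $U$ of such a point with image $V = f(U)$, I would compare the finite decomposition of $U \setminus \mathcal{B}_f$ into sheets over components of $V \setminus f(\mathcal{B}_f \cap U)$ (each a covering map of degree equal to the locally constant value of $i(\cdot, f)$) with an Alexander duality / Mayer-Vietoris computation. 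Separation by $\mathcal{B}_f$ would force these sheets to reassemble in a way incompatible with the global constancy of $\deg(f|_U)$, giving the desired contradiction. This is the delicate part and is essentially the content of \v{C}ernavskii's and V\"ais\"al\"a's original arguments.

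Finally, the bound $\dim f(\mathcal{B}_f) \leq n-2$ follows from the bound on $\dim \mathcal{B}_f$ together with discreteness: restricted to any normal neighborhood, $f|_{\mathcal{B}_f}$ is a finite-to-one closed map onto its image, and such maps do not raise topological dimension (Hurewicz's theorem on dimension-preserving maps). Covering $\mathcal{B}_f$ by countably many normal neighborhoods and applying the countable sum theorem for dimension then yields the global bound.
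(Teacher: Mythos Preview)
The paper does not prove this proposition at all: it is quoted as Theorem 5.4 of V\"ais\"al\"a's 1966 paper and used as a black box. The only additional comment the paper makes is the remark immediately following the statement, which notes that the image bound $\dim f(\mathcal{B}_f)\leq n-2$ follows from \cite{CH60}, Lemma 2.1, given discreteness, openness, and closedness of $\mathcal{B}_f$; this is exactly the content of your final paragraph (finite-to-one closed maps do not raise topological dimension, plus the countable sum theorem).

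Your outline is a reasonable high-level sketch of the \v{C}ernavskii--V\"ais\"al\"a argument, so in that sense there is nothing to ``compare'' it to in this paper. Two small cautions about your sketch itself. First, the appeal to ``a path-lifting argument in the spirit of Lemma \ref{lem:pathlifting}'' to go from $i(x,f)=1$ to local injectivity is not quite right: Lemma \ref{lem:pathlifting} is specific to Lipschitz quotient maps on proper spaces, whereas here you only have a continuous discrete open map, so the actual mechanism is purely topological (properness of $f|_U$ plus invariance of domain and the additivity of local degree). Second, the ``delicate part'' you flag---that $\dim \mathcal{B}_f\geq n-1$ would force local separation incompatible with constancy of degree---is genuinely the heart of V\"ais\"al\"a's proof and requires more than the one-sentence Alexander duality / Mayer--Vietoris gesture you give; but you correctly identify it as the crux and correctly defer to the original source.
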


\begin{rmk}
In the statement of Proposition \ref{prop:vaisala}, since $f$ is discrete and open and the branch set $\mathcal{B}_f$ is closed, the inequality $\dim f(\mathcal{B}_f)\leq n-2$ follows from \cite{CH60}, Lemma 2.1.
\end{rmk}

As Proposition \ref{prop:vaisala} will allow us to find locations where our blowup mappings are injective LQ mappings, we now analyze those locations further. Recall that a metric space is called \textit{geodesic} if every two points can be joined by a curve whose length is equal to the distance between the points.

\begin{lemma}\label{lem:injectivebilip}
Let $X$ be a proper metric space and let $Y$ be a geodesic metric space. Let $F:X\rightarrow Y$ be $L$-LQ, and suppose that $F$ is injective on $B(x,r)$ for some $x\in X, r>0$.

Then $F$ is $L$-bilipschitz on $B(x,r/(1+2L^2))$.
\end{lemma}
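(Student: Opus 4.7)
The plan is straightforward: the upper Lipschitz bound is free from the LQ hypothesis, so the whole content is to establish the lower bound $d(F(y), F(z)) \geq d(y,z)/L$ for $y, z$ in the smaller ball $B(x, r/(1+2L^2))$. The idea is to use Lemma \ref{lem:pathlifting} to lift a geodesic from $F(y)$ to $F(z)$, show that the lift cannot escape $B(x,r)$, and then use injectivity on $B(x,r)$ to force the lift's endpoint to equal $z$.

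In more detail, fix $y, z \in B(x, r/(1+2L^2))$ and set $s = d(F(y), F(z))$. Since $Y$ is geodesic, I can parametrize a shortest path from $F(y)$ to $F(z)$ as a $1$-Lipschitz curve $\gamma \colon [0, s] \to Y$. Lemma \ref{lem:pathlifting} then yields an $L$-Lipschitz lift $\tilde\gamma \colon [0, s] \to X$ with $\tilde\gamma(0) = y$ and $F \circ \tilde\gamma = \gamma$; in particular $F(\tilde\gamma(s)) = F(z)$. The key quantitative check is that $\tilde\gamma$ stays in $B(x, r)$: for any $t \in [0, s]$,
\[
d(\tilde\gamma(t), x) \leq d(\tilde\gamma(t), y) + d(y, x) \leq L s + \frac{r}{1+2L^2},
\]
and using $s \leq L\, d(y,z) < L \cdot \tfrac{2r}{1+2L^2}$ (since $F$ is $L$-Lipschitz and $y, z \in B(x, r/(1+2L^2))$) the right-hand side is strictly less than $\tfrac{2L^2 r}{1+2L^2} + \tfrac{r}{1+2L^2} = r$. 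Hence the entire lift lies in $B(x, r)$.

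Since $z \in B(x, r)$ as well and $F$ is injective there, the equality $F(\tilde\gamma(s)) = F(z)$ forces $\tilde\gamma(s) = z$. The $L$-Lipschitz property of $\tilde\gamma$ then gives $d(y, z) \leq L s = L\, d(F(y), F(z))$, which is precisely the desired lower bound. Combined with the $L$-Lipschitz upper bound, this proves that $F|_{B(x, r/(1+2L^2))}$ is $L$-bilipschitz.

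I don't anticipate a serious obstacle here; the only subtlety is getting the numerology right so that $L s + d(y,x) < r$ strictly, which is what dictates the specific constant $1 + 2L^2$ in the radius of the smaller ball. Everything else is a direct application of the path lifting lemma and injectivity.
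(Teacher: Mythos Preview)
Your proposal is correct and follows essentially the same approach as the paper: lift a geodesic in $Y$ via Lemma~\ref{lem:pathlifting}, check that the lift (or at least its endpoint) lies in $B(x,r)$ using the numerology $Ls + r/(1+2L^2) < r$, and conclude $\tilde\gamma(s)=z$ by injectivity. The only cosmetic difference is that you verify the entire lift stays in $B(x,r)$ while the paper only checks the endpoint, but the estimate is identical.
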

\begin{proof}
As an $L$-LQ mapping, $F$ is automatically $L$-Lipschitz. Consider distinct points $p,q\in B(x,r/(1+2L^2))$. Let $T=d(F(p),F(q))\leq \frac{Lr}{1+2L^2}$.

Let $\gamma:[0,T]\rightarrow Y$ parametrize by arc length a geodesic from $F(p)$ to $F(q)$. Then by Lemma \ref{lem:pathlifting} there is a $L$-Lipschitz curve $\tilde{\gamma}:[0,T]\rightarrow X$ such that $\tilde{\gamma}(0)=p$ and $F\circ \tilde{\gamma} = \gamma$. 

Since $\tilde{\gamma}$ is $L$-Lipschitz, we have that
$$ \diam(\tilde{\gamma}) \leq LT \leq \frac{L^2r}{1+2L^2}$$
and therefore
$$ d(\tilde{\gamma}(T),x) \leq \frac{r}{1+2L^2} + \frac{L^2r}{1+2L^2} < r.$$

Therefore $\tilde{\gamma}(T) \in B(x,r)$ and $F(\tilde{\gamma}(T))=\gamma(T)=F(q)$. Since $F$ is injective on $B(x,r)$, it follows that
$$\tilde{\gamma}(T) = q$$
and so
$$ d(p,q) = d(\tilde{\gamma}(0), \tilde{\gamma}(T))\leq  LT \leq L d(F(p),F(q)). $$
\end{proof}

\section{Convergence and closeness}\label{sec:close}
We introduced the notions of Gromov-Hausdorff convergence and blowups in Section \ref{sec:prelims}. Closely related to this type of convergence are the following notions of closeness between spaces.

\begin{definition}
Let $M$ and $N$ be metric spaces and fix $\eta>0$. We call a (not necessarily continuous) mapping $f:M\rightarrow N$ an \textit{$\eta$-isometry} if
$$ |d_N(f(x), f(y)) - d_M(x,y)| \leq \eta \text{ for all } x,y\in M. $$
\end{definition}
In other contexts, such mappings are also sometimes called  $(1,\eta)$-quasi-isometric embeddings or $(1,\eta)$-Hausdorff approximations.

\begin{definition}\label{def:close}
Let $(M,p)$ and $(N,q)$ be pointed metric spaces and let $t>0$, $\epsilon\in (0,1/10)$.  We will say that $(M,p)$ and $(N,q)$ are \textit{$\epsilon$-close at scale $t$} if there exist $\epsilon t $-isometries
\begin{equation}\label{eqn:close1}
f:B(p,t/\epsilon) \rightarrow N \text{ and } g:B(q,t/\epsilon) \rightarrow M
\end{equation}
such that $d(f(p),q) \leq \epsilon t$, $d(g(q),p) \leq \epsilon t$, and furthermore
\begin{equation}\label{eqn:close2}
d(f(g(y)),y) \leq \epsilon t \text{ and } d(g(f(x)), x) \leq \epsilon t
\end{equation}
for all $y\in B(q,t/2\epsilon)$ and $x\in B(p,t/2\epsilon)$.

If $\phi:M\rightarrow\RR^k$ and $\psi:N\rightarrow\RR^k$ are Lipschitz, we will say that the triples $(M,p,\phi)$ and $(N,q,\psi)$ are \textit{$\epsilon$-close at scale $t$} if the above holds and in addition
$$ |\phi \circ g - \psi| \leq \epsilon t, |\psi\circ f - \phi| \leq \epsilon t$$
everywhere on $B(q,t/\epsilon)$ and $B(p,t/\epsilon)$, respectively. 
\end{definition} 

\begin{rmk}\label{rmk:GHclose}
If $(X_n,p_n,\phi_n)$ is a sequence of triples converging in the pointed Gromov-Hausdorff sense to $(X,p,\phi)$, then for all $R,\epsilon>0$, there exists $N\in\mathbb{N}$ such that $(X_n, p_n, \phi_n)$ is $\epsilon$-close to $(X,p,\phi)$ at scale $R$, for all $n\geq N$. See, for example, Lemmas 8.11 and 8.19 of \cite{DS97} or Definition 8.1.1 of \cite{BBI01}
\end{rmk}

We collect some other simple observations about closeness below.
\begin{lemma}\label{lem:closeness}
Let $(M,p,\phi)$ and $(N,q,\psi)$ be $\epsilon$-close at scale $t$, with mappings $f$ and $g$ as in Definition \ref{def:close}. Fix $\lambda\in (0,1/2]$.
\begin{enumerate}[(a)]
\item \label{closeness1} If $B(x,\lambda t)\subseteq B(p,t)$, then $(M,x,\phi)$ and $(N,f(x),\psi)$ are $\frac{\epsilon}{\lambda}$-close at scale $\lambda t$.
\item \label{closeness2} If in addition $(N,q,\psi)$ and $(N',q',\psi')$ are $\delta$-close at scale $t$, then $(M,p,\phi)$ and $(N',q',\psi')$ are $2(\epsilon+\delta)$-close at scale $t$.
\end{enumerate}
\end{lemma}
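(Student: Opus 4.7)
Both parts of the lemma are essentially bookkeeping exercises built directly on Definition \ref{def:close}; there is no deep idea involved, only a careful verification of domain inclusions and accumulated error terms. I will treat each part separately.

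\textbf{For part (\ref{closeness1}).} The natural candidates are the restrictions of the given $f$ and $g$ to the appropriate balls around $x$ and $f(x)$, with the new scale being $\lambda t$ and the new closeness parameter being $\epsilon/\lambda$. So I would first check that these restrictions make sense: concretely, that
\[
B\!\left(x,\tfrac{\lambda t}{\epsilon/\lambda}\right)=B\!\left(x,\tfrac{\lambda^2 t}{\epsilon}\right)\subseteq B(p,t/\epsilon),
\]
and similarly $B(f(x),\lambda^2 t/\epsilon)\subseteq B(q,t/\epsilon)$. Both follow from the triangle inequality together with $d(x,p)<t$ (which holds because $x\in B(x,\lambda t)\subseteq B(p,t)$) and the estimate $d(f(x),q)\leq d(x,p)+2\epsilon t<t+2\epsilon t$, using only that $\lambda\le 1/2$ and $\epsilon<1/10$ to absorb the error. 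The almost-isometry bound of $\epsilon t=(\epsilon/\lambda)(\lambda t)$ is inherited verbatim from the original maps. The base-point conditions are trivial: $d(f(x),f(x))=0$ and $d(g(f(x)),x)\le \epsilon t$ follows from (\ref{eqn:close2}), again after checking $x\in B(p,t/2\epsilon)$. The composition conditions on the smaller balls $B(x,\lambda^2 t/(2\epsilon))$, $B(f(x),\lambda^2 t/(2\epsilon))$, and the $\phi$--$\psi$ bound, are all inherited the same way.

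\textbf{For part (\ref{closeness2}).} Denote by $(f_1,g_1)$ the pair realizing $\epsilon$-closeness of $(M,p,\phi)$ and $(N,q,\psi)$, and by $(f_2,g_2)$ the analogue for $(N,q,\psi)$ and $(N',q',\psi')$. The natural candidates are the compositions
\[
F:=f_2\circ f_1,\qquad G:=g_1\circ g_2.
\]
I would proceed in three steps. First, verify that for $x\in B(p,t/(2(\epsilon+\delta)))$ the point $f_1(x)$ lies in the domain $B(q,t/\delta)$ of $f_2$, so that $F(x)$ is defined; the computation $d(f_1(x),q)\le d(x,p)+2\epsilon t$ combined with $\epsilon,\delta<1/10$ is enough. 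The symmetric statement for $G$ is identical. Second, use the triangle inequality to combine the two almost-isometry properties, giving
\[
|d(F(x),F(y))-d(x,y)|\leq \delta t + \epsilon t\leq 2(\epsilon+\delta)t,
\]
and similarly for $G$. Third, verify the composition and $\phi$--$\psi$ conditions by a telescoping argument: for the composition, for example,
\[
d(G(F(x)),x)\leq d(g_1(g_2(f_2(f_1(x)))),g_1(f_1(x)))+d(g_1(f_1(x)),x),
\]
where the first term is controlled by $\delta t +\epsilon t$ (using that $g_1$ is an $\epsilon t$-isometry, hence Lipschitz with additive slack) and the second by $\epsilon t$, for a total of $\leq 2(\epsilon+\delta)t$. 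The $\phi$--$\psi'$ bound follows by inserting $\psi\circ g_2$ as an intermediate term.

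\textbf{Anticipated obstacle.} There is no conceptual obstacle; the entire difficulty is arithmetic. The only point requiring care is keeping track of which balls each successive composition is allowed to be applied on, since each step slightly enlarges the basepoint--image distance. The slack of a factor of $2$ in (\ref{closeness2}), together with the hypotheses $\lambda\le 1/2$ and $\epsilon,\delta<1/10$, leaves ample room, so all inclusions hold.
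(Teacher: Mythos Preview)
Your proposal is correct and follows exactly the approach of the paper: reuse the same maps $f,g$ for part (\ref{closeness1}) and compose the given almost-isometries for part (\ref{closeness2}), checking domain inclusions and error accumulation via the triangle inequality and the hypotheses $\lambda\le 1/2$, $\epsilon,\delta<1/10$. The paper's proof is in fact just a two-sentence sketch of precisely this strategy, so your write-up is if anything more detailed than what appears there.
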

\begin{proof}
For \eqref{closeness1}, we simply use the same mappings $f$ and $g$ that are provided by the fact that $(M,p,\phi)$ and $(N,q,\psi)$ are $\epsilon$-close at scale $t$. We now consider these as mappings between $(M,x,\phi)$ and $(N,f(x),\psi)$. The only aspect of Definition \ref{def:close} that is not immediately obvious is the fact that $f$ and $g$ are defined on $B(x,(\lambda t)/(\epsilon/\lambda))$ and $B(f(x),(\lambda t)/(\epsilon/\lambda))$, respectively. This follows from the triangle inequality and the assumptions that $\epsilon\leq 1/10$, $\lambda\leq 1/2$.

For \eqref{closeness2}, one may compose the relevant $\epsilon$-isometries, and check that the resulting maps satisfy Definition \ref{def:close} using the triangle inequality and the assumptions that $\epsilon, \delta \leq 1/10$.
\end{proof}

It is convenient for topological arguments to have a continuous version of closeness.

\begin{definition}
Let $M$ and $N$ be metric spaces and fix $\eta>0$. We say that continuous maps $f,g:M\rightarrow N$ between metric spaces are \textit{$\eta$-homotopic} if they are homotopic by a homotopy $H:M\times[0,1]\rightarrow N$ such that
$$ d_N (f(x), H(x,t)) \leq \eta $$
for all $x\in M$ and $t\in [0,1]$.
\end{definition}

\begin{definition}\label{def:ctsclose}
Let $(M,p)$ and $(N,q)$ be pointed metric spaces and let $t>0$, $\epsilon\in (0,1/2)$.  We will say that $(M,p)$ and $(N,q)$ are \textit{continuously $\epsilon$-close at scale $t$} if there exist continuous $\epsilon t $-isometries
$$ f:B(p,t/\epsilon) \rightarrow N \text{ and } g:B(q,t/\epsilon) \rightarrow M$$
such that
$$ d(f(p), q) \leq \epsilon t \text{ and } d(g(q),p)\leq \epsilon t, $$
and
$$ g\circ f |_{B(p,t/\epsilon)} $$
is $\epsilon t$-homotopic to the inclusion $B(p,t/\epsilon) \rightarrow M$, and similarly for $f\circ g$.

If $\phi:M\rightarrow\RR^k$ and $\psi:N\rightarrow\RR^k$ are Lipschitz, we will say that the triples $(M,p,\phi)$ and $(N,q,\psi)$ are \textit{continuously $\epsilon$-close at scale $t$} if the above holds and in addition
$$ |\phi \circ g - \psi| \leq \epsilon t, |\psi\circ f - \phi| \leq \epsilon t$$
where defined.
\end{definition}

The following result is useful for connecting closeness and continuous closeness.
\begin{lemma}\label{lem:semmes}
Fix $A, L\geq 1$. Then there is a constant $\Lambda=\Lambda(A,L)\geq 1$ with the following property.

Let $(M,p,\phi)$ and $(N,q,\psi)$ be triples such that $(M,p)$ and $(N,q)$ are pointed $A$-LC topological planes, and $\phi,\psi$ are $L$-Lipschitz. If $\epsilon\in (0,(10\Lambda)^{-1})$ and $t>0$, and if $(M,p,\phi)$ and $(N,q,\psi)$ are $\epsilon$-close at scale $t$, then they are continuously $\Lambda \epsilon$-close at scale $t$.
\end{lemma}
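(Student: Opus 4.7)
The plan is to produce continuous approximants $\tilde{f}$ and $\tilde{g}$ of the $\epsilon t$-isometries $f$ and $g$ supplied by Definition \ref{def:close} via a nerve construction, using the $A$-LC property of $M$ and $N$ to extend across simplices, and then to verify the conditions of Definition \ref{def:ctsclose}.

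After rescaling, we may assume $t = 1$. Let $\{x_i\}$ be a maximal $\epsilon$-separated set in $B(p, 2/\epsilon)$, and let $\mathcal{U} = \{U_i\}$ with $U_i = B(x_i, 2\epsilon)$; this is an open cover of $B(p, 1/\epsilon)$. Choose a Lipschitz partition of unity $\{\rho_i\}$ subordinate to $\mathcal{U}$, and let $\Phi\colon B(p, 1/\epsilon) \to |\mathcal{N}(\mathcal{U})|$ be the induced map into the geometric realization of the nerve. On vertices, define $F(x_i) := f(x_i)$. For each $k$-simplex $\sigma = [x_{i_0}, \dots, x_{i_k}]$ of $\mathcal{N}(\mathcal{U})$, the $U_{i_j}$ share a common point, so the $x_{i_j}$ are pairwise within $4\epsilon$ in $M$ and hence the $f(x_{i_j})$ are pairwise within $5\epsilon$ in $N$. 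Extending $F$ inductively over the skeleta, and using the $A$-LC property of $N$ together with the fact that $N$ is contractible (being homeomorphic to $\RR^2$), one can arrange at each step that $F|_\sigma$ takes values in a ball of radius $O(A^k \epsilon)$ in $N$. Set $\tilde{f} := F \circ \Phi$, and construct $\tilde{g}$ analogously from $g$.

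The verification of Definition \ref{def:ctsclose} is then routine once one has the uniform estimate $\sup_x d_N(\tilde{f}(x), f(x)) \lesssim A^d \epsilon$, where $d$ is a bound on the dimension of simplices appearing in the image of $\Phi$: continuity of $\tilde{f}$ is built in, the $\Lambda \epsilon$-isometry property and base point estimate follow from the triangle inequality applied to the corresponding properties of $f$, and the condition $\sup |\psi \circ \tilde{f} - \phi| \leq \Lambda \epsilon$ follows from the corresponding condition for $f$ combined with the $L$-Lipschitz continuity of $\psi$. For the homotopy condition, $\tilde{g} \circ \tilde{f}$ stays within $O(A^d \epsilon)$ of the identity on $B(p, 1/\epsilon)$; covering that ball by small balls and contracting each image inside an $A$-enlargement via $A$-LC of $M$ produces the required $\Lambda \epsilon$-homotopy to the inclusion, and the symmetric argument handles $\tilde{f} \circ \tilde{g}$.

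The main obstacle is controlling the nerve dimension $d$ (equivalently, the multiplicity of $\mathcal{U}$) purely in terms of $A$, since Lemma \ref{lem:semmes} makes no a priori metric doubling assumption on $M$ or $N$. The resolution exploits the fact that $M$ and $N$ are topological planes: one refines $\mathcal{U}$ to a cover of bounded multiplicity (a ``triangulation-like'' refinement adapted to the 2-dimensional topology and produced with constants depending only on $A$) so that only simplices of dimension at most $2$ contribute to $F \circ \Phi$. Once this is secured, the factor $A^d$ becomes $O(A^2)$, and one obtains the desired $\Lambda = \Lambda(A, L)$.
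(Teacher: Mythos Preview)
Your overall strategy---induction on skeleta using the $A$-LC property---is exactly the approach the paper takes, and the verification steps you outline (the $\epsilon t$-isometry estimate, the base-point estimate, the $\psi\circ\tilde f$ estimate via the $L$-Lipschitz bound) are the right ones. However, there are two places where your execution diverges from the paper's and where your argument, as written, has gaps.

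First, the nerve dimension. The paper bypasses this issue entirely by triangulating $M$ directly: since $M$ is a topological plane it carries a triangulation with triangles of diameter at most $\epsilon t$, and one simply extends $f$ from the $0$-skeleton to the $1$-skeleton to the $2$-skeleton of that triangulation. The dimension is automatically $2$ and no doubling hypothesis is needed. Your nerve route forces you to confront the multiplicity of the ball cover, and your proposed fix (``a triangulation-like refinement adapted to the 2-dimensional topology'') is not spelled out; if made precise it essentially reproduces the paper's direct triangulation, so the nerve apparatus becomes a detour rather than an alternative.

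Second, and more seriously, your homotopy construction is not correct as stated. Knowing that $\tilde g\circ\tilde f$ is uniformly $O(A^d\epsilon)$-close to the identity does \emph{not} by itself produce a continuous homotopy between them: ``covering by small balls and contracting each image inside an $A$-enlargement'' gives only local nullhomotopies of the displacement, with no mechanism for gluing them into a single continuous map on $B(p,t/\epsilon)\times[0,1]$. The paper handles this by triangulating $M\times[0,1]$ (taking products of the triangles in $\mathcal{T}$ with $[0,1]$ and subdividing into simplices with no new vertices), defining $H$ to agree with the identity on $M\times\{0\}$ and with $\tilde g\circ\tilde f$ on $M\times\{1\}$, and then extending $H$ over the interior $1$-, $2$-, and $3$-skeleta via successive applications of $A$-LC, just as in the construction of $\tilde f$. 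This skeleton-by-skeleton extension on the product is the step missing from your sketch.
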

\begin{proof}
This follows from the general ``induction on skeleta'' arguments of \cite{Pe90} or \cite[Section 5]{Se96}. Here we give a direct proof based on these methods, while making no attempt to optimize constants. The idea is simply to triangulate $M$ and $N$ and use the linear contractibility to continuously extend the coarse mappings $f$ and $g$ from Definition \ref{def:close} successively from the vertices of the triangulation to the edges and then to the faces. To verify the homotopy inverse portion of Definition \ref{def:ctsclose}, one does a similar process on $M\times [0,1]$ and $N\times [0,1]$.

Let $f:B(p,t/\epsilon) \rightarrow N$ and $g:B(q,t/\epsilon) \rightarrow M$ be the mappings as in Definition \ref{def:close}.

Fix a triangulation $\mathcal{T}$ of $M$ such that each triangle $T\in\mathcal{T}$ has diameter at most $\epsilon t$. Let $\mathcal{T}_\epsilon$ be the collection of triangles in $\mathcal{T}$ that intersect $B(p,t/2\epsilon)$.

Let $f_0$ be the restriction of $f$ to the $0$-skeleton of $\mathcal{T}_\epsilon$. Note that if $x$ and $y$ are adjacent points in the $0$-skeleton of $\mathcal{T}_\epsilon$, then $d(f(x),f(y))\leq 3\epsilon t$.

The $A$-LC property of $N$ allows us to extend $f_0$ to a continuous map $f_1$ from the $1$-skeleton of $\mathcal{T}_\epsilon$ into $N$ with the property that
$$ \diam(f_1(\partial T)) \leq 9A\epsilon t \text{ for each } T\in\mathcal{T}_\epsilon.$$
A second application of the $A$-LC property of $N$ allows us to extend $f_1$ to a continuous map $f_2$ from
$$ \cup_{T\in\mathcal{T}_\epsilon} T \supset B(p,t/2\epsilon).$$
into $N$, with the property that
\begin{equation}\label{eqn:f2diam}
\diam(f_2(T)) \leq 18A^2\epsilon t \text{ for each } T\in\mathcal{T}_\epsilon.
\end{equation}

It follows from \eqref{eqn:f2diam} that the continuous map $f_2$ satisfies
\begin{equation}\label{eqn:ff2}
 d(f(x), f_2(x)) \leq 20A^2\epsilon t \text{ for all } x\in B(p, t/2\epsilon). 
\end{equation}

By the same method, we can find a continuous map $g_2:B(q,t/2\epsilon) \rightarrow M$ such that
\begin{equation}\label{eqn:gg2}
d(g(y), g_2(y)) \leq 20A^2\epsilon t \text{ for all } y\in B(q, t/2\epsilon). 
\end{equation}

It then follows from \eqref{eqn:ff2} and \eqref{eqn:gg2}, and the fact that $\phi$ and $\psi$ are $L$-Lipschitz, that
$$ |\phi \circ g_2 - \psi| \leq 21A^2L\epsilon t \text{ and } |\psi\circ f_2 - \phi| \leq 21A^2L\epsilon t$$
where defined.

We now argue that
$$ h = g_2\circ f_2 |_{B(p,t/2\epsilon)} $$
is $C\epsilon t$-homotopic to the inclusion $B(p,t/2\epsilon) \rightarrow M$, for some $C$ depending only on $A$. Of course, the same argument will show that $ \tilde{h} = f_2\circ g_2 |_{B(q,t/2\epsilon)} $ is $C\epsilon t$-homotopic to the inclusion $B(q,t/\epsilon) \rightarrow N$.

Observe that a simple triangle inequality calculation using \eqref{eqn:ff2} and \eqref{eqn:gg2} and the properties of $f$ and $g$ shows that
$$ d(h(x), x) \leq 42A^2\epsilon t \text{ for all } x\in B(p,t/2\epsilon).$$

Recall the previously defined triangulation $\mathcal{T}$ in $M$ and the collection $\mathcal{T}_\epsilon$ inside it. These yield triangulations of $M\times\{0\}$ and $M\times\{1\}$. We can then obtain a triangulation $\mathcal{S}$ of $M\times [0,1]$ with no additional vertices by simply triangulating each product $T\times [0,1]$ for triangles $T\in \mathcal{T}$. Note that, under this construction, if $S$ is a simplex of $\mathcal{S}$ that intersects $B(p,t/2\epsilon) \times [0,1]$, then
$$ S \cap (M\times \{0,1\}) \subseteq \left(\bigcup_{T\in\mathcal{T}_\epsilon} T \right)\times\{0,1\}.$$

Let $\mathcal{S}_\epsilon$ be the collection of simplices in $\mathcal{S}$ that intersect $B(p,t/2\epsilon) \times [0,1]$.

We define a map $H_1$ from the $1$-skeleton $\mathcal{S}^1_\epsilon$ of $\mathcal{S}_\epsilon$ to $M$ as follows: On edges of $\mathcal{S}_\epsilon$ in $M\times \{0\}$, $H_1$ agrees with the identity.  On edges of $\mathcal{S}_\epsilon$ in $M\times \{1\}$, $H_1$ agrees with $h$. On each remaining edge, we extend $H_1$ continuously from its values at the endpoints, using the $A$-LC property of $M$. Then for each edge $e$ of $\mathcal{S}_\epsilon^1$ between points $x\times a$ and $y\times b$ in $\mathcal{S}_\epsilon^0$ ($x,y\in M$, $a,b\in \{0,1\}$), we have
$$ \diam(H_1(e)) \leq 2A d(H_1(x),H_1(y)) \leq 84A^3 \epsilon t. $$
We have now defined  $H_1$ on the $1$-skeleton of $\mathcal{S}_\epsilon$. We now extend to a map $H_2$ on the $2$-skeleton of $\mathcal{S}_\epsilon$. For each face of $\mathcal{S}_\epsilon$ in $M\times\{0\}$, define $H_2$ by the identity and for each face of $\mathcal{S}_\epsilon$ in $M\times \{1\}$,define $H_2$ by $h$; note that this continuously extends $H_1$. On each remaining face of $\mathcal{S}_\epsilon$, we define $H_2$ as an extension of $H_1$ using the $A$-LC property of $M$. Since the image of each edge of $\mathcal{S}_\epsilon$ under $H_1$ has diameter at most $84A^3 \epsilon t$, the image of each face of $\mathcal{S}_\epsilon$ under $H_2$ has diameter at most $336A^4 \epsilon t$.

Finally, we extend $H_2$ to a map $H$ on the union of simplices of $\mathcal{S}_\epsilon$, again using the $A$-LC property of $M$. The image of each simplex under $H$ has diameter at most $1344A^5\epsilon t$.

Since the union of simplices of $\mathcal{S}_\epsilon$ contains all of $B(p,t/2\epsilon)\times [0,1]$, a restriction of $H$ is a homotopy between $h$ and the inclusion of $B(p,t/2\epsilon)$ into $M$. 

In addition, since $\diam(H(S))\leq 1344A^5\epsilon t$ for each simplex $S$ of $\mathcal{S}_\epsilon$, it follows that 
$$ d(H(x,t), x) \leq 1344A^5\epsilon t$$
for all $(x,t)\in B(p,t/2\epsilon)\times [0,1]$.

This proves that $h= g_2\circ f_2 |_{B(p,t/2\epsilon)}$ is $1344A^5\epsilon t$-homotopic to the inclusion map of $B(p,t/2\epsilon)$ into $M$. The same argument proves the analogous statement for $f_2\circ g_2|_{B(q,t/2\epsilon)}$

Therefore, choosing $\Lambda \geq \max( 1344A^5, 21A^2L)$ completes the proof of the lemma.
\end{proof}

\section{Topological lemmas}\label{sec:topology}
We first make the following simple observation about $A$-LC spaces.
\begin{lemma}\label{lem:balldomain}
Let $X$ be a proper $A$-LC space. Fix $x\in X$ and $r>0$. Then there is a connected open set $U$ and a connected compact set $K$ such that
\begin{equation}\label{eqn:openballdomain}
B(x,r) \subseteq U \subseteq B(x, 2Ar)
\end{equation}
and
\begin{equation}\label{eqn:closedballdomain}
B(x,r) \subseteq K \subseteq B(x, 2Ar).
\end{equation}
\end{lemma}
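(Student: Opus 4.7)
The plan is to extract both $U$ and $K$ from a single contracting homotopy supplied by the $A$-LC condition. Since $X$ is $A$-LC, there is a continuous map $H : B(x,r) \times [0,1] \to B(x, Ar)$ with $H(\cdot, 0) = \mathrm{id}$ and $H(\cdot, 1) \equiv y_0$ for some $y_0 \in B(x, Ar)$. Let $V = H(B(x,r) \times [0,1])$. Then $B(x,r) \subseteq V \subseteq B(x, Ar)$, and $V$ is path-connected, since every point $H(y,t) \in V$ is joined to $y_0$ inside $V$ by the path $s \mapsto H(y, (1-s)t + s)$.

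For the compact set, I would take $K = \overline{V}$. Because $V$ is contained in the bounded set $B(x, Ar)$ and $X$ is proper, $K$ is compact; closure preserves connectedness, so $K$ is connected. Moreover $K \subseteq \{y \in X : d(y,x) \leq Ar\} \subset B(x, 2Ar)$ (using $A, r > 0$), and $B(x,r) \subseteq V \subseteq K$ gives the required containment.

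For the open set, I would first observe that the $A$-LC condition implies that path-components of open subsets of $X$ are themselves open. Indeed, applying the definition to $B(y, \epsilon/(2A))$ for any $y \in X$ and $\epsilon > 0$ produces (by the same image-of-homotopy construction above) a path-connected subset of $B(y, \epsilon/2)$ that contains the open neighborhood $B(y, \epsilon/(2A))$ of $y$; this supplies a small path-connected neighborhood of $y$ inside any prescribed open set $W$, which forces the path-component of $W$ containing $y$ to be open. Granting this, I would set $U$ to be the path-component of $B(x, 2Ar)$ containing $x$. Then $U$ is open, connected, and contained in $B(x, 2Ar)$ by construction. To see $B(x,r) \subseteq U$: for each $y \in B(x,r)$ the path $t \mapsto H(y,t)$ joins $y$ to $y_0$ inside $B(x, Ar) \subset B(x, 2Ar)$, and similarly $t \mapsto H(x,t)$ joins $x$ to $y_0$ in the same ball, so $y$ and $x$ lie in the same path-component of $B(x, 2Ar)$.

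I do not anticipate a real obstacle; the only step that requires any thought beyond unwinding definitions is the observation that path-components of open sets in an $A$-LC space are open, and that follows directly from re-applying the contractibility condition at arbitrarily small scales.
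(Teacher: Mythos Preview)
Your proposal is correct and follows essentially the same idea as the paper: extract both sets from the image of the contracting homotopy supplied by the $A$-LC condition. The paper's construction of $K$ is marginally more direct---it applies the contraction of $B(x,2r)$ inside $B(x,2Ar)$ to the compact set $\overline{B}(x,r)\times[0,1]$, so compactness is immediate without taking a closure or invoking properness---and for $U$ the paper simply cites an external reference (Lemma~2.11 of \cite{GCD16}), whereas you supply a self-contained argument via path-components of open sets.
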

\begin{proof}
The existence of $U$ satisfying \eqref{eqn:openballdomain} is shown in Lemma 2.11 of \cite{GCD16}.

To find a continuum $K$ as in \eqref{eqn:closedballdomain}, consider the homotopy $H$ which contracts $B(x,2r)$ in $B(x, 2Ar)$ and set
$$ K = H(\overline{B}(x,r) \times [0,1]).$$
\end{proof}

Now let $X$ and $Y$ be homeomorphic to $\RR^n$ and let $f:X\rightarrow Y$ be a proper continuous mapping. (Here \textit{proper} means that $f^{-1}(K)$ is compact in $X$ whenever $K$ is compact in $Y$.) Then $f$ extends naturally to a continuous mapping between the one-point compactifications of $X$ and $Y$, which are homeomorphic to $S^n$.

For a domain $D\subset X$, and a point $y\in  Y\setminus f(\partial D)$, we can therefore use $\mu(y,D,f)$ to denote the local degree of $f$, as defined on p. 16 of \cite{Ri93}. That is, if $f$ is a continuous map from a domain $D\subset X \subset S^n$ into $Y \subset S^n$ and $y\notin f(\partial D)$, then $\mu(y,D,f)$ is defined by considering the following sequence of induced mappings on singular homology of pairs:
\begin{equation*}
\begin{tikzcd}[column sep = small]
H_n(S^n) \arrow{r}{j{*}} & H_n(S^n, S^n \setminus (D\cap f^{-1}(y))) & H_n(D, D\setminus f^{-1}(y)) \arrow{l}{e_{*}} \arrow{r}{f_{*}} & H_n(S^n, S^n\setminus \{y\})  & H_n(S^n) \arrow{l}{k_{*}}
\end{tikzcd}
\end{equation*}

Here $j$, $e$, and $k$ are inclusions. The homomorphism $e_*$ is an isomorphism by excision, and $k_*$ is an isomorphism because $S^n\setminus\{y\}$ is homologically trivial. There is an integer $\mu$ such that the homomorphism $k_*^{-1} f_* e_*^{-1} j_*$ sends each $\alpha\in H_n(S^n)$ to a multiple $\mu \alpha \in H_n(S^n)$. This integer $\mu$ is the local degree $\mu(y,D,f)$.

The following basic properties of the local degree can be found in Proposition 4.4 of \cite{Ri93}.
\begin{enumerate}[(a)]
\item\label{item:degree1} The function $y\rightarrow \mu(y,D,f)$ is constant on each connected component of $Y\setminus f(\partial D)$. 

\item\label{item:degree2} If $f : D \rightarrow f(D)$ is a homeomorphism, then $\mu(y, D, f ) = \pm 1$ for each $y \in f (D)$.

\item\label{item:degree3} If $y\in Y \setminus f(\partial D)$ and $f^{-1}(y) \subset D_1 \cup \dots \cup D_p$, where $D_i$ are disjoint domains in $D$ such that $y\in Y \setminus \partial D_i$ for each $i$, then
$$ \mu(y,D,f) = \sum_{i=1}^p \mu(y, D_i, f).$$

\item\label{item:degree4} If $f$ and $g$ are homotopic by a homotopy $H_t$, $t\in [0,1]$, and if $y\notin H_t(\partial D)$ for all $t\in [0,1]$, then $\mu(y,f,D) = \mu(y,g,D)$.
\end{enumerate}

For Lipschitz quotient maps, having local degree $\pm 1$ is enough to guarantee local injectivity in the following sense.
\begin{lemma}\label{lem:degree1impliesinjective}
Let $Z$ be metrically doubling and homeomorphic to $\RR^2$, and let $f:Z\rightarrow\RR^2$ be a LQ mapping. Let $D$ be a domain in $Z$. Let $y$ be a point in $\mathbb{R}^n \setminus f(\partial D)$ such that $|\mu(y,D,f)|= 1$.

Let $U$ be the connected component of $\mathbb{R}^n \setminus f(\partial D)$ containing $y$, and let $D'$ be a connected component of  $f^{-1}(U)$ in $D$. Assume that $f^{-1}(y) \cap D \subset D'$.

Then $f$ is injective on $D'$. 
\end{lemma}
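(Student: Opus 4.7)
The plan is to combine the additivity and homotopy invariance of the local degree with the classical structure theory for discrete open maps between surfaces.

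First, I would establish that $f(\partial D') \subset f(\partial D)$: any boundary point $x \in \partial D'$ either lies in $\partial D$, so $f(x) \in f(\partial D)$ directly, or lies in $D$ at a point where $f(x) \in \partial U$, and $\partial U \subset f(\partial D)$ because $U$ is a component of $\RR^2 \setminus f(\partial D)$. Combined with the hypothesis $f^{-1}(y) \cap D \subset D'$, property~(\ref{item:degree3}) yields $\mu(y, D', f) = \mu(y, D, f) = \pm 1$. Since $U$ is a connected subset of $\RR^2 \setminus f(\partial D')$, property~(\ref{item:degree1}) extends this to $\mu(y', D', f) = \pm 1$ for every $y' \in U$, with a single common sign.

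Next, I would fix an arbitrary $y' \in U$. By Proposition~\ref{prop:LQdiscrete}, the fiber $f^{-1}(y') \cap D'$ is finite, say $\{p_1,\dots,p_k\}$. Choosing pairwise disjoint open neighborhoods $V_i \subset D'$ of the $p_i$ with $V_i \cap f^{-1}(y') = \{p_i\}$, property~(\ref{item:degree3}) applied once more gives
\[
  \pm 1 \;=\; \mu(y', D', f) \;=\; \sum_{i=1}^k \mu(y', V_i, f).
\]

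The main obstacle is showing that all the summands share a common sign with absolute value at least one. By Proposition~\ref{prop:vaisala}, the branch set $\mathcal{B}_f$ has topological dimension at most zero, and the classical local Stoilow theorem for continuous discrete open maps between two-manifolds (cf.~\cite{Va66}) implies that near each $p_i$, $f$ is topologically equivalent to $z \mapsto z^{m_i}$ for some integer $m_i \geq 1$, whence $|\mu(y', V_i, f)| = m_i$. Since $\mathcal{B}_f$ is a closed set of topological dimension zero, it cannot separate the connected open subset $D'$ of the topological plane $Z$, so the complement $D' \setminus \mathcal{B}_f$ is connected; on this set the local index is $\pm 1$ at every point and locally constant, so a single sign prevails, and the Stoilow factorization propagates the same sign across the branch points. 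The degree identity then reads $\pm 1 = \pm \sum_{i=1}^k m_i$ with $m_i \geq 1$ and a common sign, which forces $k = 1$ and $m_1 = 1$. Since $y' \in U$ was arbitrary and $f(D') \subset U$, we conclude that $f|_{D'}$ is injective.
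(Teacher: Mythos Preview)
Your proof is correct and follows essentially the same route as the paper's: reduce to $D'$ via additivity, use that the degree is constant $\pm 1$ on $U$, decompose over preimages, and conclude $k=1$ from the fact that all local degrees carry the same sign. The only cosmetic difference is that the paper obtains the common sign by invoking directly that a discrete open map between $2$-manifolds is sense-preserving or sense-reversing (since $\mathcal{B}_f$ cannot separate), whereas you reprove this via the local Stoilow normal form; both arguments rest on the same non-separation property of the branch set.
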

\begin{proof}
As $f:Z\rightarrow \RR^2$ is a LQ mapping, it is continuous, open, and discrete (by Proposition \ref{prop:LQdiscrete}). Furthermore, the Lipschitz quotient property and fact that $\# f^{-1}(p)$ is finite for each $p$ (by Proposition \ref{prop:LQdiscrete}) implies that $f$ is a proper mapping.

As noted in Remark 3.2 of \cite{HR02}, or on p.\ 18 of \cite{Ri93}, Proposition \ref{prop:vaisala} implies that $f:Z\rightarrow \RR^n$ is either sense-preserving or sense-reversing. (This is because the branch set $\mathcal{B}_f$, having topological dimension at most $n-2$, cannot separate $D$.) Without loss of generality, we may assume that $f$ is sense-preserving, i.e. that
$$ \mu(p, V, f) > 0 $$
for every pre-compact domain $V\subseteq X$ and every point $p\in f(V) \setminus f(\partial V)$. 

Suppose that a point $z\in U$ has pre-images $x_1, \dots, x_k$ in $D'$. For each $i=1, 2, \dots, k$, place small disjoint domains $D_i$ in $D'$ such that $x_i\in D_i$ and $\partial D_i$ avoids the finite set $f^{-1}(z)$. 

Then 
$$ 1 = \mu(y,D,f) = \mu(y,D',f) = \mu(z,D',f) = \sum_{i=1}^k \mu(z,D_i,f). $$
Here the first equation is by assumption, the second and fourth are from property \eqref{item:degree3} of local degree, and the third follows from property \eqref{item:degree1}, since $y$ and $z$ are in the same connected component of $\RR^n \setminus f(\partial D')$.

Since $\mu(z,D_i,f)>0$ for each $i$, we must have that $k=1$. This shows that $f|_{D'}$ is injective.
\end{proof}

\begin{lemma}\label{lem:GHdegree1}
Let $X$ and $Y$ be $A$-LC and homeomorphic to $\RR^n$. Let $t>0$ and $0<\epsilon<1/100A^2$. 

Suppose that $(X,p_X)$ and $(Y,p_Y)$ are continuously $\epsilon$-close at scale $t$, with mappings $f,g$ as in Definition \ref{def:ctsclose}. Let $D_X$ and $D_Y$ be domains in $X$ and $Y$, respectively, and fix $t'>\epsilon t$.

Suppose also that $K_X \subset D_X$ and $K_Y \subset D_Y$ are compact connected sets such that
\begin{equation}\label{eqn:KXdef}
B(p_X,10At') \subset K_X \subset B(p_X, 5t) \subset D_X \subset B(p_X, 10At) 
\end{equation}
and
\begin{equation}\label{eqn:KYdef}
B(p_Y, t') \subset K_Y \subset B(p_Y, 2At') \subset B(p_Y, 11At) \subset D_Y \subset B(p_X, 22A^2t).
\end{equation}

Then the induced mapping
$$ f_{*} : H_n(D_X, D_X\setminus K_X) \rightarrow H_n(D_Y, D_Y \setminus K_Y)$$
is surjective.
\end{lemma}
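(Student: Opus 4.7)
The plan is to construct an approximate right inverse to $f_{*}$ using the near-inverse $g$ together with the $\epsilon t$-homotopy between $f \circ g$ and the identity supplied by Definition \ref{def:ctsclose}.

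First, I verify directly from the size constraints that $f$ is a pair map $(D_X, D_X \setminus K_X) \to (D_Y, D_Y \setminus K_Y)$. Indeed, $f(D_X) \subset B(p_Y, 10At + 2\epsilon t) \subset B(p_Y, 11At) \subset D_Y$, while if $x \in D_X$ satisfies $f(x) \in K_Y \subset B(p_Y, 2At')$, then $d(x, p_X) \le 2At' + 2\epsilon t \le 4At' \le 10At'$ (using $\epsilon t < t'$ and $A \ge 1$), so $x \in K_X$. Next, set $V = B(p_Y, 4t) \subset D_Y$ and define $A = g^{-1}(K_X) \cap V$. Then $g(V) \subset B(p_X, 4t + 2\epsilon t) \subset B(p_X, 5t) \subset D_X$, and $g(K_Y) \subset B(p_X, 2At' + 2\epsilon t) \subset B(p_X, 10At') \subset K_X$, so $K_Y \subset A$ and $g$ induces a pair map $(V, V \setminus A) \to (D_X, D_X \setminus K_X)$.

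The key observation is that the $\epsilon t$-homotopy $H : V \times [0, 1] \to D_Y$ from $f \circ g|_V$ to the inclusion $V \hookrightarrow D_Y$ is actually a pair homotopy between the pair maps $(V, V \setminus A) \to (D_Y, D_Y \setminus K_Y)$. Since $d(H(y, s), y) \le 2\epsilon t$, if $y \in V \setminus A$ satisfies $H(y, s) \in K_Y$ for some $s$, then $d(y, p_Y) \le 2At' + 2\epsilon t$ and hence $g(y) \in B(p_X, 2At' + 4\epsilon t) \subset B(p_X, 10At') \subset K_X$, contradicting $y \notin A$. By homotopy invariance of relative homology,
\[
f_{*} \circ g_{*} = \mathrm{inc}_{*} : H_n(V, V \setminus A) \to H_n(D_Y, D_Y \setminus K_Y).
\]

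It remains to check that the inclusion-induced map $\mathrm{inc}_{*}$ on the right is surjective, which then forces $f_{*}$ to be surjective. This map factors as $H_n(V, V \setminus A) \to H_n(V, V \setminus K_Y) \xrightarrow{\cong} H_n(D_Y, D_Y \setminus K_Y)$, the second map being an isomorphism by excision since $K_Y \subset V$ with $\overline{D_Y \setminus V} \cap K_Y = \emptyset$. Since $g(p_Y) \in B(p_X, \epsilon t) \subset \mathrm{int}(K_X)$, we have $p_Y \in \mathrm{int}(A)$, so the orientation class of $A$ in $H_n(V, V \setminus A)$ maps to the generator of $H_n(V, V \setminus \{p_Y\}) \cong \mathbb{Z}$. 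Combined with the isomorphism $H_n(V, V \setminus K_Y) \xrightarrow{\cong} H_n(V, V \setminus \{p_Y\})$ (valid because $K_Y$ is compact connected containing $p_Y$, via Poincar\'e--Lefschetz duality giving $H_n(V, V \setminus K) \cong H^0(K)$), this yields the required surjectivity. The main technical obstacle is verifying the pair-homotopy claim above, which hinges on the size estimate $\epsilon t < t'$ ensuring the homotopy cannot cross into $K_Y$ from outside $A$.
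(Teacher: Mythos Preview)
Your overall strategy---use the $\epsilon t$-homotopy between $f\circ g$ and the identity to get $f_*\circ g_*=\mathrm{inc}_*$, then check that $\mathrm{inc}_*$ is onto---is reasonable, and your pair-map and pair-homotopy verifications are fine. The gap is in the last paragraph. You invoke ``the orientation class of $A$ in $H_n(V,V\setminus A)$'' and Poincar\'e--Lefschetz duality, but those tools require $A$ to be \emph{compact} in $V$, and with your choice $V=B(p_Y,4t)$ this need not hold: since $K_X$ is only assumed to satisfy $B(p_X,10At')\subset K_X\subset B(p_X,5t)$, the set $g^{-1}(K_X)$ can reach out to radius roughly $5t+2\epsilon t>4t$, so $A=g^{-1}(K_X)\cap V$ can meet $\partial V$. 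In that situation $H_n(V,V\setminus A)\to H_n(V,V\setminus\{p_Y\})$ can fail to be surjective; for instance in $\RR^2$, if $A$ is a small disk about the origin together with a radial spike out to $\partial V$, then $V\setminus A$ is an annulus with a slit, hence contractible, so $H_2(V,V\setminus A)\cong H_1(V\setminus A)=0$ while $H_2(V,V\setminus\{0\})\cong\mathbb Z$. The hypotheses on $K_X$ do not rule out such shapes. You also cannot simply enlarge $V$ to swallow all of $g^{-1}(K_X)$, because you need $g(V)\subset D_X$ and you only know $B(p_X,5t)\subset D_X$.

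The paper sidesteps this by using the homotopy in the other order: it observes that $g\circ f|_{D_X}$ is $\epsilon t$-homotopic to the inclusion $D_X\hookrightarrow S^n$ (one-point compactification of $X$), and that both are pair maps $(D_X,D_X\setminus K_X)\to(S^n,S^n\setminus\{p_X\})$. Hence the composite
\[
H_n(D_X,D_X\setminus K_X)\xrightarrow{f_*}H_n(D_Y,D_Y\setminus K_Y)\xrightarrow{g_*}H_n(S^n,S^n\setminus\{p_X\})
\]
equals the inclusion-induced map, which is an isomorphism by excision and duality. Since the three groups are all isomorphic to $\mathbb Z$ (here one uses that $K_X$ and $K_Y$ are compact and connected), $f_*$ is forced to be surjective. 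Post-composing with $g_*$ to a target of the form $H_n(S^n,S^n\setminus\{\mathrm{pt}\})$ avoids having to control any auxiliary preimage set like your $A$.
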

\begin{proof}

The mapping $g\circ f|_{D_X}$ is $\epsilon t$-homotopic to the inclusion of $D_X$ into $X$. We consider $X$ as embedded in its one-point compactifiaction $S^n$. Hence, the mapping
$$ (g|_{D_Y})_* \circ (f|_{D_X})_* : H_n(D_X, D_X\setminus K_X) \rightarrow H_n(D_Y, D_Y\setminus K_Y) \rightarrow H_n(S^n, S^n\setminus \{p_X\}) $$
is the same map as the one induced by inclusion. That map is an isomorphism, by excision and duality (\cite{Sp81}, 4.6.5 and 6.2.17). 

Since all the groups are isomorphic to $\mathbb{Z}$, the first map must be surjective.
\end{proof}

\begin{lemma}\label{lem:degree1}

Fix $A,L\geq 1$. Let $X$ and $Y$ be $A$-LC topological planes, and let $\phi_X:X\rightarrow \RR^2$, $\phi_Y:Y\rightarrow\RR^2$ be $L$-LQ mappings. Let $t>0$ and $0<\epsilon< 1/100A^3L^3$. 

Suppose that $(X,p_X, \phi_X)$ and $(Y,p_Y, \phi_Y)$ are continuously $\epsilon$-close at scale $t$. In addition, suppose that $\phi_Y$ is a $L$-bilipschitz on $B(p_Y, 22A^2 t)$.

Then $\phi_X$ is $L$-bilipschitz on $B\left(p_X, \frac{t}{2AL^2(1+2L^2)}\right)$.
\end{lemma}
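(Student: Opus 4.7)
The plan is to establish that $\phi_X$ is injective on the ball $B(p_X, t/(2AL^2))$; Lemma \ref{lem:injectivebilip} will then upgrade this to $L$-bilipschitzness on $B(p_X, t/(2AL^2(1+2L^2)))$, which is the claimed conclusion. To prove injectivity I will invoke Lemma \ref{lem:degree1impliesinjective} at the point $y_0 := \phi_X(p_X)$, so the core task is to show that the local degree $\mu(y_0, D_X, \phi_X)$ on an appropriate domain $D_X$ has absolute value $1$ and that $p_X$ is the unique preimage of $y_0$ in $D_X$.

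First, I will set up the scales. Choose $t' := t/(4A^2)$, so that $\epsilon t < t' < t/(2A)$ (using $\epsilon < 1/(100A^3L^3)$). Using Lemma \ref{lem:balldomain}, select open connected domains $D_X \subset X$, $D_Y \subset Y$ and compact continua $K_X \subset D_X$, $K_Y \subset D_Y$ satisfying the containments \eqref{eqn:KXdef} and \eqref{eqn:KYdef}. The key feature is that $D_Y \subset B(p_Y, 22A^2t)$, so by hypothesis $\phi_Y|_{D_Y}$ is $L$-bilipschitz, hence a homeomorphism onto an open subset of $\RR^2$.

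Next comes the degree computation. Lemma \ref{lem:GHdegree1} (applied with $n=2$) gives that $f_* : H_2(D_X, D_X \setminus K_X) \to H_2(D_Y, D_Y \setminus K_Y)$ is surjective, where $f$ is the continuous approximation from Definition \ref{def:ctsclose}. Both groups are isomorphic to $\mathbb{Z}$ (Alexander duality, using that $K_X, K_Y$ are connected compacta in planes), so $f_* = \pm 1$. Composing with the isomorphism $(\phi_Y)_*$ induced by the homeomorphism $\phi_Y|_{D_Y}$, the map $(\phi_Y\circ f)_* : H_2(D_X, D_X\setminus K_X) \to H_2(\RR^2, \RR^2\setminus \phi_Y(K_Y))$ equals $\pm 1$. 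A short chain of distance estimates --- using that $\phi_X$ is $L$-Lipschitz, $\phi_Y|_{D_Y}$ is $L$-bilipschitz, and $|\phi_X - \phi_Y\circ f|\leq \epsilon t$ --- shows, for $\epsilon < 1/(100A^3L^3)$, that (a) $y_0$ lies in the interior of $\phi_Y(K_Y)$ (since $|y_0 - \phi_Y(p_Y)| \leq (1+L)\epsilon t$ is dwarfed by the radius $t'/L$ of the bilipschitz image of $B(p_Y,t') \subset K_Y$), (b) both $\phi_X(\partial D_X)$ and $\phi_Y(f(\partial D_X))$ lie at distance at least $4t/L$ from $y_0$, and (c) $(\phi_Y\circ f)^{-1}(y_0)\cap D_X \subset K_X$. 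Condition (a) ensures that the target $H_2(\RR^2, \RR^2 \setminus \phi_Y(K_Y))$ passes isomorphically to $H_2(\RR^2, \RR^2 \setminus\{y_0\})$, and (c) ensures the resulting composition computes the local degree $\mu(y_0, D_X, \phi_Y \circ f) = \pm 1$. Condition (b) allows the linear homotopy $H_s = (1-s)\phi_X + s(\phi_Y\circ f)$ to avoid $y_0$ on $\partial D_X$ for every $s\in [0,1]$, so by homotopy invariance $\mu(y_0, D_X, \phi_X) = \pm 1$.

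Finally, I conclude. Since $\phi_X$ is a proper, continuous, discrete, open map from a topological plane to $\RR^2$ (by Propositions \ref{prop:LQdiscrete} and \ref{prop:vaisala}), it is either sense-preserving or sense-reversing, and so the local indices at the preimages of $y_0$ in $D_X$ share a common sign and each has absolute value at least $1$; since they sum to $\pm 1$, we must have $\phi_X^{-1}(y_0) \cap D_X = \{p_X\}$. Letting $U$ be the component of $\RR^2 \setminus \phi_X(\partial D_X)$ containing $y_0$, which by (b) contains $B(y_0, 4t/L)$, and letting $D'$ be the component of $\phi_X^{-1}(U) \cap D_X$ containing $p_X$, we have $\phi_X^{-1}(y_0)\cap D_X = \{p_X\}\subset D'$, so Lemma \ref{lem:degree1impliesinjective} yields injectivity of $\phi_X$ on $D'$. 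The Lipschitz bound $\phi_X(B(p_X, 4t/L^2)) \subset B(y_0, 4t/L) \subset U$ shows $D' \supset B(p_X, 4t/L^2) \supset B(p_X, t/(2AL^2))$, and Lemma \ref{lem:injectivebilip} finishes the proof. I expect the main obstacle to lie in the degree-identification step above: verifying the three conditions (a)--(c) and, in particular, checking that the homological degree from Lemma \ref{lem:GHdegree1} genuinely computes $\mu(y_0, D_X, \phi_Y \circ f)$. This hinges on the compact preimage containment (c), and on tracking the numerical constants carefully enough to see that $\epsilon < 1/(100A^3L^3)$ is small enough for every inequality to close up.
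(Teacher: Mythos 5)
Your argument follows essentially the same route as the paper: the same domains and continua from Lemma \ref{lem:balldomain}, the surjectivity of $f_*$ from Lemma \ref{lem:GHdegree1} composed with the isomorphism induced by the bilipschitz $\phi_Y$ to get $|\mu(y_0,D_X,\phi_Y\circ f)|=1$, the homotopy transfer to $\phi_X$ using $|\phi_X-\phi_Y\circ f|\leq \epsilon t$ on $\partial D_X$, and then Lemmas \ref{lem:degree1impliesinjective} and \ref{lem:injectivebilip}; your choice $t'=t/(4A^2)$ in place of the paper's $t'=5L^2\epsilon t$ still satisfies all the required containments. The one inaccuracy is the final step ``$D'\supset B(p_X,4t/L^2)$'': the Lipschitz bound only places this ball inside $\phi_X^{-1}(U)\cap D_X$, not inside the single connected component $D'$ (balls need not be connected here), so you must insert one more application of Lemma \ref{lem:balldomain} — a connected open set sandwiched between $B(p_X,t/(2AL^2))$ and $B(p_X,t/L^2)$ lies in $D'$ — which is exactly what produces the radius $t/(2AL^2)$ you state.
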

\begin{proof}
Let $y=\phi_X(p_X)$ and let $D_Y$ be a domain that contains $B(p_Y, 11A t)$ and is contained in $B(p_Y, 22A^2t)\subset B(p_Y, t/\epsilon)$. This exists by Lemma \ref{lem:balldomain} and our choice of $\epsilon$.

Because $\phi_Y$ is $L$-LQ and $\epsilon <1/100AL$, we see that
$$\phi_Y(D_Y) \supseteq B(\phi_Y(p_Y),10t/L)\supset B(\phi_Y(p_Y), 10L\epsilon t)\ni y.$$
Let $z=\phi_Y^{-1}(y) \cap D_Y$, which is a single point because $\phi_Y$ is bilipschitz on $D_Y$. 

Let $D_X$ be a domain containing $B(p_X,5t)$ and contained in $B(p_X, 10At)\subset B(p_X, t/\epsilon)$.

Let $t'=5L^2\epsilon t$. The following facts follow easily from our assumptions and the triangle inequality:
$$ z\in B(p_Y, t') \text{ and } f^{-1}(z) \subset B(p_X, t').$$

Hence (using Lemma \ref{lem:balldomain}) we can find compact connected sets $K_X$ and $K_Y$ such that
$$ f^{-1}(z) \subset B(p_X, 10At') \subset K_X \subset D_X$$
and
$$ B(p_Y, t') \subset K_Y \subset B(p_Y, 2At') \subset D_Y.$$
Observe that $z\in K_Y$, so $y\in \phi_Y(K_Y)$.

Now consider the commutative diagram below.
\begin{equation*}
\begin{tikzcd}[row sep = large, column sep = tiny]
H_2(S^2) \arrow{d}{\overline{j}_{*}}\\
H_2(S^2, S^2\setminus K_X) \arrow{d}{(i_1)_{*}}& H_2(D_X, D_X\setminus K_X) \arrow{l}{\overline{e}_{*}} \arrow{r}{\overline{f}_{*}} \arrow{d}{(i_2)_{*}} & H_2(D_Y, D_Y\setminus K_Y) \arrow{r}{\overline{(\phi_Y)}_{*}}\arrow{d}{(i_3)_{*}} & H_2(S^2, S^2\setminus \phi_Y(K_Y))\arrow{d}{(i_4)_{*}} \\
H_2(S^2, S^2 \setminus  f^{-1}(z)) & H_2(D_X, D_X\setminus f^{-1}(z)) \arrow{l}{e_{*}} \arrow{r}{f_{*}} & H_2(D_Y, D_Y\setminus \{z\}) \arrow{r}{(\phi_Y)_{*}} & H_2(S^2, S^2\setminus \{y\})\\
& & & H_2(S^2) \arrow{u}{k_{*}}
\end{tikzcd}
\end{equation*}

In this diagram, the homomorphisms are all induced by inclusion, except those labeled $f_*$ and $\overline{f}_*$, which are induced by $f$, and those labeled $(\phi_Y)_*$ and $(\overline{\phi_Y})_*$, which are induced by $\phi_Y$. The homomorphisms $e_*$, $\overline{e}_*$, and $k_*$ are isomorphisms, as in the definition of local degree. The homomorphisms $\overline{j}_*$ and $(i_4)_*$ are surjective, by duality (\cite{Sp81}, 6.2.17).

Following this diagram from top left to bottom right along the third row gives the local degree $\mu(\phi_Y \circ f, y, D_X)$. Following from top left to bottom right along the second row shows that the overall map is surjective. (We use Lemma \ref{lem:GHdegree1} for $\overline{f}_*$ in the second row.)  Hence
$$|\mu( y, D_X, \phi_Y \circ f)| = 1.$$

Now, we know that $\sup_{\overline{D_X}}|\phi_Y\circ f - \phi_X| \leq \epsilon t$. It follows that
\begin{equation}\label{eqn:farboundary}
\dist(y,\phi_X(\partial D_X))\geq \frac{1}{L} t > 10\epsilon t.
\end{equation}

Indeed, if $q\in\partial D_X$, then $d(q,p_X)\geq 5t$, so $d(f(q), f(p_X))\geq 5t-\epsilon t$, and so
$$ d(y,\phi_X(q)) = d(\phi_X(p_X), \phi_X(q)) \geq d(\phi_Y(f(p_Y)), \phi_Y(f(q))) - 2\epsilon t \geq L^{-1}(5-\epsilon)t - 2\epsilon t \geq \frac{1}{L} t,$$
which proves \eqref{eqn:farboundary}.

Hence, the homotopy invariance of local degree implies that
$$|\mu(y, D_X,\phi_X)| = 1.$$

We now aim to apply Lemma \ref{lem:degree1impliesinjective}. Let $U$ be the connected component of $\RR^2 \setminus \phi_X(\partial D_X)$ containing $y$, and let $D'_X$ be the connected component of $D_X \cap \phi_X^{-1}(U)$ containing $p_X$. Equation \eqref{eqn:farboundary} implies that $U$ contains $B(y, \frac{1}{L} t)$. Hence, since $\phi_X$ is $L$-Lipschitz, $D_X \cap \phi_X^{-1}(U)$ contains $B(p_X, t/L^2)$. It then follows from Lemma \ref{lem:balldomain} that the connected component $D'_X$ contains $B(p_X, t/2AL^2)$.

Another simple argument shows that
$$ \phi_X^{-1}(y) \cap D_X \subset B(p_X, 10L^2\epsilon t) \subset B(p_X, t/2AL^2) \subset D'_X. $$

An application of Lemma \ref{lem:degree1impliesinjective} shows that $\phi_X$ is injective on $D'_X$, hence on $B(p_X, t/2AL^2)$. Applying Lemma \ref{lem:injectivebilip} shows that $\phi_X$ is therefore $L$-bilipschitz on $B(p_X, t/2AL^2(1+2L^2))$.
\end{proof}

\section{Compactness results}\label{sec:compactness}

We will make use of the following completeness property of $A$-LC topological planes and $L$-LQ mappings
\begin{lemma}\label{lem:LQlimit}
Fix constants $L,D,A\geq 1$. Let
$$ \{ (X_n, p_n, f_n \colon X\rightarrow \RR^2) \} \rightarrow (X,p,f \colon X\rightarrow \RR^2)\} $$
be a sequence converging in the pointed Gromov-Hausdorff sense. Suppose that each $X_n$ is an $A$-LC, metrically $D$-doubling, topological plane, and that each $f_n$ is $L$-LQ. Then $X$ is a metrically $D$-doubling topological plane and $f$ is an $L$-LQ mapping.

Furthermore, $X$ is $A'$-LC for some $A'$ depending only on $A$ and $D$, and if each $X_n$ is $\lambda$-ALC for some fixed $\lambda\geq 1$, then $X$ is $\lambda$-ALC.
\end{lemma}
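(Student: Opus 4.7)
The metric $D$-doubling of $X$ and $L$-Lipschitz property of $f$ pass to the limit by entirely standard arguments: a $D$-cover by half-radius balls in $X_n$ transfers via the $\epsilon$-isometries to an almost-cover in $X$ (and $D$-doubling survives after letting $\epsilon \to 0$), while the distance-shrinking inequality defining Lipschitz maps is stable under uniform convergence. For the nontrivial LQ inclusion $B(f(x),r/L) \subseteq f(B(x,r))$, I argue by compactness. Fix $y \in B(f(x),r/L)$ and a small $\epsilon>0$ with $y \in B(f(x),(r-2\epsilon)/L)$. Using the GH approximations, choose $x_n \in X_n$ with $x_n \to x$ and $y_n \in \RR^2$ with $y_n \to y$; for large $n$, the $L$-LQ property of $f_n$ supplies $z_n \in \overline{B}(x_n, r-\epsilon)$ with $f_n(z_n)=y_n$. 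Since each $X_n$ is $D$-doubling (hence proper) and the $z_n$ remain in a bounded region of $X_n$, a standard GH-diagonal argument extracts a subsequential limit $z \in \overline{B}(x,r-\epsilon) \subset B(x,r)$ with $f(z)=y$.

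For ALC preservation, suppose each $X_n$ is $\lambda$-ALC, and take $x,y \in A(p,r,2r) \subset X$. Pull back to approximants $x_n,y_n \in A(p_n, r - \eta_n, 2r + \eta_n) \subset X_n$ (where $\eta_n \to 0$); standard perturbation shows we may shrink $r$ slightly to put $x_n, y_n$ in the correct annulus. The $\lambda$-ALC property yields continua $K_n \subset A(p_n, r/\lambda, 2\lambda r)$ connecting $x_n$ and $y_n$, and uniform doubling plus the uniform diameter bound on the $K_n$ lets me extract (by a standard GH compactness for closed subsets of a GH-convergent sequence) a subsequential limit $K \subset A(p, r/\lambda, 2\lambda r)$ which is a continuum containing both $x$ and $y$.

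The genuinely difficult conclusion is that $X$ is a topological plane and $A'$-LC with $A' = A'(A,D)$; this is where I expect the main obstacle. My plan is to first show that the hypothesis ($A$-LC + $D$-doubling + topological plane) forces each $X_n$ to be $\lambda$-ALC for some $\lambda = \lambda(A,D)$: given $x,y \in A(p_n,r,2r)$, one builds a short chain of small overlapping balls in the annulus and contracts each to a point via the $A$-LC property, gluing arcs along the chain, with the metric doubling giving uniform control on the chain length. Combined with the previous paragraph, this forces $X$ to be $\lambda$-ALC for the same $\lambda$. To upgrade to the topological plane conclusion, I would use the continuous closeness of Lemma \ref{lem:semmes} at every scale to transfer local topology from $X_n$ to $X$: the continuous $\delta$-isometries $f,g$ with their $\delta$-homotopy inverse property behave like approximate local homeomorphisms, so local homology groups $H_2(X, X\setminus\{x\})$ can be computed from corresponding groups in $X_n$ (which are $\ZZ$ since $X_n$ is a topological $2$-manifold). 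Combined with doubling, $\lambda$-ALC, and standard characterizations of topological $2$-manifolds (along the lines of Moore's decomposition theorem), this gives that $X$ is homeomorphic to $\RR^2$. Once this is known, Lemma \ref{lem:ALCLC} immediately upgrades $\lambda$-ALC to $A'$-LC with $A' = A'(\lambda) = A'(A,D)$, completing the proof.

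The main technical obstacle is the last step: rigorously transferring the topological $2$-manifold property through the GH limit. The continuous closeness from Lemma \ref{lem:semmes} provides the right framework, but verifying that $X$ is in fact a manifold (rather than merely having $2$-manifold-like local homology) requires either invoking a nontrivial topological recognition theorem or a more careful direct construction of local Euclidean charts in $X$ from charts in $X_n$, using the ALC property to control the patching.
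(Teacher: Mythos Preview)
Your treatment of the routine parts (metric $D$-doubling, the $L$-LQ property of $f$, and preservation of $\lambda$-ALC) is fine and matches what the paper dismisses as ``standard and simple to prove from the definitions.''

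For the hard part, however, your route diverges from the paper's and contains a real gap. You propose to first deduce that the $X_n$ are uniformly $\lambda(A,D)$-ALC from the $A$-LC hypothesis, then pass ALC to the limit, then recover LC via Lemma~\ref{lem:ALCLC}. But your sketch for ``$A$-LC $+$ $D$-doubling $+$ topological plane $\Rightarrow$ $\lambda(A,D)$-ALC'' does not work as written: contracting small balls via LC gives you continua, but nothing in that contraction keeps them away from the center point $p$, which is exactly what ALC demands. Whether or not this implication is true, it is not justified by your argument, and more importantly it is an unnecessary detour. The paper instead observes that the $A$-LC property itself passes directly to Gromov--Hausdorff limits of $D$-doubling spaces with a controlled constant $A'=A'(A,D)$ (citing \cite{GCD16}, Lemma~2.12), so one gets LC of the limit $X$ without any ALC intermediary.

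There is a second, more serious circularity in your plan: you propose to invoke Lemma~\ref{lem:semmes} to obtain continuous $\epsilon$-isometries between $X_n$ and $X$ and thereby transfer the local $2$-manifold structure. But Lemma~\ref{lem:semmes} as stated requires \emph{both} $(M,p)$ and $(N,q)$ to already be $A$-LC topological planes; since that is precisely what you are trying to establish for $X$, you cannot invoke it. The paper avoids this by citing \cite{GCD16}, Proposition~2.19, which (using only the already-established LC property of the limit and the skeleta-extension technique in one direction) shows that $X$ is a linearly contractible homology $2$-manifold. The final step is then a clean appeal to the recognition theorem that every homology $2$-manifold is a topological $2$-manifold (\cite{Br97}, Theorem~V.16.32), together with the elementary fact that the only simply connected non-compact $2$-manifold is $\RR^2$. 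Your instinct that local homology is the right invariant is correct, but you should make the homology-manifold step explicit and invoke the $2$-dimensional recognition theorem rather than leaving it as ``standard characterizations along the lines of Moore's theorem.''
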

\begin{proof}
The following parts of the lemma are standard and simple to prove from the definitions: $X$ is $D$-doubling, $f$ is $L$-LQ, and if $X_n$ are all $\lambda$-ALC then $X$ is $\lambda$-ALC.

That $X$ is $A'$-LC for some $A'$ depending only on $A$ and $D$ appears in Lemma 2.12 of  \cite{GCD16}.

It remains only to show that $X$ is a topological plane. This follows from, e.g., Proposition 2.19 of \cite{GCD16} (note that Ahlfors regularity is not really required in that result, only metric doubling). Indeed, that result shows that $X$ is a linearly contractible (hence simply connected) homology $2$-manifold. We then note that all homology $2$-manifolds are topological $2$-manifolds (\cite{Br97}, Theorem V.16.32), and that the plane is the only simply connected non-compact $2$-manifold.
\end{proof}

We now use some compactness arguments to show that Lipschitz quotient mappings are quantitatively bilipschitz on balls of definite size.

\begin{prop}\label{prop:subball}
For each $L,D,A\geq 1$, there is a constant $s_0 = s_0(L,D,A)$ with the following property:

Let $X$ be a metrically $D$-doubling, $A$-LC, topological plane, and let $f \colon X\rightarrow\RR^2$ be a $L$-LQ mapping. Then in each ball $B(p,r)$ in $X$, there is a ball $B(q,s_0 r)\subseteq B(p,r)$ such that $f$ is $L$-bilipschitz on $B(q,s_0 r)$.
\end{prop}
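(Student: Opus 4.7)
The plan is to argue by contradiction, combining Gromov--Hausdorff compactness with the stability of bilipschitz balls provided by Lemma \ref{lem:degree1}. Suppose no such $s_0 = s_0(L,D,A)$ exists. Then for each $n \in \mathbb{N}$ we can find an $A$-LC, metrically $D$-doubling topological plane $X_n$, an $L$-LQ map $f_n \colon X_n \to \mathbb{R}^2$, and a ball $B(p_n, r_n) \subset X_n$ on which $f_n$ fails to be $L$-bilipschitz on every sub-ball of radius $r_n/n$. Rescaling each $X_n$ by $r_n^{-1}$, we may assume $r_n = 1$. Passing to a subsequence, the pointed triples converge, $(X_n, p_n, f_n) \to (X, p, f)$. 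By Lemma \ref{lem:LQlimit}, the limit $X$ is a metrically $D$-doubling, $A'$-LC topological plane (with $A' = A'(A, D) \geq A$) and $f \colon X \to \mathbb{R}^2$ is again $L$-LQ.

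Next I would locate a bilipschitz sub-ball inside the limit. The map $f$ is continuous, open, and, by Proposition \ref{prop:LQdiscrete}, discrete. Proposition \ref{prop:vaisala} then produces a branch set $\mathcal{B}_f$ of topological dimension at most $0$, which therefore has empty interior in the topological plane $X$. Thus we may pick $q \in B(p, 1/2) \setminus \mathcal{B}_f$, near which $f$ is a local homeomorphism. Choose $t > 0$ small enough that $f|_{B(q, t)}$ is injective and $B(q, t) \subset B(p, 1)$. Lemma \ref{lem:injectivebilip} then yields that $f$ is $L$-bilipschitz on $B(q, t/(1 + 2L^2))$.

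Finally I would transfer this bilipschitz property back to $X_n$ using the stability result. Fix a scale $T \in (0, 1/2)$ so small that $22 (A')^2 T \leq t/(1 + 2L^2)$, and a tolerance $\epsilon_0 \in (0, 1/(100 (A')^3 L^3))$ admissible for Lemma \ref{lem:degree1}. Pointed GH convergence combined with Remark \ref{rmk:GHclose}, followed by Lemma \ref{lem:semmes} (which upgrades $\epsilon$-closeness to continuous $\Lambda(A', L)\epsilon$-closeness), and relocation of the base point via Lemma \ref{lem:closeness}(a), show that for all sufficiently large $n$ the triples $(X_n, q_n, f_n)$ and $(X, q, f)$ are continuously $\epsilon_0$-close at scale $T$ for suitable approximants $q_n \to q$ in $X_n$. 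Lemma \ref{lem:degree1} then gives that $f_n$ is $L$-bilipschitz on $B(q_n, s)$, where $s := T/(2 A' L^2 (1 + 2L^2))$ is a positive constant independent of $n$. Since $q_n \to q$ and $q \in B(p, 1/2)$, this ball is contained in $B(p_n, 1)$ for all large $n$, so for any $n > 1/s$ we produce a bilipschitz sub-ball of radius greater than $1/n$, contradicting the choice of the sequence.

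The only content beyond constant bookkeeping is the existence of a point in the GH limit at which $f$ is a local homeomorphism, and this is precisely where the two-dimensional topology enters: a branch set of dimension at most $0$ cannot fill an open planar region. This is the main obstacle, and it is the reason the argument is restricted to $\mathbb{R}^2$-valued LQ mappings (an analogue of Proposition \ref{prop:LQdiscrete} being unknown in higher dimensions, per the remark following it).
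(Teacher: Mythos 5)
Your proposal is correct and follows essentially the same route as the paper: a contradiction/rescaling argument, Gromov--Hausdorff compactness via Lemma \ref{lem:LQlimit}, locating an injective (hence, by Lemma \ref{lem:injectivebilip}, bilipschitz) ball in the limit using Propositions \ref{prop:LQdiscrete} and \ref{prop:vaisala}, and transferring the bilipschitz ball back to the approximating spaces via Lemma \ref{lem:degree1}. If anything, your version is slightly more careful than the paper's (which isolates the transfer step as a separate lemma) in explicitly invoking Lemma \ref{lem:semmes} to upgrade closeness to continuous closeness and in choosing the scale $T$ to match the $22(A')^2t$ hypothesis of Lemma \ref{lem:degree1}.
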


The proof requires the following preliminary lemma. 

\begin{lemma}\label{lem:limitinglemma}
Let $\{(X_n, p_n, f_n\colon X_n\rightarrow\mathbb{R}^2)\}$ be a sequence with each $X_n$ metrically $D$-doubling, $A$-LC, and homeomorphic to $\RR^2$ and each $f_n$ $L$-LQ. Suppose that this sequence converges in the pointed Gromov-Hausdorff sense to a triple $(X,p,f \colon X \rightarrow \RR^2)$. Then there exists $s_0>0$ such that for all $n$ sufficiently large, $f_n$ is $L$-bilipschitz on a ball of radius $s_0$ in $B_n(p_n, 1)$. 
\end{lemma}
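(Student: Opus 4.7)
The plan is to obtain the lemma by first identifying a definite metric ball on which the limiting mapping $f$ is $L$-bilipschitz, then propagating this property back to the sequence via the stability result Lemma~\ref{lem:degree1}. First I would invoke Lemma~\ref{lem:LQlimit} to conclude that $X$ is a metrically $D$-doubling, $A'$-LC topological plane (with $A' = A'(A, D)$) and that $f \colon X \to \RR^2$ is an $L$-LQ mapping. Proposition~\ref{prop:LQdiscrete} then shows $f$ is discrete, and Proposition~\ref{prop:vaisala} produces a closed branch set $\mathcal{B}_f \subset X$ of topological dimension at most $0$ off of which $f$ is a local homeomorphism. Pick any point $q \in B(p, 1/2) \setminus \mathcal{B}_f$ and some $r \in (0, 1/10)$ such that $f$ is injective on $B(q, r) \subset B(p, 1)$. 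Since $\RR^2$ is geodesic, Lemma~\ref{lem:injectivebilip} then promotes this injectivity to $L$-bilipschitzness of $f$ on $B(q, r/(1 + 2L^2))$.

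Next I would transfer this bilipschitz behavior to $f_n$ for $n$ large. Setting $t := r / (22 {A'}^2 (1 + 2L^2))$ so that $f$ is $L$-bilipschitz on $B(q, 22 {A'}^2 t)$, I pick points $q_n \in X_n$ converging to $q$ in the pointed Gromov--Hausdorff sense. By Remark~\ref{rmk:GHclose} there is a sequence $\epsilon_n \searrow 0$ such that the triples $(X_n, q_n, f_n)$ and $(X, q, f)$ are $\epsilon_n$-close at scale $t$; Lemma~\ref{lem:semmes} upgrades this to continuous $\Lambda \epsilon_n$-closeness for a constant $\Lambda = \Lambda(A', L)$. For $n$ large enough that $\Lambda \epsilon_n < 1/(100 {A'}^3 L^3)$, an application of Lemma~\ref{lem:degree1}, with $(X, q, f)$ playing the role of $(Y, p_Y, \phi_Y)$ and $(X_n, q_n, f_n)$ playing the role of $(X, p_X, \phi_X)$, yields that $f_n$ is $L$-bilipschitz on $B(q_n, s_0)$, where
\[ s_0 := \frac{t}{2 A' L^2 (1 + 2L^2)}. \]
Since $s_0 < r < 1/10$ and $q_n$ is arbitrarily close to $q \in B(p, 1/2)$, the ball $B(q_n, s_0)$ lies in $B_n(p_n, 1)$ for all sufficiently large $n$.

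The main step requiring care is the location of the definite bilipschitz ball in the limit: one must combine the branched-covering structure coming from Propositions~\ref{prop:LQdiscrete} and~\ref{prop:vaisala} with the Lipschitz-quotient property (via Lemma~\ref{lem:injectivebilip}) to pass from local injectivity at a single point of $X \setminus \mathcal{B}_f$ to genuine $L$-bilipschitzness on a metric ball. The remaining work is essentially bookkeeping: verifying that the constants in Lemma~\ref{lem:degree1} accommodate the limiting LC constant $A'$, and that the continuous closeness demanded by Lemma~\ref{lem:degree1} is indeed supplied by Lemma~\ref{lem:semmes} from the ordinary Gromov--Hausdorff convergence.
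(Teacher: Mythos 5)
Your proposal is correct and follows essentially the same route as the paper: use Lemma \ref{lem:LQlimit}, Proposition \ref{prop:LQdiscrete}, Proposition \ref{prop:vaisala}, and Lemma \ref{lem:injectivebilip} to locate a ball in $B(p,1)$ on which the limit $f$ is $L$-bilipschitz, then transfer this to $f_n$ for large $n$ via closeness and Lemma \ref{lem:degree1}. If anything, you are slightly more careful than the paper's write-up in explicitly invoking Lemma \ref{lem:semmes} to pass from $\epsilon$-closeness to the continuous closeness that Lemma \ref{lem:degree1} formally requires.
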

\begin{proof}
First of all, we observe by Lemma \ref{lem:LQlimit} that the space $X$ is an $A'$-LC, metrically $D$ doubling, topological plane, (with $A'=A'(A,D)\geq A$) and that the limit function $f$ is $L$-LQ. Hence, $f$ is discrete by Proposition \ref{prop:LQdiscrete}. By Proposition \ref{prop:vaisala}, $f$ is a homeomorphism on some sub-ball in $B(x,1)$, and so by Lemma \ref{lem:injectivebilip}, $f$ is $L$-bilipschitz on some ball $B(p',s)\subset B_X(p,1)$. 

We now claim that, for some $s_0>0$ and all $n$ sufficiently large, the mapping $f_n$ is bilipschitz on a ball $B(p'_n, s_0)\subset B_n(p_n,1)$. This will yield a contradiction.

Fix $\delta =  (2000A'L)^{-4} s$. If $n$ is sufficiently large, then $(X_n, p_n, f_n)$ and $(X,p,f)$ are $\delta$-close at scale $1$. Hence, by Lemma \ref{lem:closeness}, there are points $p'_n\in X_n$ such that the triples $(X_n, p'_n, f_n)$ and $(X,p', f)$ are $\frac{20A'\delta}{s}$-close at scale $t=s/20A'$.

Fix $n$ large as above. Then $(X_n, p'_n, f_n)$ and $(X,p',f)$ are $\epsilon$-close at scale $t$ (where $\epsilon =\frac{20A'\delta}{s}<1/100A'L^4$), and $f$ is $L$-bilipschitz on $B(p',s)\supseteq B(p', 20A't)$.

Thus, by Lemma \ref{lem:degree1}, $f_n$ is $L$-bilipschitz on $B(p'_n, t/2A'L^2(1+2L^2)) = B(p'_n,s_0)$. This completes the proof.
\end{proof}

\begin{proof}[Proof of Proposition \ref{prop:subball}]
We argue by contradiction. If the Proposition fails, then for some constants $D$, $A$, $L$, there is a sequence
$$ \left\{ (X_n, p_n, f_n) \right\} $$
of metrically $D$-doubling, $A$-LC, $L$-LQ topological planes and radii $r_n>0$ such that $f_n$ fails to be $L$-bilipschitz on each ball $B(p'_n, r_n/n)\subset B(p_n, r_n)$.

Consider the sequence
\begin{equation}\label{rescaledseq}
\left\{ (r_n^{-1} X_n, p_n, g_n) \right\},
\end{equation}
where $g_n(x) = r_n^{-1} f_n(x)$.

The spaces $r_n^{-1}X_n$ are still metrically $D$-doubling, $A$-LC topological planes, and the mappings $g_n:r_n^{-1}X_n\rightarrow \RR^2$ are $L$-LQ. Furthermore, the map $g_n$ fails to be $L$-bilipschitz on each ball $B(p'_n, 1/n)\subset B(p_n, 1)\subset \frac{1}{r_n} X_n$.

Consider a convergent subsequence of the sequence in \eqref{rescaledseq}. By Lemma \ref{lem:limitinglemma}, we see that there is a constant $s_0>0$ such that, for arbitrarily large values of $n\in\mathbb{N}$, $g_n$ is $L$-bilipschitz on a ball of radius $s_0$ in $B(p_n, 1)$. This is a contradiction.
\end{proof}

\section{Proof of Theorem \ref{thm:main} and Corollary \ref{cor:main}}\label{sec:proof}

\begin{proof}[Proof of Theorem \ref{thm:main}]
Fix a metric measure space $(X,d,\mu)$, a Borel set $U\subset X$, and a Lipschitz function $\phi:X\rightarrow \RR^n$ as in the assumptions of Theorem \ref{thm:main}. When proving Theorem \ref{thm:main}, we may assume without loss of generality that $U$ is closed, metrically doubling, and uniformly pointwise doubling. This assumption is justified by Lemma \ref{lem:ptwisedoubling}. By the definition of blowups for pointwise doubling spaces (see Remark \ref{rmk:ptwiseblowup}), this means that, for a.e. $x\in U$, blowups of $X$ at $x$ are the same as blowups of $U$ at $x$.

We first prove that $n\leq 2$, the first half of Theorem \ref{thm:main}. By Proposition \ref{prop:LQblowup}, there is a point $x\in U$ and a blowup $(\hat{X}, \hat{x}, \hat{\phi})$ of $(X,x,\phi)$ which is a metrically doubling topological plane and such that $\hat{\phi}$ is a Lipschitz quotient map onto $\RR^n$. Suppose $n> 2$. If $\pi:\RR^n\rightarrow\RR^2$ is the projection onto the first two coordinates, then $\pi\circ \hat{\phi}$ is a Lipschitz quotient mapping from $\hat{X}$ onto $\RR^2$. By Propositions \ref{prop:LQdiscrete} and \ref{prop:vaisala}, $\pi\circ\hat{\phi}$ is a homeomorphism on some ball $B(\hat{x}, t)\subset \hat{X}$. It is therefore impossible for $\hat{\phi}(B(\hat{x},t))$ to contain a ball in $\RR^n$, which contradicts the fact that $\hat{\phi}$ is a Lipschitz quotient map onto $\RR^n$. Therefore, $n\leq 2$.

We now assume $n=2$ and proceed to show that in this case $\mu|_U$ is $2$-rectifiable. This will complete the proof of Theorem \ref{thm:main}.

By assumption, at $\mu$-a.e. point $x\in U$, each blowup of $X$ is a topological plane. By Proposition \ref{prop:blowupsALC} in the appendix, it follows that for $\mu$-a.e. $x\in U$, there is a constant $\lambda(x)$ such that each blowup of $X$ at $x$ is $\lambda(x)$-ALC.

Thus, after our reductions, we have that, for $\mu$-a.e. $x\in U$, each blowup $(\hat{X}, \hat{x}, \hat{\phi})$ of $(X,x,\phi)$ at $x$ has the following properties:
\begin{enumerate}[(i)]
\item\label{item:blowup1} $\hat{X}$ is a metrically $D$-doubling, $\lambda(x)$-ALC topological plane.
\item\label{item:blowup2} $\hat{\phi}:\hat{X}\rightarrow\RR^2$ is a Lipschitz quotient map.
\end{enumerate}

Fix constants $L,\lambda\geq 1$. Let $A=A(\lambda)$ as provided by Lemma \ref{lem:ALCLC}. Let $\epsilon = \epsilon(L,D,A) = \left(\frac{s_0}{1000AL\Lambda(1+2L^2)}\right)^{5}$, where $s_0 = s_0(L,D,A)$ is as in Proposition \ref{prop:subball} and $\Lambda = \Lambda(A,L)$ is as in Lemma \ref{lem:semmes}. Fix $r_0>0$ and define $Y=Y_{L,\lambda, r_0}\subset U$ by
\begin{align}\label{eqn:Ydef}
Y =  \{ x\in U &: \text{ for all } r\in (0,r_0) , \text{ there is an }\\
&\lambda\text{-ALC}, D\text{-doubling}, L\text{-LQ plane }  (W, w, \psi) \text{ such that } \nonumber\\
&(r^{-1}U,x,r^{-1}(\phi-\phi(x))) \text{ and } (W, w, \psi) \text{ are } \epsilon\text{-close at scale } 1\} \nonumber
\end{align}

For each fixed $L, \lambda \geq 1$ and $r_0>0$, it follows from Lemma \ref{lem:LQlimit} that the above set $Y$ is closed in $U$, hence in $X$.

We can write $U$, up to an exceptional subset of measure zero, as a countable union of sets $Y$ as above, by varying $L, \lambda \in\mathbb{N}$ and  $r_0 \in \{1, 1/2, 1/3, \dots\}$. This follows from the rephrasing of Gromov-Hausdorff convergence in terms of closeness given in Remark \ref{rmk:GHclose}.

To prove Theorem \ref{thm:main}, it therefore suffices to show that $\mu|_Y$ is $2$-rectifiable. 

We first make the following simple rescaling observation.

\begin{lemma}\label{lem:rescaling}
For each $x\in Y$ and $0<r<r_0$, there is an $A$-LC, $L$-LQ plane $(\hat{X}, \hat{x}, \hat{\phi})$ such that $(U,x,\phi)$ and $(\hat{X}, \hat{x}, \hat{\phi})$ are $\epsilon$-close at scale $r$.
\end{lemma}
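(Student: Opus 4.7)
The plan is to observe that the lemma is essentially a rescaling of the definition of $Y$, combined with the upgrade from ALC to LC via Lemma \ref{lem:ALCLC}.

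\textbf{Step 1: Extract the approximating model at scale $1$.} By the definition of $Y$ in \eqref{eqn:Ydef}, given $x\in Y$ and $r\in(0,r_0)$ there exist a $\lambda$-ALC, $D$-doubling topological plane $(W,w)$ and an $L$-LQ map $\psi\colon W\to\RR^2$ such that $(r^{-1}U,x,r^{-1}(\phi-\phi(x)))$ is $\epsilon$-close to $(W,w,\psi)$ at scale $1$. Since $W$ is complete (being a Gromov-Hausdorff limit of doubling spaces, or alternatively taken to be its completion without loss of generality), metrically $D$-doubling, and homeomorphic to $\RR^2$, Lemma \ref{lem:ALCLC} upgrades the $\lambda$-ALC hypothesis to $A$-LC with $A=A(\lambda)$, which is exactly the constant fixed above.

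\textbf{Step 2: Rescale.} Define $\hat X := rW$ (i.e., $W$ equipped with the metric $r\,d_W$), $\hat x := w$, and $\hat\phi\colon \hat X\to\RR^2$ by $\hat\phi(y) := r\psi(y) + \phi(x)$. Since linear contractibility and the Lipschitz-quotient property are scale-invariant, and translation by the constant $\phi(x)$ does not affect the LQ property, $(\hat X,\hat x)$ is an $A$-LC topological plane and $\hat\phi$ is $L$-LQ.

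\textbf{Step 3: Transport the closeness to scale $r$.} Let $f,g$ be the $\epsilon$-isometries realizing the closeness of $(r^{-1}U,x,r^{-1}(\phi-\phi(x)))$ and $(W,w,\psi)$ at scale $1$. The sets $B_{r^{-1}U}(x,1/\epsilon)$ and $B_U(x,r/\epsilon)$ coincide, and an $\epsilon$-isometry with respect to the metric $r^{-1}d$ on $U$ and $d_W$ on $W$ becomes, upon multiplying distances by $r$, an $\epsilon r$-isometry with respect to $d$ on $U$ and $r\,d_W = d_{\hat X}$. Thus $f,g$ (as set maps) witness the metric closeness of $(U,x)$ and $(\hat X,\hat x)$ at scale $r$. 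For the function estimate, the inequality
\[
\bigl|r^{-1}(\phi - \phi(x))\circ g - \psi\bigr|\leq\epsilon
\]
rescales (multiply by $r$ and add $\phi(x)$) to $|\phi\circ g - \hat\phi|\leq\epsilon r$, and similarly $|\hat\phi\circ f - \phi|\leq\epsilon r$. This verifies Definition \ref{def:close} at scale $r$ with parameter $\epsilon$.

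The argument contains no real obstacle: it is a bookkeeping check that Definition \ref{def:close} and the properties $A$-LC / $L$-LQ / $D$-doubling all behave correctly under the rescaling that converts scale $1$ on $r^{-1}U$ into scale $r$ on $U$. The one nontrivial point is the ALC-to-LC upgrade, which is handled by invoking Lemma \ref{lem:ALCLC}.
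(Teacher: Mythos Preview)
Your proof is correct and follows essentially the same approach as the paper: extract the model $(W,w,\psi)$ from the definition of $Y$, upgrade $\lambda$-ALC to $A$-LC via Lemma~\ref{lem:ALCLC}, and rescale to $(rW,w,r\psi+\phi(x))$. You spell out the verification that $\epsilon$-closeness at scale $1$ in the rescaled space becomes $\epsilon$-closeness at scale $r$ in $U$ more explicitly than the paper does, but the argument is the same.
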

\begin{proof}
Given $x\in Y$ and $r<r_0$, let $(W,w,\psi)$ be as provided by the definition of $Y$. Note that $W$ is $\lambda$-ALC and hence $A$-LC by our choice of $A=A(\lambda)$ from Lemma \ref{lem:ALCLC}. The rescaled and translated triple
$$ (\hat{X}, \hat{x}, \hat{\phi}) := (rW, w, r\psi + \phi(x))$$
is then the desired one.
\end{proof}

\begin{lemma}\label{lem:iteration}
Let $q\in Y$ be a point and let $r\in (0,r_0)$. Let $(\hat{X}, \hat{q}, \hat{\phi})$ be an $A$-LC, $L$-LQ topological plane such that $(U,q,\phi)$ and $(\hat{X}, \hat{q}, \hat{\phi})$ are $\frac{10}{s_0}\epsilon$-close at scale $r$.

Suppose further that $\hat{\phi}$ is $L$-bilipschitz on $B(\hat{q},r)$.

Then $\phi$ is $2L$-bilipschitz on $B(q,r/20) \cap Y$.
\end{lemma}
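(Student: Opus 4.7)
The plan is to iterate Lemma~\ref{lem:degree1} along a geometric sequence of scales, propagating the bilipschitz information for $\hat{\phi}$ at scale $r$ down to all sufficiently small scales around each point of $Y \cap B(q, r/20)$. Write $\epsilon' = 10\epsilon/s_0$, set $c = (2AL^2(1 + 2L^2))^{-1}$ (the shrinking factor produced by Lemma~\ref{lem:degree1}), and fix a constant $K = 44A^3L^2(1 + 2L^2)$ chosen so that $c\rho \geq 22A^2(\rho/K)$; this ensures that a bilipschitz ball of radius $c\rho$ at scale $\rho$ is large enough to serve as the hypothesis of Lemma~\ref{lem:degree1} at scale $\rho/K$. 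Set $\rho_0 = r/(44A^2)$ and $\rho_k = \rho_0 K^{-k}$.

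For the base case at scale $\rho_0$, fix $x \in Y \cap B(q, r/20)$ and let $f$ be the $\epsilon' r$-isometry realizing the $\epsilon'$-closeness of $(U, q, \phi)$ and $(\hat{X}, \hat{q}, \hat{\phi})$ at scale $r$. Transport this closeness from scale $r$ down to scale $\rho_0$ around $x$ via Lemma~\ref{lem:closeness}(a) (at a cost of the factor $r/\rho_0 = 44A^2$ in the closeness constant), compose by Lemma~\ref{lem:closeness}(b) with the $\epsilon$-closeness of $(U, x, \phi)$ to the $L$-LQ model $(W_0, w_0, \psi_0)$ supplied by the defining property of $Y$ at scale $\rho_0$, upgrade to continuous closeness via Lemma~\ref{lem:semmes}, and invoke Lemma~\ref{lem:degree1}. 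The key input is that $\hat{\phi}$ is $L$-bilipschitz on $B(\hat{q}, r) \supseteq B(f(x), 22A^2\rho_0)$, because $d(f(x), \hat{q}) \leq r/20 + \epsilon' r$ and $22A^2\rho_0 = r/2$. This yields that $\psi_0$ is $L$-bilipschitz on $B(w_0, c\rho_0)$. The inductive step $k \to k+1$ is structurally identical: transport $(W_k, w_k, \psi_k)$ from scale $\rho_k$ to scale $\rho_{k+1}$, compose with the scale-$\rho_{k+1}$ model at $x$, upgrade, and apply Lemma~\ref{lem:degree1}, this time using the already-established bilipschitzness of $\psi_k$ on $B(w_k, c\rho_k)$, which by the choice of $K$ contains the required $22A^2\rho_{k+1}$-ball.

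With bilipschitz models now available at every scale $\rho_k$ around every $x \in Y \cap B(q, r/20)$, the conclusion follows from a case analysis on $t = d(x, y)$. If $t \geq (2 + 4L)\epsilon' r$, then direct comparison using the $\epsilon'$-closeness at scale $r$ gives $|\phi(x) - \phi(y)| \geq (1/L)(t - \epsilon' r) - 2\epsilon' r \geq t/(2L)$. Otherwise, choose $k \geq 0$ with $\rho_k \in [t/c, Kt/c]$, which is possible because the intervals $[c\rho_{k+1}, c\rho_k]$ partition $(0, c\rho_0]$ and the smallness of $\epsilon'$ forces $(2+4L)\epsilon' r \leq c\rho_0$. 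Then $y \in B(x, c\rho_k)$, and combining the $\epsilon$-closeness of $(U, x, \phi)$ to $(W_k, w_k, \psi_k)$ with the bilipschitz bound on $\psi_k$ gives $|\phi(x) - \phi(y)| \geq t/L - (1/L + 2)\epsilon\rho_k \geq t/(2L)$, using $\rho_k \leq Kt/c$ and the fact that $\epsilon$ was chosen so small that $(2+4L)\epsilon K/c \leq 1$. The upper bound $|\phi(x) - \phi(y)| \leq 2L d(x, y)$ is a simpler version of the same argument, using only that each $\psi_\rho$ is $L$-Lipschitz (from the LQ property) and letting the scale $\rho$ of the chosen model approach $t$ from above.

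The main difficulty is constant bookkeeping: guaranteeing across the entire iteration that the composed scale-$\rho_k$ closeness stays under the threshold $(100A^3L^3\Lambda)^{-1}$ required by Lemma~\ref{lem:degree1} after the continuous upgrade, and that the bilipschitz ball carried forward at each step has the radius $22A^2\rho_{k+1}$ needed to feed into Lemma~\ref{lem:degree1} at the next step. Both constraints are accommodated by the specific choice $\epsilon = (s_0/(1000AL\Lambda(1 + 2L^2)))^5$ fixed earlier, which provides ample slack for the finitely many polynomial factors in $A, L, \Lambda, s_0$ that appear.
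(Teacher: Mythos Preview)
Your argument is correct and follows essentially the same strategy as the paper: iterate Lemma~\ref{lem:degree1} along a geometric sequence of scales to propagate the bilipschitz property of the model maps, then at any pair $x,y\in Y\cap B(q,r/20)$ read off the lower bound for $|\phi(x)-\phi(y)|$ from the model at the matching scale. The only cosmetic differences are your choice of scale ratio $K$ and initial scale $\rho_0=r/(44A^2)$ (versus the paper's $s_1$ and $r/10$), and your explicit large-$t$ case, which the paper avoids because its base scale $r/10$ already dominates $\operatorname{diam}(B(q,r/20))$.
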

\begin{proof}
Let $s_1 = \frac{1}{100A^2L^2(1+2L^2)}$ and let $\epsilon' = 100s_0^{-1}AL^2(1+2L^2)\epsilon \in (0,1/10)$. 

We will first prove the following claim.

\begin{claim}\label{claim:iteration}
For any $x\in B(q,r/20) \cap Y$ and $k\geq 0$, there is an $A$-LC, $L$-LQ plane $(\tilde{X}, \tilde{x}, \tilde{\phi})$ such that $(U,x,\phi)$ is $\epsilon'$-close to  $(\tilde{X}, \tilde{x}, \tilde{\phi})$ at scale $s_1^k r/10$ and such that $\tilde{\phi}$ is $L$-bilipschitz on $B(\tilde{x}, s_1^kr/5)$. 
\end{claim}
\begin{proof}[Proof of Claim \ref{claim:iteration}]
The proof is by induction on $k\geq 0$.

If $k=0$, we set $(\tilde{X}, \tilde{x}, \tilde{\phi})$ to be the triple $(\hat{X}, \hat{x}, \hat{\phi})$, where $\hat{x}\in B(\hat{q},r/10)\subset \hat{X}$ is chosen so that $(U,x,\phi)$ and $(\hat{X}, \hat{x}, \hat{\phi})$ are $\frac{100}{s_0}\epsilon$ close at scale $r/10$. (Here we use Lemma \ref{lem:closeness}.) Since $\hat{\phi}$ is $L$-bilipschitz on $B(\hat{x}, r/5)\subset B(\hat{q},r)$, and since $\frac{100}{s_0}\epsilon < \epsilon'$, we have proven the claim if $k=0$.

Now suppose $k>0$. Let $(\tilde{X},\tilde{x},\tilde{\phi})$ be an $A$-LC, $L$-LQ triple such that $(U,x,\phi)$ and $(\tilde{X},\tilde{x},\tilde{\phi})$ are $\epsilon$-close at scale $4AL^2(1+2L^2)s_1^k r/10$. (Such a triple is provided by Lemma \ref{lem:rescaling} and the fact that $x\in Y$.) 

By induction, we also have an $A$-LC, $L$-LQ plane $(\tilde{Z}, \tilde{z}, \tilde{\psi})$ such that $(U,x,\phi)$ is $\epsilon'$-close to  $(\tilde{Z}, \tilde{z}, \tilde{\psi})$ at scale $s_1^{k-1} r/10$ and such that $\tilde{\psi}$ is $L$-bilipschitz on $B(\tilde{z}, s_1^{k-1}r/5)$. 

Applying Lemma \ref{lem:closeness}, this means that $(U,x,\phi)$ is $\frac{\epsilon'}{4AL^2(1+2L^2)s_1}$-close to  $(\tilde{Z}, \tilde{z}, \tilde{\psi})$ at the new scale $4AL^2(1+2L^2) s_1^{k} r/10$.

By another use of Lemma \ref{lem:closeness}, we also have that $(\tilde{X},\tilde{x},\tilde{\phi})$ and $(\tilde{Z}, \tilde{z}, \tilde{\psi})$ are $2(\epsilon+\frac{\epsilon'}{4AL^2(1+2L^2)s_1})$-close at scale $8AL^2(1+2L^2) s_1^k r/10$.

By Lemma \ref{lem:semmes}, they are therefore continuously $2\Lambda(\epsilon+\frac{\epsilon'}{8AL^2(1+2L^2)s_1})$-close at scale
$$t=4AL^2(1+2L^2) s_1^k r/10.$$
Observe that $t< s_1^{k-1} r/10$ by our choice of $s_1$.

By our choice of $\epsilon$ and $\epsilon'$, we have 
$$ 2\Lambda(\epsilon+\frac{\epsilon'}{8AL^2(1+2L^2)s_1}) < 1/100A^3L^3.$$
So, to recap, $(\tilde{X},\tilde{x},\tilde{\phi})$ and $(\tilde{Z}, \tilde{z}, \tilde{\psi})$ are continuously $\delta$-close at scale $t$ (for $\delta<\frac{1}{100A^3L^3}$). Furthermore $\tilde{\psi}$ is $L$-bilipschitz on $B(\tilde{z}, s_1^{k-1} r/5) \supset B(\tilde{z},22A^2t)$ in $\tilde{Z}$.

Therefore, Lemma \ref{lem:degree1} implies that $\tilde{\phi}$ is $L$-bilipschitz on
$$ B(\tilde{x}, t/2AL^2(L+2L^2)) = B(\tilde{x}, s_1^k r/5).$$ 

Now, the fact that $(U,x,\phi)$ and $(\tilde{X},\tilde{x},\tilde{\phi})$ are $\epsilon$-close at scale $4AL^2(1+2L^2)s_1^k r/10$ implies that they are $4AL^2(1+2L^2)\epsilon$-close at scale $s_1^k r/10$. Since $4AL^2(1+2L^2)\epsilon < \epsilon'$, this completes the proof of Claim \ref{claim:iteration}.

\end{proof}

With Claim \ref{claim:iteration} proven, Lemma \ref{lem:iteration} now follows: Let $x,y$ be any points of $Y\cap B(q, r/20)$. Choose $k\geq 0$ such that
$$ s_1^{k+1} r/10 \leq d(x,y) < s_1^k r/10$$

By Claim \ref{claim:iteration}, $(U,x,\phi)$ is $\epsilon'$-close to a triple $(\tilde{X}, \tilde{x}, \tilde{\phi})$ at scale $s_1^k r/10$, for which $\tilde{\phi}$ is $L$-bilipschitz on $B(\tilde{x}, s_1^k r/5)$.

It follows that
\begin{align*}
|\phi(x) - \phi(y)| &\geq L^{-1} (d(x,y) - 2\epsilon's_1^k r/10) - 2\epsilon's_1^k r/10\\&\geq (L^{-1}-2L^{-1}\epsilon'(s_1)^{-1} - 2\epsilon'(s_1)^{-1})d(x,y)\\ 
&\geq (2L)^{-1}d(x,y).
\end{align*}

\end{proof}

We now make one further decomposition of $Y$. Since $U$ is $(C,R)$-uniformly pointwise doubling, for some $C\geq 1$ and $R>0$, there is a constant $C'=C'(C,A(\lambda),L)\geq 1$ such that if $q\in U$, $r<R$ and
$$B(q,s_0 r) \subseteq B(p,r),$$ 
then
\begin{equation}\label{eqn:Wdense}
\mu(B(p,r)) \leq C'\mu(B(q,s_0r/10)),
\end{equation} 
where $s_0=s_0(L,D,A)$ is the constant from Proposition \ref{prop:subball} which was already fixed.

For $r_1\in (0,r_0)$, let
\begin{equation}\label{eqn:Wdef}
 W_{r_1} = \left\{ p\in Y : \frac{\mu(Y\cap B(p,r))}{\mu(B(p,r))} \geq 1-(2C')^{-1} \text{ for all } r\in (0,r_1)\right\}.
\end{equation}
By the Lebesgue density theorem, $\mu$-a.e. point of $Y$ is in $W_{r_1}$ for some choice of $r_1 \in (0,r_0) \cap \{1, 1/2, 1/3, \dots\}$.

Fix $r_1\in (0,r_0)$ and let $W = W_{r_1}\subset Y$. We will show that $\phi$ is bilipschitz on a $W$-neighborhood of each point of $W$.

\begin{lemma}\label{lem:porous}
There is a constant $s_2>0$ (depending only on $D$, $A(\lambda)$, and $L$) with the following property:

Let $p$ be a point of $W$ and fix $r<r_1/2$. Then there is a point $q\in Y$ and a ball $B(q, s_2 r) \subset B(p,r)$ such that $\phi|_{B(q,s_2 r) \cap Y}$ is $2L$-bilipschitz.
\end{lemma}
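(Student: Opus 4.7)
The plan is to combine Proposition \ref{prop:subball} (which produces a bilipschitz sub-ball for any LQ model map on an LC doubling plane) with Lemma \ref{lem:iteration} (which upgrades closeness to a model bilipschitz map into an honest bilipschitz statement on $Y$). The density hypothesis in the definition of $W$ will let us reposition the center of the model bilipschitz sub-ball onto a genuine point of $Y$.

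Concretely, first I would invoke Lemma \ref{lem:rescaling} at the point $p\in Y\subset W$ and the scale $r<r_1<r_0$ to obtain an $A$-LC, $L$-LQ triple $(\hat X,\hat p,\hat\phi)$ that is $\epsilon$-close to $(U,p,\phi)$ at scale $r$, with the corresponding $\epsilon r$-isometries $f\colon B(p,r/\epsilon)\to\hat X$ and $g\colon B(\hat p,r/\epsilon)\to U$ from Definition \ref{def:close}. Next, I would apply Proposition \ref{prop:subball} to the ball $B(\hat p,r/2)$ inside $\hat X$, producing a point $\hat q\in B(\hat p,r/2)$ and a sub-ball $B(\hat q,s_0 r/2)\subset B(\hat p,r/2)$ on which $\hat\phi$ is $L$-bilipschitz. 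Transporting this center back into $U$, I would set $q^*=g(\hat q)\in U$; the isometry estimate gives $d(q^*,p)\le r/2+2\epsilon r<3r/4$.

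To locate the actual point $q\in Y$ required by the lemma, I would use the density defining $W$. Since $d(q^*,p)+s_0r<r$ for $\epsilon,s_0$ small, the ball $B(q^*,s_0 r)$ is contained in $B(p,r)$, so the uniform-doubling inequality \eqref{eqn:Wdense} yields $\mu(B(q^*,s_0 r/10))\ge (C')^{-1}\mu(B(p,r))$. Combining this with the density estimate $\mu(B(p,r)\setminus Y)\le (2C')^{-1}\mu(B(p,r))$ from the definition \eqref{eqn:Wdef} of $W$ gives
\[
\mu\bigl(Y\cap B(q^*,s_0 r/10)\bigr)\ge (2C')^{-1}\mu(B(p,r))>0,
\]
so we can pick a point $q\in Y\cap B(q^*,s_0 r/10)$.

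Finally, I would verify the hypotheses of Lemma \ref{lem:iteration} at this $q$ on an appropriately chosen scale. Since $d(q,p)<3r/4$, applying Lemma \ref{lem:closeness}\eqref{closeness1} with $\lambda=s_0/10$ shows that $(U,q,\phi)$ and $(\hat X,f(q),\hat\phi)$ are $\tfrac{10\epsilon}{s_0}$-close at scale $s_0 r/10$. The triangle inequality bound $d(f(q),\hat q)\le d(q,q^*)+\epsilon r+d(f(g(\hat q)),\hat q)\le s_0 r/10+2\epsilon r$ together with the bilipschitz property of $\hat\phi$ on $B(\hat q,s_0 r/2)$ gives, for $\epsilon$ sufficiently small, that $\hat\phi$ is $L$-bilipschitz on the whole ball $B(f(q),s_0 r/10)$. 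Lemma \ref{lem:iteration} (applied at scale $s_0 r/10<r_0$) then produces the desired conclusion that $\phi$ is $2L$-bilipschitz on $B(q,s_0 r/200)\cap Y$, and $B(q,s_0r/200)\subset B(p,r)$ is immediate from the bound on $d(q,p)$. Setting $s_2=s_0/200$ finishes the proof. The main obstacle is really just the constant bookkeeping: one must choose $\lambda$ in Lemma \ref{lem:closeness} so that the resulting closeness parameter is at most $\tfrac{10\epsilon}{s_0}$ while simultaneously keeping the bilipschitz ball $B(\hat q,s_0 r/2)$ large enough, after the center shift from $\hat q$ to $f(q)$, to contain the scale ball required by Lemma \ref{lem:iteration}.
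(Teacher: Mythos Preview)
Your proposal is correct and follows essentially the same argument as the paper's proof: invoke Lemma~\ref{lem:rescaling} to get an $A$-LC, $L$-LQ model triple at scale $r$, apply Proposition~\ref{prop:subball} in the model to locate a bilipschitz sub-ball, transport its center to $U$ via the closeness maps, use the density condition defining $W$ together with \eqref{eqn:Wdense} to find a nearby point $q\in Y$, recenter via Lemma~\ref{lem:closeness}\eqref{closeness1}, and conclude with Lemma~\ref{lem:iteration}. Your $q^*$ is the paper's $q_0$ and your $f(q)$ is the paper's $\hat q_0$; the constant bookkeeping you flag (e.g.\ ensuring $B(q^*,s_0r)\subset B(p,r)$, which needs $s_0$ bounded away from $1$) is harmless since $s_0$ may always be shrunk.
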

\begin{proof}
By Lemma \ref{lem:rescaling}, there is an $A$-LC, $L$-LQ topological plane $(\hat{X}, \hat{p}, \hat{\phi})$ such that $(U,p,\phi)$ and $(\hat{X}, \hat{p}, \hat{\phi})$ are $\epsilon$-close at scale $r$. 

By Proposition \ref{prop:subball}, there is a ball $B(\hat{q}, s_0 r/2) \subseteq B(\hat{p}, r/2)$ on which $\hat{\phi}$ is $L$-bilipschitz.

It follows from Lemma \ref{lem:closeness} that, for some $q_0\in U$, $B(q_0, s_0r/2)\subset B(p,r)$ and in addition
$$ (\hat{X}, \hat{q}, \hat{\phi}) \text{ and } (U,q_0,\phi) \text{ are } \frac{2}{s_0}\epsilon\text{-close at scale } s_0r/2.$$ 

Since $r<r_1$ and $p\in W$, there must be a point $q\in Y \cap B(q_0, s_0r/10)$. It follows from Lemma \ref{lem:closeness} that there is a point $\hat{q}_0\in B(\hat{q}, s_0r/5)$ such that
$$ (\hat{X}, \hat{q}_0, \hat{\phi}) \text{ and } (U,q,\phi) \text{ are } \frac{10}{s_0}\epsilon\text{-close at scale } s_0r/10.$$

We know that $\hat{\phi}$ is $L$-bilipschitz on $B(\hat{q},s_0r/2)\supseteq B(\hat{q}_0, s_0r/10)$.

Therefore, by Lemma \ref{lem:iteration}, $\phi$ is $2L$-bilipschitz on $B(q, s_0r/200) \cap Y$. This completes the proof.

\end{proof}

The proof of Theorem \ref{thm:main} is now completed as follows. For each of the countably many choices of $L,\lambda\in\mathbb{N}$ and $r_0\in\{1, 1/2, 1/3, \dots\}$, we obtain a closed set $Y=Y_{L,\lambda, r_0}\subset U$ as in \eqref{eqn:Ydef}. For each further choice of $r_1\in\{1, 1/2, 1/3, \dots\}$, we obtain a set $W = W_{L,\lambda, r_0, r_1} \subseteq Y$. The union of these sets $W$ over all countably many choices of parameters covers $\mu$-almost all of $U$.

Consider the following relatively open subset $W'\subset W$:
$$ W' = \{ p\in W\subset Y:  \phi \text{ is bilipschitz on } B(p,t) \cap Y \text{ for some } t>0\}.$$
By Lemma \ref{lem:porous}, the set $W\setminus W'$ is porous in $Y$, hence porous in $U$. Because $U$ is uniformly pointwise doubling, it follows that the set $W\setminus W'$ has $\mu$-measure zero. (See Remark \ref{rmk:ptwiseporous}.) Then
\begin{align*}
W' &\subseteq \bigcup_{j\in\mathbb{N}} \{p\in W': \phi|_{B(p,1/j)\cap Y} \text{ is bilipschitz}\}\\
&\subseteq \bigcup_{j\in\mathbb{N}} \bigcup_{i\in \mathbb{N}} \left(B(p^j_i,1/j) \cap Y\right),
\end{align*}
where $\{p^j_i\}_{i=1}^\infty$ is any countable dense subset of the set 
$$\{p\in W': \phi|_{B(p,1/j)\cap Y} \text{ is bilipschitz}\}.$$

This shows that $W'$ is covered by countably many Borel sets $B(p^j_i,1/j) \cap Y$ on which $\phi$ is bilipschitz. Since $\mu(W\setminus W') = 0$, it follows that $\mu|_W$ is $2$-rectifiable. Writing $U$, up to measure zero, as a countable union of sets $W$ as above, we have shown that $\mu|_U$ is $2$-rectifiable. This completes the proof of Theorem \ref{thm:main}.
\end{proof}

\begin{proof}[Proof of Corollary \ref{cor:main}]
We can now establish Corollary \ref{cor:main} as follows. Since $X$ is metrically doubling, $\mu$ is pointwise doubling, and $\mu$ assigns measure zero to porous sets in $X$, we see that the blowups of $U$ and the blowups of $X$ coincide at almost every point of $U$. (See Remark \ref{rmk:ptwiseblowup}.) As in the proof of Lemma \ref{lem:LQlimit}, every blowup of $X$ is a topological plane. Hence, by Theorem \ref{thm:main}, $n\leq 2$.

If $n=2$, then $\mu|_U$ is $2$-rectifiable, again by Theorem \ref{thm:main}.
\end{proof}

\begin{rmk}
It is straightforward to see that the same broad outline, much simplified, can be used to show the $1$-dimensional analog of Theorem \ref{thm:main}: Let $(X,d,\mu)$ be a pointwise doubling space and $U\subset X$ be a Borel subset such that, for $\mu$-a.e. $x\in U$, each blowup of $X$ at $x$ is homeomorphic to $\RR$. If $\mu|_U$ has $n$ $\phi$-independent Alberti representations for $\phi:X\rightarrow\RR^n$ Lipschitz, then $n\leq 1$, with equality only if $\mu|_U$ is $1$-rectifiable.

Indeed, in this case, the blowups of the mapping $\phi$ at generic points are globally bilipschitz (moreover, affine), and the analog of Lemma \ref{lem:rescaling} essentially yields $1$-rectifiability.
\end{rmk}

\section{Examples}\label{sec:examples}

We first note a simple example which shows that rectifiability does not follow from simply assuming that $(X,d,\mu)$ is a doubling, LLC, topological surface, even if it supports a single Alberti representation.
\begin{example} \label{example1}

Let $X= \RR \times Y$, where $Y$ is the ``snowflaked'' metric space $(\RR, |\cdot|^{1/2})$. Then $(X,d,\mathcal{H}^3)$ is a doubling metric measure space which is also an LLC topological surface. Furthermore, the restriction of $\mu$ to every compact subset of $X$ supports one Alberti representation, simply given by Fubini's theorem in the $\RR$ factor.

On the other hand, no Lipschitz map from a compact set in $\RR^2$ can have an image of positive $\mathcal{H}^3$-measure in any metric space, and so $\mathcal{H}^3|_U$ is not $2$-rectifiable for any $U\subset X$ of positive measure. Moreover, the space $X$ is \textit{purely $2$-unrectifiable}, in the sense that $\mathcal{H}^2(f(E))=0$ for every compact $E\subset \RR^2$ and Lipschitz $f:E\rightarrow X$.
\end{example}

For a related and more interesting example, see the appendix by Schul and Wenger in \cite{SW10}.

The next two examples show that, in the absence of a quantitative topological assumption, such as LLC or having blowups that are topological planes, either part of Theorem \ref{thm:main} or Corollary \ref{cor:main} may fail, even if $(X,d,\mu)$ is a pointwise doubling topological surface supporting multiple Alberti representations.

\begin{example}\label{example2}
Let $C$ be a Jordan curve in the plane (homeomorphic to the circle) of positive two-dimensional Lebesgue measure. In fact, by a construction of Sierpi\'nski-Knopp (see \cite{Sa94}, Section 8.3), we can ensure that the restriction of Lebesgue measure $\mathcal{L}^2$ to $C$ is Ahlfors $2$-regular, i.e., satisfies
$$ M^{-1}r^2 \leq \mathcal{L}^2(C \cap B(x,r)) \leq Mr^2$$
for some $M\geq 1$, all $x\in X$, and all $r\in (0,1)$.

Let $X = C \times \RR$ in $\RR^3$, which is a topological surface. Equip $X$ with the restriction of the distance $|\cdot|$ from $\RR^3$ and with the restriction of $3$-dimensional Lebesgue measure, which is doubling on $X$. Then, as a positive measure set in $\RR^3$, $X$ is a Lipschitz differentiability space of dimension $3$, in the sense of \cite{Ba15}. In particular, there are Borel sets $U_i\subset X$ and Lipschitz maps $\phi_i:X\rightarrow \RR^3$ such that $\mu(X\setminus U_i)=0$ and $\mu|_{U_i}$ supports three $\phi_i$-independent Alberti representations for each $i$ (see \cite{Ba15}, Theorem 6.6.).

Thus, the upper bound $n\leq 2$ on the number of independent Alberti representations in Theorem \ref{thm:main} may fail in the absence of the assumption on blowups, and the upper bound in Corollary \ref{cor:main} may fail in the absence of the LLC assumption. 
\end{example}

\begin{example}\label{example3}
Consider the same topological surface $X$ in $\RR^3$ as in the previous example, but now consider $\RR^3$ equipped with the Heisenberg group metric $d_{\mathbb{H}}$. Endow $X$ with the restriction of $d_{\mathbb{H}}$ and the restriction of $3$-dimensional Lebesgue measure, which is the same (up to constant factors) as $\mathcal{H}^4$ in the Heisenberg group. The space $(X,d_\mathbb{H},\mathcal{H}^4)$ is pointwise doubling by the Lebesgue density theorem in the Heisenberg group, and porous subsets of $X$, being also porous subsets of the Heisenberg group, have $\mathcal{H}^4$-measure zero.

As a positive measure set in the Heisenberg group, $(X,d_\mathbb{H},\mathcal{H}^4)$ is also a Lipschitz differentiability space with $X$ itself a chart of dimension $2$. Thus, as in the previous example, it admits a Borel decomposition into $U_i$ such that each $\mu|_{U_i}$ supports two $\phi_i$-independent Alberti representations, for some Lipschitz $\phi_i:X\rightarrow\RR^2$.

However, no Lipschitz map from a compact set in $\RR^2$ can have an image of positive $\mathcal{H}^4$-measure in any metric space, and so $\mathcal{H}^4|_U$ cannot be $2$-rectifiable for any $U\subset X$ of positive measure. (In fact, as in Example \ref{example1}, more is true here: $(X,d_\mathbb{H})$ is purely $2$-unrectifiable, as a consequence of the pure $2$-unrectifiability of the Heisenberg group (\cite{AK00}, Theorem 7.2).)

Thus, the conclusion of $2$-rectifiability in the case of equality may fail in Theorem \ref{thm:main} in the absence of the assumption on blowups, and in Corollary \ref{cor:main} in the absence of the LLC assumption. 
\end{example}

\begin{appendix}
\section{Blowups and annular linear connectivity}\label{sec:appendix}
Recall the notion of annular linear connectivity (ALC) from Definition \ref{def:LLCALC}. The goal of this appendix is to prove the following proposition, which allows a self-strengthening of the hypotheses in Theorem \ref{thm:main}.

We will use the notion of a \textit{cut point} $y$ in a connected space $Y$: a point such that $Y\setminus \{y\}$ is disconnected.

\begin{prop}\label{prop:blowupsALC}
Let $(X,d,\mu)$ be complete and metrically doubling with $\mu$ pointwise doubling. Let $U\subset X$ be a Borel subset such that, for $\mu$-a.e. $x\in U$, each blowup of $X$ at $x$ is connected and has no cut points.

Then for $\mu$-a.e. $x\in U$, there is a constant $\lambda = \lambda(x)$ such that each blowup of $X$ at $x$ is $\lambda$-ALC.
\end{prop}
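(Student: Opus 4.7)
The plan is a proof by contradiction: if the conclusion fails on a set $E \subseteq U$ of positive measure, I would extract a blowup at a Lebesgue density point of $E$ exhibiting a cut point, contradicting the hypothesis.

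First I would reduce via Lemma \ref{lem:ptwisedoubling} to the case where $U$ is closed, metrically $D$-doubling, and uniformly pointwise doubling, while the hypothesis on blowups still holds at every $x \in U$. Assuming the conclusion fails, there is $E \subseteq U$ with $\mu(E) > 0$ such that for each $x \in E$ and each $\lambda \in \mathbb{N}$, some blowup of $X$ at $x$ fails $\lambda$-ALC. The key detection step is: whenever some blowup $(\hat X, \hat x)$ of $X$ at $x$ fails $\lambda$-ALC with witnesses $\hat p, \hat y_1, \hat y_2$ at scale $1$, there exist arbitrarily small $s > 0$ and points $p, y_1, y_2 \in X$ with $d(x,p) \le C s$, $y_1, y_2 \in A(p, s, 2s)$, and no continuum in $X$ joining $y_1, y_2$ contained in $A(p, s/\lambda, 2\lambda s)$. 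This will follow from a standard Blaschke selection argument: if continua existed at arbitrarily small scales, they would sub-converge to a continuum in $\hat X$ witnessing $\lambda$-ALC, contradicting the failure. Using the Lebesgue density theorem for pointwise doubling measures, I would further arrange that the witness point $p$ lies in $E$.

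Next, I would fix a Lebesgue density point $x_\infty \in E$ and apply the detection step at $\lambda = k$ for each $k \in \mathbb{N}$, obtaining scales $s_k \to 0$, points $p_k \in E$ with $d(x_\infty, p_k) = o(s_k)$, and witnesses $y_k^1, y_k^2 \in A(p_k, s_k, 2 s_k)$ that cannot be joined by a continuum in $A(p_k, s_k/k, 2 k s_k)$. By metric doubling, the pointed rescalings $(s_k^{-1} X, p_k)$ sub-converge in the pointed Gromov--Hausdorff sense to a limit $(\hat X_\infty, \hat p_\infty)$; because $d(x_\infty, p_k) = o(s_k)$, the basepoints merge in the limit, and $(\hat X_\infty, \hat p_\infty)$ is in fact a blowup of $X$ at $x_\infty$. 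By hypothesis, $\hat X_\infty$ is therefore connected with no cut points, and the sub-converged witnesses give points $\hat y_1, \hat y_2 \in A(\hat p_\infty, 1, 2) \subset \hat X_\infty$.

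The hard part, the main expected obstacle, will be to show that $\hat p_\infty$ is a cut point of $\hat X_\infty$ separating $\hat y_1$ from $\hat y_2$, contradicting the no-cut-point hypothesis. A direct negation of a putative avoidance continuum $\gamma \subset A(\hat p_\infty, \delta, R) \subset \hat X_\infty$ does not go through, because continua are only upper semi-continuous under pointed Gromov--Hausdorff convergence: one cannot in general approximate $\gamma$ by continua in the $s_k^{-1} X$ to contradict the $k$-ALC failures. The workaround, which is the ``self-improvement'' technique alluded to in the paper, is to iterate the construction: instead of relying on a single blowup at $x_\infty$, I would work with the full family of blowups of $X$ at points of $E$ near $x_\infty$, use that this family is closed under further pointed Gromov--Hausdorff limits together with the Lebesgue density of $E$ (so that approximating points can be chosen within $E$), and propagate the failure of $\lambda$-ALC for all $\lambda$ into a genuine topological cut point in some tangent space at a point of $E$---contradicting the hypothesis at that point. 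The careful diagonalization needed to execute this plan is what the appendix would develop as a modification of standard techniques in the literature.
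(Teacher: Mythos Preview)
Your overall shape (GH compactness plus contradiction to produce a cut point in a blowup) matches the paper's intent, but two essential technical ingredients are missing, and without them the argument does not close.

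First, your claim that the witness basepoints satisfy $d(x_\infty,p_k)=o(s_k)$ is unjustified and in general false. Failure of $\lambda$-ALC in a blowup $(\hat X,\hat x)$ occurs at \emph{some} point $\hat p\in\hat X$ and some radius; after normalizing the radius to $1$, the point $\hat p$ may sit at any finite distance $M$ from $\hat x$, so the pulled-back witness in $X$ satisfies only $d(x,p)\approx M s$, i.e.\ $d(x_\infty,p_k)=O(s_k)$ with a constant that can vary with $k$. Consequently the limit of $(s_k^{-1}X,p_k)$ is a tangent of $X$ at $x_\infty$ \emph{with shifted basepoint}, and it is not automatic that this is again a blowup of $X$ at $x_\infty$. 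The paper handles this by invoking Le~Donne's theorem (\cite{Le11}, Theorem~1.1): for $\mu$-a.e.\ $x$, the collection $\mathcal{B}_x$ of pointed blowups at $x$ is closed under change of basepoint (and rescaling and GH limits). This is exactly what lets one apply Lemma~\ref{lem:ALC} to the collection $\mathcal{C}_x$ of blowups at a single point; your proposal never invokes or replaces this fact.

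Second, you correctly isolate the real obstacle --- continua only go \emph{forward} under GH convergence, so one cannot directly contradict the $k$-ALC failures by lifting a limit continuum back to $s_k^{-1}X$ --- but your proposed workaround (``iterate, use density of $E$, propagate failure'') is a hope rather than an argument. The paper's fix is concrete and different from what you sketch: it first upgrades ``connected'' to \emph{uniform linear connectedness} across the collection (Lemma~\ref{lem:lincon}), and then uses Lemma~\ref{lem:discreteLLC} to replace ``continuum in an annulus'' by ``finite $\tfrac{1}{2L}\operatorname{dist}(P,p)$-chain in an annulus''. Finite chains of points \emph{do} transfer both ways through $\epsilon$-isometries, so the failure of the chain condition at level $k$ in $X_k$ survives to the limit and forces a genuine cut point there. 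This discretization step is the substantive idea you are missing; without it (or a genuine substitute), the contradiction cannot be completed.
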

In particular, Proposition \ref{prop:blowupsALC} applies when the blowups of $X$ at almost every point of $U$ are homeomorphic to $\RR^2$.

The following preliminary definition will be useful.
\begin{definition}
We call a metric space $X$ \textit{linearly connected} if there is a constant $L\geq 1$ such that, for all $x,y\in X$, there is a compact, connected set containing $x$ and $y$ of diameter at most $Ld(x,y)$.
\end{definition}

\begin{lemma}\label{lem:ALC}
Let $\mathcal{C}$ be a collection of complete, metrically $D$-doubling metric spaces with the following property: For each sequence $\{r_k\}$ of positive real numbers and each sequence $\{(X_k, p_k)\}$ such that $X_k \in \mathcal{C}$, $p_k\in X_k$, and $\{r_k^{-1} X_k, p_k\}$ converges in the pointed Gromov-Hausdorff sense, the limit is connected and has no cut points.

Then there is a constant $\lambda$ such that all elements of $\mathcal{C}$ are $\lambda$-ALC.
\end{lemma}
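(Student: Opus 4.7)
I would prove Lemma~\ref{lem:ALC} by contradiction using pointed Gromov--Hausdorff compactness. Suppose no such $\lambda$ exists. Then for each $n \in \mathbb{N}$, some $X_n \in \mathcal{C}$ fails to be $n$-ALC: there exist $p_n \in X_n$, $r_n \in (0, \diam X_n]$, and $x_n, y_n \in A_{X_n}(p_n, r_n, 2r_n)$ such that no continuum in $A_{X_n}(p_n, r_n/n, 2nr_n)$ contains both $x_n$ and $y_n$. Rescaling to $Y_n := r_n^{-1} X_n$, the $Y_n$ are uniformly $D$-doubling, so after passing to a subsequence I obtain a pointed Gromov--Hausdorff limit $(Y_n, p_n) \to (Y_\infty, p_\infty)$ with $x_n \to x_\infty$, $y_n \to y_\infty$, and $d_{Y_\infty}(p_\infty, x_\infty), d_{Y_\infty}(p_\infty, y_\infty) \in [1, 2]$. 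By the hypothesis on $\mathcal{C}$, $Y_\infty$ is connected with no cut points, and it is proper since complete and $D$-doubling.

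The main step is to exhibit in $Y_\infty$ a continuum $K$ containing $x_\infty, y_\infty$ with $K \subset A_{Y_\infty}(p_\infty, \rho, R)$ for some $\rho > 0$ and $R < \infty$. Because $p_\infty$ is not a cut point, $Y_\infty \setminus \{p_\infty\}$ is connected. I would build $K$ via the exhaustion: for each $M > 0$ let $C_M$ denote the connected component of $x_\infty$ in the compact set $\overline{B}(x_\infty, M) \setminus B(p_\infty, 1/M)$. Each $C_M$ is a continuum, and using properness of $Y_\infty$ together with the absence of cut points at every point of $Y_\infty$ (a hypothesis that propagates to the blowup-closure of $\mathcal{C}$, since iterated blowups of elements of $\mathcal{C}$ still arise as pointed GH limits of suitably rescaled sequences in $\mathcal{C}$), I would show $y_\infty \in C_M$ for all $M$ sufficiently large. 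Setting $K := C_M$, $\rho := 1/M$, and $R := M + 2$ then yields the desired continuum.

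Next I would transfer $K$ back. Choose $\epsilon > 0$ much smaller than $\rho$ and pick a finite $\epsilon$-chain $x_\infty = z_0, z_1, \ldots, z_N = y_\infty$ inside $K$, which is possible because $K$ is a compact connected metric space. Via the Gromov--Hausdorff approximation, for large $n$ I obtain approximating points $z_j^{(n)} \in Y_n$ with $z_0^{(n)} = x_n$, $z_N^{(n)} = y_n$, consecutive distances at most $2\epsilon$, and all contained in $A_{Y_n}(p_n, \rho/2, 2R)$. Invoking the continuum-construction argument at the smaller scale $\epsilon$ around each $z_j^{(n)}$---using that blowups of $Y_n$ at $z_j^{(n)}$ are again connected with no cut points---yields short continua joining consecutive $z_j^{(n)}, z_{j+1}^{(n)}$ inside $B_{Y_n}(z_j^{(n)}, 3\epsilon) \subset A_{Y_n}(p_n, \rho/2, 2R)$. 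Their union is a continuum in $A_{Y_n}(p_n, \rho/2, 2R)$ containing both $x_n$ and $y_n$, and for $n$ so large that $\rho/2 > 1/n$ and $2R < 2n$, this contradicts the choice of $(X_n, p_n, r_n, x_n, y_n)$.

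The principal obstacle will be the exhaustion argument in the second paragraph: manufacturing a bounded continuum $K$ avoiding $p_\infty$ given only connectedness, no cut points, and properness, without a priori local connectedness of $Y_\infty$. One must argue that the increasing family $\{C_M\}$ eventually merges to capture $y_\infty$, which rests on the no-cut-point property holding at \emph{every} point of $Y_\infty$---the propagation of the hypothesis through the blowup-closure of $\mathcal{C}$ is essential here. The third paragraph's ``short continua'' step is a smaller-scale instance of the same claim, so the proof must be organized so that the continuum-existence statement is established once, uniformly at all scales in any member of the blowup-closure, then reused.
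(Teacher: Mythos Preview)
Your overall contradiction strategy---rescale, pass to a pointed Gromov--Hausdorff limit $(Y_\infty,p_\infty)$, find a continuum in $Y_\infty\setminus\{p_\infty\}$ joining $x_\infty$ to $y_\infty$, and transfer a finite chain back---is exactly the paper's. The difference is organizational, and it matters.

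The paper first proves, as a separate preliminary result, that there is a constant $L$ such that every space in $\mathcal{C}$ (and hence every limit $Y_\infty$) is $L$-linearly connected: any two points $x,y$ lie in a continuum of diameter at most $L\,d(x,y)$. This is itself obtained by a compactness argument against the same hypothesis on $\mathcal{C}$, citing Kinneberg. With uniform linear connectivity in hand, both of your acknowledged obstacles disappear cleanly. For the continuum in $Y_\infty$: since $Y_\infty$ is linearly connected, the set of points in $Y_\infty\setminus\{p_\infty\}$ joinable to $x_\infty$ by a continuum avoiding $p_\infty$ is open, and so is its complement; connectedness of $Y_\infty\setminus\{p_\infty\}$ finishes it. For the transfer step: one only needs a \emph{discrete} chain $z_0,\dots,z_m$ in an annulus of $Y_k$ with consecutive gaps at most $\frac{1}{2L}\operatorname{dist}(\{z_i\},p_k)$; linear connectivity then upgrades the chain to a continuum in a slightly larger annulus (this is the content of the paper's Lemma~\ref{lem:discreteLLC}).

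Your exhaustion argument in the second paragraph and the recursive ``short continua'' in the third are both attempts to get these conclusions without having linear connectivity available. As written they are gaps: connected components of $\overline{B}(x_\infty,M)\setminus B(p_\infty,1/M)$ need not eventually capture $y_\infty$ using only properness and the absence of cut points, because $Y_\infty$ is not a priori locally connected; and the third paragraph literally invokes the statement being proved at a smaller scale. You correctly diagnose in your final paragraph that the fix is to establish a uniform continuum-existence statement ``once, at all scales, in any member of the blowup-closure'' and then reuse it---but that statement \emph{is} uniform linear connectivity. Prove that first (by its own, simpler, contradiction/compactness argument), and your outline becomes the paper's proof.
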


Note that the hypotheses of Lemma \ref{lem:ALC} include the assumption that each element of $\mathcal{C}$ is itself is connected with no cut points.

We now explain how to prove Proposition \ref{prop:blowupsALC} given Lemma \ref{lem:ALC}.

\begin{proof}[Proof of Proposition \ref{prop:blowupsALC}]
We may assume, by Lemma \ref{lem:ptwisedoubling} and Remark \ref{rmk:ptwiseblowup}, that $U$ is complete, metrically $D$-doubling, and $(C,R)$-uniformly doubling, for constants $D\geq 1$, $C\geq 1$, $R>0$. Then, for a.e. $x\in U$, the blowups of $X$ at $x$ are the blowups of $U$ at $x$.

For each $x\in U$, let $\mathcal{B}_x$ denote the collection of all pointed metric spaces $(Z,p)$ that arise as blowups of $U$ at $x$. The collection $\mathcal{B}_x$ is closed under pointed Gromov-Hausdorff convergence.

Theorem 1.1 of \cite{Le11} shows that, for a.e. $x\in U$, if $(Z,p)\in \mathcal{B}_x$ and $q\in Z$, then $(Z,q)\in \mathcal{B}_x$. (Note that, although Theorem 1.1 of \cite{Le11} is stated for doubling measures, the proof relies only on the estimates provided by the fact that $U$ is $(C,R)$-uniformly doubling. This was also noted in Section 9 of \cite{BL15_RNP}.)

Let $\mathcal{C}_x$ be the collection of (unpointed)  metric spaces $Z$ that arise as blowups of $U$ at $x$. It follows from the previous paragraph (and the fact that rescalings of blowups are blowups) that, for a.e. $x\in U$, $(r^{-1}Z,p)$ is in $\mathcal{B}_x$ for all $Z\in\mathcal{C}_x$, $p\in Z$, and $r >0$.

Hence, any pointed Gromov-Hausdorff limit of rescaled pointed elements of $\mathcal{C}_x$ as in Lemma \ref{lem:ALC} is an element of $\mathcal{B}_x$. By our assumption on $U$, such a limit must be connected with no cut points. 

Thus, for a.e. $x\in U$, the collection $\mathcal{C}_x$ satisfies the hypotheses of Lemma \ref{lem:ALC}. It follows that there is a constant $\lambda=\lambda(x)$ such that each element of $\mathcal{C}_x$ (in particular, each blowup of $U$ at $x$) is $\lambda$-ALC. This completes the proof of Proposition \ref{prop:blowupsALC}.
\end{proof}

It remains to prove Lemma \ref{lem:ALC}. To do so, we will use the following Lemma, which is a minor modification of Proposition 5.4 of \cite{Ki15}. 

\begin{lemma}\label{lem:lincon}
Let $\mathcal{C}$ be a collection of metric spaces satisfying the hypotheses of Proposition \ref{lem:ALC}. Then there is a constant $L\geq 1$ such that each element of $\mathcal{C}$ is linearly connected with constant $L$.
\end{lemma}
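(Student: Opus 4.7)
The strategy is a rescaling argument by contradiction, following the scheme of \cite[Proposition~5.4]{Ki15}. Suppose the lemma fails: for each $n \in \mathbb{N}$ there exist $X_n \in \mathcal{C}$ and $x_n, y_n \in X_n$ such that every continuum in $X_n$ containing both has diameter greater than $n \cdot d(x_n, y_n)$. After replacing $X_n$ by the rescaled space $d(x_n, y_n)^{-1} X_n$, we may assume $d(x_n, y_n) = 1$, while the minimal such diameter $D_n$ still tends to $\infty$.

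Next, rescale again by $D_n^{-1}$ in order to view the obstruction at unit scale. Set $Z_n = D_n^{-1} X_n$, so that $d_{Z_n}(x_n, y_n) = 1/D_n \to 0$, and let $C_n \subset Z_n$ be an optimal continuum of diameter $1$ through $x_n$ and $y_n$. By the metric $D$-doubling hypothesis, a subsequence of $(Z_n, x_n)$ converges in the pointed Gromov-Hausdorff sense to a limit $(Z_\infty, x_\infty)$, with $y_n \to x_\infty$, and the $C_n$ Hausdorff sub-converge on compact sets to a continuum $C_\infty \subset Z_\infty$ of diameter $1$ through $x_\infty$. The hypothesis of Lemma~\ref{lem:ALC}, applied both to this sequence directly and (via diagonalization) to iterated rescalings of it, guarantees that $Z_\infty$ and all of its pointed blowups at all points are connected with no cut points.

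The contradiction is then extracted from the geometry at $x_\infty$: since $x_\infty$ is not a cut point of $Z_\infty$, the space $Z_\infty \setminus \{x_\infty\}$ is connected, so the ``inbound side'' of $C_\infty$ at $x_\infty$ (approached by the $x_n$ along $C_n$) and the ``outbound side'' (approached by the $y_n$) can be joined by a continuum $K \subset Z_\infty$ avoiding a definite neighborhood of $x_\infty$. Concatenating $K$ with two short sub-arcs of $C_\infty$ near $x_\infty$ produces a continuum through $x_\infty$ of diameter strictly less than $1$. Transferring this continuum back to $Z_n$ via a fine Gromov-Hausdorff approximation---using the qualitative local connectedness of $Z_n$ that follows from the no-cut-point hypothesis applied to blowups of $Z_n$ at each scale---yields, for large $n$, a continuum in $Z_n$ containing both $x_n$ and $y_n$ of diameter less than $1$, contradicting the optimality of $C_n$.

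The main obstacle is this last transfer step, since Gromov-Hausdorff correspondences do not automatically preserve connectedness. The workaround is to approximate $K$ by a sufficiently fine $\eta$-net, transfer the net to $Z_n$, and then close up the discrete gaps by small sub-continua whose existence comes from applying the no-cut-point hypothesis to blowups of $Z_n$ at the net points at a scale just above $\eta$. The metric doubling property keeps the number and diameter of these filling arcs uniformly controlled, and choosing $\eta$ small relative to the distance from $K$ to $x_\infty$ allows the total diameter to remain below $1$, completing the contradiction.
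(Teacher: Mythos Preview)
Your outline has genuine gaps. A minor one first: you assume a minimizing continuum $C_n$ exists (i.e.\ $D_n<\infty$), but a priori the $X_n$ are only known to be connected, not continuum-wise connected. The more serious problem is what happens after your second rescaling. In $Z_\infty$ both endpoints $x_n,y_n$ have collapsed to the single point $x_\infty$, so there is no well-defined ``inbound'' versus ``outbound'' side of $C_\infty$ at $x_\infty$, and hence no mechanism by which a continuum in $Z_\infty$ through $x_\infty$ will, when transferred back, pass through \emph{both} of the distinct points $x_n$ and $y_n$ in $Z_n$. The no-cut-point property at $x_\infty$ lets you detour around $x_\infty$, but it cannot resurrect two endpoints that have merged in the limit. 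Even granting all this, your shortcut $K$ carries no diameter bound, so the concatenated set need not have diameter strictly less than $1$. Finally, your transfer step---closing up an $\eta$-net in $Z_n$ by ``small sub-continua whose existence comes from \dots\ blowups''---is circular: connectedness of a blowup of $Z_n$ does not by itself furnish continua in $Z_n$, and producing such continua with controlled diameter is exactly the linear connectedness you are trying to establish.

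The paper does not reprove this lemma; it simply invokes \cite[Proposition~5.4]{Ki15} and observes that the same compactness argument goes through for a uniformly doubling family $\mathcal{C}$ rather than a single space. That argument rescales only once, keeping $d(x_n,y_n)=1$, and uses only \emph{connectedness} of the limit $(X_\infty,x_\infty,y_\infty)$: connectedness yields a $\tfrac12$-chain from $x_\infty$ to $y_\infty$ inside some ball $B(x_\infty,R)$, which pulls back to a $\tfrac12$-chain in $B(x_n,R+1)$ for large $n$; since the hypothesis is scale-invariant, iterating refines these to chains of arbitrarily small mesh inside a fixed ball, and a Hausdorff limit produces the required continuum. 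The no-cut-point hypothesis plays no role here---it enters only later, in the proof of Lemma~\ref{lem:ALC}.
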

\begin{proof}
This is proven in Proposition 5.4 of \cite{Ki15}, in the case where $\mathcal{C}$ has a single element $X$ (in which case the Gromov-Hausdorff limits of pointed rescalings of $X$ are called ``weak tangents'' of $X$). However, an identical proof works under our assumption that all elements of $\mathcal{C}$ are metrically $D$-doubling, since the same compactness argument can be run. (Note that the boundedness assumption in Proposition 5.4 of \cite{Ki15} is not needed here, because we allow arbitrary scalings in the hypotheses of Proposition \ref{lem:ALC}.)
\end{proof}

\begin{lemma}\label{lem:discreteLLC}
Let $X$ be a $L$-linearly connected metric space that has the following property, for some $\mu\geq 1$:

For all $p\in X$ and $r\in (0, \diam(X)]$, and for all $x,y\in A(p, r, 2r)$, there is a finite set
$$P=\{x_0, x_1, \dots, x_n\} \subset A(p, r/\mu, 2\mu r)$$
such that
\begin{equation}\label{eqn:discreteLLC1}
 x_0 = x \text{ and } x_n = y,
\end{equation}
and
\begin{equation}\label{eqn:discreteLLC2}
d(x_i, x_{i+1}) \leq \frac{1}{2L} \dist(P, p) \text{ for each } i\in\{0,\dots, n-1\}.
\end{equation}

Then $X$ is $\lambda$-ALC, where $\lambda$ depends only on $\mu$.
\end{lemma}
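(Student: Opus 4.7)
The plan is to convert the discrete chain supplied by the hypothesis into a genuine continuum, one segment at a time, using $L$-linear connectivity; essentially the hypothesis already encodes a discrete form of ALC, and linear connectivity lets us "fill in" between consecutive points.

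Concretely, given $p \in X$, $r \in (0, \diam X]$, and $x, y \in A(p, r, 2r)$, first invoke the hypothesis to produce a finite set $P = \{x_0, x_1, \dots, x_n\} \subset A(p, r/\mu, 2\mu r)$ with $x_0 = x$, $x_n = y$, and $d(x_i, x_{i+1}) \leq \frac{1}{2L}\dist(P, p)$. For each $i$, apply $L$-linear connectivity to obtain a compact connected set $K_i$ containing $x_i$ and $x_{i+1}$ with
\[
\diam K_i \leq L \, d(x_i, x_{i+1}) \leq \tfrac{1}{2}\dist(P, p).
\]
Set $K = \bigcup_{i=0}^{n-1} K_i$. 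Since $x_{i+1} \in K_i \cap K_{i+1}$ and the union is finite, $K$ is a compact connected set containing $x$ and $y$.

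It remains to check that $K \subset A(p, r/\lambda, 2\lambda r)$ for some $\lambda$ depending only on $\mu$. For $z \in K_i$, the diameter bound gives $d(z, x_i) \leq \tfrac{1}{2}\dist(P, p)$, so since $x_i \in P$,
\[
d(z, p) \geq d(x_i, p) - \tfrac{1}{2}\dist(P, p) \geq \tfrac{1}{2}\dist(P, p) \geq \tfrac{r}{2\mu}.
\]
For the upper bound, note that $\dist(P, p) \leq d(x_0, p) = d(x, p) \leq 2r$ (using $x \in A(p, r, 2r)$), and $d(x_i, p) \leq 2\mu r$ since $x_i \in A(p, r/\mu, 2\mu r)$, so
\[
d(z, p) \leq d(x_i, p) + \tfrac{1}{2}\dist(P, p) \leq 2\mu r + r \leq 3\mu r.
\]
Taking $\lambda = 2\mu$ then ensures $K \subset A(p, r/\lambda, 2\lambda r)$, which is precisely the $\lambda$-ALC condition.

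There is no real obstacle here: the discrete hypothesis is essentially a combinatorial analog of ALC, and the factor $\frac{1}{2L}$ in the spacing condition was chosen exactly so that the $L$-linear connectivity fills each gap by a continuum whose diameter is at most half of $\dist(P, p)$, which is the amount of slack needed to keep every filled continuum bounded away from $p$ by the same order as $\dist(P, p)$ itself. The only care required is the bookkeeping ensuring $\dist(P, p) \leq 2r$ (to control the outer radius) and $\dist(P, p) \geq r/\mu$ (to control the inner radius), both immediate from $P \subset A(p, r/\mu, 2\mu r)$ together with $x \in A(p,r,2r)$.
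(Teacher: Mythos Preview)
Your proof is correct and follows essentially the same approach as the paper: invoke the discrete chain, fill in each gap using $L$-linear connectivity to get continua of diameter at most $\tfrac{1}{2}\dist(P,p)$, and verify that the union lies in $A(p, r/2\mu, 3\mu r)$, yielding $\lambda = 2\mu$. The only difference is that you spell out the inner and outer radius estimates explicitly, whereas the paper states the containment without this verification.
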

\begin{proof}
Consider any $p\in X$, $r\in (0, \diam(X)]$, and $x,y\in A(p, r, 2r)$. Let $P= \{x_0, \dots, x_n\}\subset A(p,r/\mu, 2\mu r)$ satisfy \eqref{eqn:discreteLLC1} and \eqref{eqn:discreteLLC2}.

For each $i\in \{0, \dots, n-1\}$ we use the linear connectedness of $X$ to join $x_i$ to $x_{i+1}$ by a continuum of diameter at most
$$ Ld(x_i, x_{i+1}) \leq \frac{1}{2}\dist(P,p). $$
The union of these continua forms a  continuum joining $x$ to $y$ inside
$$ A(p, r/2\mu, 3\mu r),$$
which proves the lemma with $\lambda = 2\mu$.
\end{proof}

\begin{proof}[Proof of Lemma \ref{lem:ALC}]
Let $\mathcal{C}$ be a collection of metric spaces satisfying the hypotheses of Lemma \ref{lem:ALC}. By Lemma \ref{lem:lincon}, we immediately have a constant $L\geq 1$ such that each $X\in \mathcal{C}$ is linearly connected with constant $L$. Note that this immediately implies that any pointed Gromov-Hausdorff limit of rescaled, pointed elements of $\mathcal{C}$ is also linearly connected.

Therefore, to show that all $X\in \mathcal{C}$ are uniformly ALC, we need only verify the existence of a constant $\mu \geq 1$ such that each $X\in\mathcal{C}$ satisfies the hypotheses of Lemma \ref{lem:discreteLLC} with constant $\mu$.

Suppose that there is no such constant $\mu$. Then for all $k\in \mathbb{N}$, there is a space $X_k\in \mathcal{C}$, a point $p_k\in X_k$, a radius $r_k>0$, and points $x_k, y_k \in A(p_k, r_k, 2r_k)$, such that there is no finite set 
$$P= \{z_0=x_k, z_1,  \dots, z_{m-1}, z_m=y_k\},$$
contained in $A(p_k, \frac{1}{k}r_k, 2kr_k)$ and satisfying
\begin{equation}\label{eqn:discretechain}
d(z_i, z_{i+1}) \leq \frac{1}{2L} \dist(P, p_k) \text{ for all } i\in\{0,m-1\}
\end{equation}

Consider the uniformly doubling sequence of pointed metric spaces $\{(Y_k,p_k):=(r_k^{-1}X, p_k)\}$. Let $(Y_\infty, p_\infty)$ be a pointed Gromov-Hausdorff limit of a subsequence of this sequence, which for convenience we continue to label with the index $k$.

For all $\epsilon>0$, there exists $K\in \mathbb{N}$ such that, for all $k\geq K$, there are $\epsilon$-isometries
\begin{equation}\label{eqn:isom1}
f_k:B_{Y_k}\left(p_k, \frac{1}{\epsilon}\right)\rightarrow Y_\infty \text{ and } g_k:B_{Y_\infty}\left(p_\infty, \frac{1}{\epsilon}\right)\rightarrow Y_k
\end{equation}
such that
\begin{equation}\label{eqn:isom2}
d_{Y_\infty}(f_k(g_k(x)), x) \leq \epsilon \text{ for all } x\in B_{Y_\infty}(p_\infty, 1/2\epsilon)
\end{equation}
and
\begin{equation}\label{eqn:isom3}
 d_{Y_\infty}(f_k(p_k), p_\infty) \leq \epsilon \text{ and } d_{Y_k}(g_k(p_\infty), p_k) \leq \epsilon
\end{equation}

For all $k$ sufficiently large, the points $f_k(x_k)$ and $f_k(y_k)$ all lie in $B_{Y_\infty}(p_\infty, 3)$. By passing to a further subsequence if necessary, we may therefore also assume that $f_k(x_k)$ and $f_k(y_k)$ converge to points $x_\infty$ and $y_\infty$, respectively, in $\overline{B}(p_\infty, 3) \in Y_\infty$.

The space $Y_\infty$ is a pointed Gromov-Hausdorff limit of pointed rescalings of elements of $\mathcal{C}$. Hence, by assumption, it is connected with no cut points. Furthermore, as remarked at the beginning of this proof, it is linearly connected. A simple connectedness argument then yields a compact connected set $C_\infty \subset Y_\infty\setminus \{p_\infty\}$ containing both $x_\infty$ and $y_\infty$. (Indeed, the set of $y\in Y_\infty\setminus \{p_\infty\}$ that can be joined to $x_\infty$ by such a continuum is open in $Y_\infty\setminus \{p_\infty\}$, as is its complement, by the linear connectedness of $Y_\infty$.)

For some choice of $0<r<1<R<\infty$, $C_\infty$ must lie in $A(p_\infty, r, R)$. Let 
$$\epsilon = \min(r/100L, 1/100R).$$
There is a finite set $P_\infty = \{x_\infty^0, x_\infty^1, \dots, x_\infty^n\} \subseteq C_\infty$ such that $x_\infty^0=x_\infty$, $x_\infty^n=y_\infty$, and
$$ d(x_\infty^i, x_\infty^{i+1}) \leq \epsilon \text{ for each } i\in \{0, \dots, n-1\}.$$

Choose $k>2\epsilon^{-1}$ sufficiently large so that there are $\epsilon$-isometries $f_k$ and $g_k$ as in \eqref{eqn:isom1}, \eqref{eqn:isom2}, and \eqref{eqn:isom3}. We can also assume that $k$ is large enough so that
$$ d_{Y_\infty}(f_k(x_k), x_\infty) \leq \epsilon \text{ and } d_{Y_\infty}(f_k(y_k), y_\infty) \leq \epsilon.$$

Let $Q_k\subset Y_k$ denote the set
$$ Q_k = \{ z_0 = x_k, z_1 = g_k(x^0_\infty), z_2 = g_k(x^1_\infty), \dots, z_n = g_k(x^n_\infty), z_{n+1} = y_k \}.$$
Because $g_k$ is an $\epsilon$-isometry into $Y_k = r_k^{-1} X_k$, and because of equation \eqref{eqn:isom2}, we see that
$$ d_{X_k}(z_i, z_{i+1}) \leq 3\epsilon r_k \text{ for each } i\in \{0, \dots, n\}.$$
Furthermore,
$$\dist_{X_k}(Q_k, p_k) \geq r r_k  - 2\epsilon r_k \geq \frac{1}{2}r r_k.$$
and
$$ Q_k \subset B_{X_k}(p_k, (R+ 2\epsilon)r_k) \subset B_{X_k}(p_k, 2R r_k).$$

Thus, $Q_k = \{z_0=x_k, z_1, \dots, z_{n+1}=y_k\}$ is contained in
$$A_{X_k}(p_k, \frac{1}{2}r r_k, 2R r_k) \subset A_{X_k}(p_k, \frac{1}{k}r_k, 2kr_k),$$
where this last inclusion follows from our assumption that 
$$k\geq 2\epsilon^{-1} \geq \max\{100/r, 100R\}.$$

In addition,
$$ d_{X_k}(z_i, z_{i+1}) \leq 3\epsilon r_k  \leq \frac{1}{2L} \frac{1}{2}r r_k \leq \frac{1}{2L} \dist_{X_k}(Q_k, p_k) \text{ for each } i\in \{0, \dots, n\}.$$

Hence, $Q_k$ is contained in $A(p_k, r_k/k, 2kr_k)$ and satisfies \eqref{eqn:discretechain}. This is a contradiction.
\end{proof}
\end{appendix}

\bibliography{rectifiable_planes}
\bibliographystyle{alpha}
\end{document}